\newtheorem{theorem}{Theorem}[section]
\newtheorem{corollary}[theorem]{Corollary}
\newtheorem{definition}[theorem]{Definition}
\newtheorem{lemma}[theorem]{Lemma}
\newtheorem{proposition}[theorem]{Proposition}
\theoremstyle{remark}
\newtheorem{remark}[theorem]{Remark}
\numberwithin{equation}{section}
\newcommand{\ep}{\varepsilon}
\newcommand{\dist}{ {\mathrm{dist}} }
\newcommand{\diff}{ {\mathrm{d}} }
\newcommand{\trD}{ {\mathrm{tr_D}} }
\begin{document}
\title[Wave Equations with Robin--Acoustic Perturbation]{Attractors for Damped Semilinear Wave Equations with a Robin--Acoustic Boundary Perturbation}
\author[J. L. Shomberg]{Joseph L. Shomberg}
\subjclass[2000]{Primary: 35B25, 35B41; Secondary: 35L20, 35Q72.}
\keywords{Damped semilinear wave equation, acoustic boundary condition, Robin boundary condition, singular perturbation, global attractor, upper-semicontinuity, exponential attractor.}
\address{Department of Mathematics and Computer Science, Providence College, Providence, Rhode Island 02918, USA, \\ {\tt{jshomber@providence.edu}}}
\date\today

\begin{abstract}
Under consideration is the damped semilinear wave equation
\[
u_{tt}+u_t-\Delta u + u + f(u)=0
\]
on a bounded domain $\Omega$ in $\mathbb{R}^3$ with a perturbation parameter $\ep>0$ occurring in an acoustic boundary condition, limiting ($\ep=0$) to a Robin boundary condition. 
With minimal assumptions on the nonlinear term $f$, the existence and uniqueness of global weak solutions is shown for each $\ep\in[0,1]$. 
Also, the existence of a family of global attractors is shown to exist. 
After proving a general result concerning the upper-semicontinuity of a one-parameter family of sets, the result is applied to the family of global attractors. Because of the complicated boundary conditions for the perturbed problem, fractional powers of the Laplacian are not well-defined; moreover, because of the restrictive growth assumptions on $f$, the family of global attractors is obtained from the asymptotic compactness method developed by J. Ball for generalized semiflows.
With more relaxed assumptions on the nonlinear term $f$, we are able to show the global attractors possess optimal regularity and prove the existence of an exponential attractor, for each $\ep\in[0,1].$
This result insures that the corresponding global attractor inherits finite (fractal) dimension, however, the dimension is {\em{not}} necessarily uniform in $\ep$. 
\end{abstract}

\maketitle

\section{Introduction to the model problems}  \label{s:intro}

Let $\Omega\subset\mathbb{R}^3$ be a bounded domain with smooth boundary $\Gamma$. The equation under consideration in the unknown $u=u(t,x)$ is the semilinear damped wave equation 
\begin{equation}\label{damped-wave-equation}
u_{tt} + u_t-\Delta u + u + f(u) = 0 \quad \text{in} \quad (0,\infty)\times\Omega
\end{equation}
with the initial conditions
\begin{equation}  \label{Robin-initial-conditions}
u(0,\cdot) = u_0, ~u_t(0,\cdot) = u_1 \quad \text{on} \quad \{0\}\times \Omega
\end{equation}
and the homogeneous Robin boundary condition
\begin{equation}  \label{Robin-boundary}
\partial_{\bf{n}}u + u = 0 \quad \text{on} \quad (0,\infty)\times\Gamma,
\end{equation}
where $\bf{n}$ is the outward pointing unit vector normal to the surface $\Gamma$ at $x$, and $\partial_{\bf{n}}u$ denotes the normal derivative of $u$, $\nabla u\cdot \bf{n}$. Assume the nonlinear term $f\in C^1(\mathbb{R})$ satisfies the growth condition
\begin{equation}  \label{f-assumption-1}
|f'(s)|\leq \ell(1+s^2)
\end{equation}
for some $\ell\geq 0$, and the sign condition 
\begin{equation}\label{f-assumption-2}
\liminf_{|s|\rightarrow\infty}\frac{f(s)}{s} > -1
\end{equation}
Collectively, denote the IBVP (\ref{damped-wave-equation})-(\ref{Robin-boundary}), with (\ref{f-assumption-1})-(\ref{f-assumption-2}), or (\ref{f-assumption-2}), (\ref{f-reg-ass-2})-(\ref{f-reg-ass-3}) as Problem (R).

Also of interest is the following ``relaxation'' of the Robin boundary condition, where, for $\delta=\delta(t,x)$ and $\ep>0$, the acoustic perturbation boundary condition is employed
\begin{equation}  \label{acoustic-boundary}
\left\{ \begin{array}{ll}
\delta_{tt} + \ep[\delta_t + \delta + g(\delta)] = -u_t & \quad \text{on} \quad (0,\infty)\times\Gamma \\
\delta_t = \partial_{\bf{n}}u & \quad \text{on} \quad (0,\infty)\times\Gamma,
\end{array}\right.
\end{equation}
supplemented with the additional initial conditions
\begin{equation}\label{acoustic-initial-conditions}
\ep\delta(0,\cdot) = \ep\delta_0, \quad \delta_t(0,\cdot) = \delta_1 \quad \text{on} \quad \{0\}\times \Gamma.
\end{equation}
During the discussion on well-posedness and the dissipative qualitative behavior of   the perturbation problem, we assume $g\in C^1(\mathbb{R})$ satisfies the growth condition
\begin{equation}\label{g-assumption-1}
|g'(s)|\leq \rho
\end{equation}
for some $\rho\geq 0$, and the sign condition
\begin{equation}\label{g-assumption-2}
\liminf_{|s|\rightarrow\infty}\frac{g(s)}{s}>-1.
\end{equation}
Denote the IBVP (\ref{damped-wave-equation})-(\ref{Robin-initial-conditions}), (\ref{acoustic-boundary})-(\ref{acoustic-initial-conditions}) with (\ref{f-assumption-1})-(\ref{f-assumption-2}) or (\ref{f-assumption-2}), (\ref{f-reg-ass-2})-(\ref{f-reg-ass-3}) (below), and (\ref{g-assumption-1})-(\ref{g-assumption-2}), or (\ref{g-reg-ass-1}) (below) by Problem (A). 
During portions of this article, we will make different assumptions on the nonlinear terms $f$ and $g$.
When we discuss global attractors for Problem (R) and Problem (A), we will take 
\begin{equation}  \label{g-reg-ass-1}
g\equiv0.
\end{equation}
Also, when we discuss regularity results or exponential attractors, we will need to require more from the nonlinear term $f$; in that case, we assume the growth condition
\begin{equation}\label{f-reg-ass-2}
|f''(s)|\leq \ell_1(1+|s|)
\end{equation}
and the bound
\begin{equation}\label{f-reg-ass-3}
f'(s)\geq -\ell_2,
\end{equation}
holds for some $\ell_1,\ell_2\geq 0$, in addition to (\ref{f-assumption-2}).

The idea of ``relaxing'' a Robin boundary condition into an acoustic boundary condition comes from \cite{Beale&Rosencrans74}. 
In the singular case of the acoustic boundary condition; that is, when $\ep=0$ in (\ref{acoustic-boundary}), the following Robin boundary condition in $u_t$ is obtained,
\begin{equation}  \label{Robin-boundary-ut}
\frac{\partial^2 u}{\partial t \partial{\bf{n}}} + u_t = 0 \quad \text{on} \quad (0,\infty)\times\Gamma.
\end{equation}
The condition in equation (\ref{Robin-boundary-ut}) can be expressed as the system on $(0,\infty)\times\Gamma$,
\begin{equation}  \label{key-2}
\left\{ \begin{array}{l} \delta_{tt} = -u_t \\ \delta_t = \partial_{\bf{n}}u. \end{array} \right.
\end{equation}

In general, the damped wave equation (\ref{damped-wave-equation}) has several applications to physics, such as relativistic quantum mechanics (cf. e.g.\cite{Babin&Vishik92,Temam88}).
Problem (R) may be used to govern a thermoelastic medium with (\ref{Robin-boundary}) indicating that the amount of heat is proportional to its flux at the boundary. 
For this reason, the Robin boundary condition is also called a convection surface condition as well.
One way to motivate the Robin boundary condition may be as a static condition based on the dynamic boundary condition,
\begin{equation}  \label{DBC}
\partial_{\bf{n}}u+u+u_t=0.
\end{equation}
The dynamic term $u_t$ that now appears can be used to account for frictional forces that produce damping along the boundary on the physical domain.
The condition (\ref{DBC}) also has motivations coming from thermodynamics and also appears in connection with the Wentzell boundary condition (cf. \cite{Gal12-2} and the references therein).

Problem (A) describes a gas experiencing irrotational forces from a rest state in a domain $\Omega$. 
The surface $\Gamma$ acts as a locally reacting spring-like mechanism in response to excess pressure in $\Omega$. 
The unknown $\delta =\delta (t,x)$ represents the {\em{inward}} ``displacement'' of the boundary $\Gamma$ reacting to a pressure described by $-u_{t}$. 
The first equation (\ref{acoustic-boundary})$_{1}$ describes the spring-like effect in which $\Gamma $ (and $\delta $) interacts with $-u_{t}$, and the second equation (\ref{acoustic-boundary})$_{2}$ is the continuity condition: velocity of the boundary displacement $\delta $ agrees with the normal derivative of $u$. 
The presence of the term $g$ indicates nonlinear effects in the damped oscillations occurring on the surface. 
Together, (\ref{acoustic-boundary}) describe $\Gamma $ as a so-called locally reactive surface. 
In applications the unknown $u$ may be taken as a velocity potential of some fluid or gas in $\Omega $ that was disturbed from its equilibrium. 
The acoustic boundary condition was rigorously described by Beale and Rosencrans in \cite{Beale76,Beale&Rosencrans74}. 
Various recent sources investigate the wave equation equipped with acoustic boundary conditions, \cite{CFL01,GGG03,Mugnolo10,Vicente09}. 
However, more recently, it has been introduced as a dynamic boundary condition for problems that study the asymptotic behavior of weakly damped wave equations, see \cite{Frigeri10}.

Because the Laplacian operator is compact, self-adjoint, and strictly positive with Robin boundary conditions in (\ref{Robin-boundary}), the Laplacian admits a countable collection of eigenfunctions and a corresponding non-decreasing set of positive eigenvalues. 
Also, one is able to define fractional powers of the Laplacian. 
Hence, the existence and uniqueness of a (local) weak solution can be sought through a Faedo-Galerkin approximation procedure by projecting the problem onto a subspace spanned by a finite set of eigenfunctions. 
Through the use of {\em{a priori}} estimates and compactness methods, the weak solution is usually obtained as a subsequence of a weakly converging sequence (see \cite{Lions69,Lions&Magenes72,Temam88} for a more detailed description of Faedo-Galerkin methods and their application to nonlinear partial differential equations). 
However, because of the nature of Problem (R) (in fact weak solutions are mild solutions), semigroups methods are utilized to obtain a local weak solution to Problem (R). 
Well-defined fractional powers of the Laplacian are usually utilized to decompose the solution operator into two operators: a part that decays to zero and a compact part. 
In either problem though, the growth assumptions on the nonlinear term $f(u)$ are not sufficient to apply such a decomposition. 
Thus, the existence of the global attractor is obtained through generalized semiflow methods contributed by Ball in \cite{Ball00,Ball04}. 
One must show that the solution operators are weakly continuous and asymptotically compact. 
On the other hand, not even fractional powers of the Laplacian are well-defined with acoustic boundary conditions. 
This means the solutions of Problem (A) cannot be obtained via a spectral basis, so local weak solutions to Problem (A) are obtained with semigroup methods. 
Both problems will be formulated in an abstract form and posed as an equation in a Banach space, containing a linear unbounded operator, which is the infinitesimal generator of a strongly continuous semigroup of contractions on the Banach space, and containing a locally Lipschitz nonlinear part. 
The global attractors for Problem (A) are also obtained through the weak continuity and asymptotic compactness of the the solution operators (again, cf. \cite{Ball00,Ball04}). 
Actually, the $\ep=1$ case of Problem (A) has already been studied in \cite{Frigeri10}, and it is that work, along with \cite{Beale&Rosencrans74}, that has brought the current one into view. 

The stability of solutions to partial differential equations under singular perturbations has been a topic undergoing rapid and strong growth; in particular, the continuity of attracting sets is a topic that has grown significantly. 
Only some of the results are mentioned below. 
An upper-semicontinuous family of global attractors for wave equations obtained from a perturbation of hyperbolic-relaxation type appears in \cite{Hale&Raugel88}. 
The problem is of the type
\[
\ep u_{tt}+u_t-\Delta u+\phi(u)=0,
\]
where $\ep\in [0,1]$. 
The equation possesses Dirichlet boundary conditions, and $\phi\in C^2(\mathbb{R})$ satisfies the growth assumption,
\[
\phi''(s)\leq C(1+|s|)
\]
for some $C>0$. 
The global attractor for the parabolic problem, $\mathcal{A}_0\subset H^2(\Omega)\cap H^1_0(\Omega)$, is ``lifted'' into the phase space for the hyperbolic problems, $X=H^1_0(\Omega)\times L^2(\Omega)$, by defining,
\begin{equation}  \label{HR-lift}
\mathcal{LA}_0:=\{(u,v)\in X:u\in\mathcal{A}_0,~v=f-g(u)+\Delta u\}.
\end{equation}
The family of sets in $X$ is defined by,
\begin{equation}  \label{gfam-1}
\mathbb{A}_\ep:=\left\{ \begin{array}{ll} \mathcal{L}\mathcal{A}_0 & \text{for}~\ep=0 \\ \mathcal{A}_\ep & \text{for}~\ep\in(0,1], \end{array}\right.
\end{equation}
where $\mathcal{A}_\ep\subset X$ denotes the global attractors for the hyperbolic-relaxation problem. 
The main result in \cite{Hale&Raugel88} is the upper-semicontinuity of the family of sets $\mathbb{A}_\ep$ in $X$; i.e.,
\begin{equation}  \label{4usc}
\lim_{\ep\rightarrow 0}\dist_X(\mathbb{A}_\ep,\mathbb{A}_0):= \lim_{\ep\rightarrow 0}\sup_{a\in\mathbb{A}_\ep}\inf_{b\in\mathbb{A}_0}\|a-b\|_X=0.
\end{equation}
To obtain this result, we will replace the initial conditions (\ref{acoustic-initial-conditions}) with the following 
\begin{equation}  \label{acoustic-initial-conditions2}
\ep\delta(0,\cdot) = \ep\delta_0, \quad \delta_t(0,\cdot) = \ep\delta_1-(1-\ep)u_0 \quad \text{on} \quad \{0\}\times \Gamma.
\end{equation}
{\em{The result in (\ref{4usc}) insures every that for every Robin-type problem, Problem (R), there is an acoustic relaxation, Problem (A), in which (\ref{4usc}) holds.}}

\begin{remark}  \label{key}
To motivate the choice of initial conditions chosen in (\ref{acoustic-initial-conditions2}), consider the following. The formal limit of (\ref{acoustic-boundary}) obtained in (\ref{key-2}) leads to the equation on $\Gamma$,
\[
\partial_{\mathbf{n}}u(t) = -u(t) + \phi,
\]
where $\phi=\phi(x)$ is an arbitrary function obtained from integration with respect to $t$. So at $t=0$ we find,
\[
\delta_t(0)=-u(0)+\phi.
\]
Thus, when (\ref{acoustic-initial-conditions2}) holds, we are guaranteed $\phi\equiv0.$ Moreover, this means the formal restriction ``$\ep=0$'' of Problem (A) does in fact coincide with Problem (R). 
\end{remark}

Since this result appeared, an upper-continuous family of global attractors for the Cahn-Hilliard equations has been found \cite{Zheng&Milani05}. 
Robust families of exponential attractors (that is, both upper- and lower-semicontinuous with explicit control over semidistances in terms of the perturbation parameter) of the type reported in \cite{GGMP05} have successfully been demonstrated to exist in numerous applications spanning partial differential equations of evolution: the Cahn-Hilliard equations with a hyperbolic-relaxation perturbation \cite{GGMP05-CH3D,GGMP05-CH1D}, applications with a perturbation appearing in a memory kernel have been treated for reaction diffusion equations, Cahn-Hilliard equations, phase-field equations, wave equations, beam equations, and numerous others \cite{GMPZ10}. 
Recently, the existence of an upper-semicontinuous family of global attractors for a reaction-diffusion equation with a singular perturbation of hyperbolic relaxation type and dynamic boundary conditions has appeared in \cite{Gal&Shomberg15}.
Robust families of exponential attractors have also been constructed for equations where the perturbation parameter appears in the boundary conditions. 
Many of these applications are to the Cahn-Hilliard equations and to phase-field equations \cite{Gal08,GGM08-2,Miranville&Zelik02}.
Also, continuous families of inertial manifolds have been constructed for wave equations \cite{Mora&Morales89-2}, Cahn-Hilliard equations \cite{BGM10}, and more recently, for phase-field equations \cite{Bonfoh11}. 
Finally, for generalized semiflows and for trajectory dynamical systems (dynamical systems where well-possedness of the PDE---uniqueness of the solution, in particular---is not guaranteed), some continuity properties of global attractors have been found for the Navier-Stokes equations \cite{Ball00}, the Cahn-Hilliard equations \cite{Segatti06}, and for wave equations \cite{Ball04,Zelik04}. 

The thrust behind robustness is typically an estimate of the form, 
\begin{equation}  \label{robust-intro}
\|S_\varepsilon(t)x-\mathcal{L}S_0(t)\Pi x\|_{X_\varepsilon}\leq C\varepsilon,
\end{equation}
where $x\in X_\varepsilon$, $S_\varepsilon(t)$ and $S_0(t)$ are semigroups generated by the solutions of the perturbed problem and the limit problem, respectively, $\Pi$ denotes a projection from $X_\varepsilon$ onto $X_0$ and $\mathcal{L}$ is a ``lift'' (such as (\ref{HR-lift})) from $X_0$ into $X_\varepsilon$.
The estimate (\ref{robust-intro}) means we can approximate the limit problem with the perturbation with control explicitly written in terms of the perturbation parameter. 
Usually, such control is only exhibited on compact time intervals. 
It is important to realize that the lift associated with a hyperbolic-relaxation problem, for example, requires a certain degree of regularity from the limit problem. 
In particular, \cite{Gal&Shomberg15,Hale&Raugel88} rely on (\ref{HR-lift}); so one needs $\mathcal{A}_0\subset H^2$ in order for $\mathcal{LA}_0\subset L^2$ to be well-defined.
For the above model problem, the perturbation parameter appears in a (dynamic) boundary condition. 
The perturbation is singular in nature, however, additional regularity from the global attractor $\mathcal{A}_0$ is not required in order for the lift to be well-defined. 
In place of this, we will rely on the (local) Lipschitz continuity of the corresponding semiflows; both the limit problem ($\ep=0$) and the perturbation problem ($\ep>0$) exhibit Lipschitz continuous solution operators.
This property allows us to prove an estimate of the form (\ref{robust-intro}), and additionally, the upper-semicontinuity of the family of global attractors, without additional regularity from $\mathcal{A}_0$. 
This fact is the key reason behind the motivation for using the first-order growth condition (\ref{f-assumption-1}); no further regularity is need to obtain an upper-continuous family of global attractors, and hence, a more general result is obtained. 

Unlike global attractors described above, exponential attractors (sometimes called, inertial sets) are positively invariant sets possessing finite fractal dimension that attract bounded subsets of the phase space exponentially fast. 
It can readily be seen that when both a global attractor $\mathcal{A}$ and an exponential attractor $\mathcal{M}$ exist, then $\mathcal{A}\subseteq \mathcal{M}$, and so the global attractor is also finite dimensional. 
When we turn our attention to proving the existence of exponential attractors, certain higher-order dissipative estimates are required. 
In the case for Problem (A), the estimates cannot be obtained along the lines of multiplication by fractional powers of the Laplacian; as we have already described, we need to resort to other methods.
In particular, we will apply $H^2$-elliptic regularity methods as in \cite{Pata&Zelik06}. 
Here, the main idea is to differentiate the equations with respect to time $t$ to obtain uniform estimates for the new equations. 
This strategy has recently received a lot of attention.
Some successes include dealing with a damped wave equation with acoustic boundary conditions \cite{Frigeri10} and a wave equation with a nonlinear dynamic boundary condition \cite{CEL02,CEL04-2,CEL04}.
Also, there is the hyperbolic relaxation of a Cahn-Hilliard equation with dynamic boundary conditions \cite{CGG11,Gal&Grasselli12}.
Additionally, this approach was also taken in \cite{Gal&Shomberg15}.
The drawback from using this approach comes in to difficulty in finding appropriate estimates that are {\em{uniform}} in the perturbation parameter $\ep.$
Indeed, this was the case in \cite{Gal&Shomberg15}.
There, the authors we able to find an upper-semicontinuous family of global attractors and a family of exponential attractors. 
It turned out that a certain higher-order dissipative estimate depends on $\ep$ in a crucial way, and consequently, the robustness/H\"older continuity of the family of exponential attractors cannot (yet) be obtained.
Furthermore, as it turns out, the global attractors found in \cite{Gal&Shomberg15} have finite (fractal) dimension, although the dimension is not necessarily independent of $\ep.$ 
It appears that similar difficulties persist with the model problem examined here.

The main results in this paper are:
\begin{itemize}
\item An upper-semicontinuity result for a {\em{generic}} family of sets for a family of semiflows, where in particular, the limit ($\ep=0$) semigroup of solution operators is locally Lipschitz continuous, uniformly in time on compact intervals. 
This result largely rests on the fact that the difference between two trajectories emanating from the same initial data, on compact time intervals, can be estimated in the phase space by a constant times $\sqrt{\ep}$.

\item Problem (R) and Problem (A) admit a family of global attractors $\{\mathcal{A}_\ep\}_{\ep\in[0,1]}$, where each is bounded, uniformly in $\ep$, in the respective phase space.
The rate of attraction of bounded sets to $\mathcal{A}_\ep$ approaches a constant as $\ep$ approaches zero. 

\item The generic semicontinuity result is applied to the family of global attractors $\{\mathcal{A}_\ep\}_{\ep\in[0,1]}$. 
Because of the Lipschitz continuity of the semiflow generated by Problem (R), $S_0$, no further regularity is required from the attractor $\mathcal{A}_0$ to insure $\mathcal{LA}_0$ is well-defined in the phase space of Problem (A).

\item Under more restrictive assumptions on the nonlinear terms $f$ and $g$, the global attractors are shown to possess optimal regularity by being bounded in a compact subset of the phase space; however, this bound is {\em{no longer}} independent of $\ep$.

\item There exists a family of exponential attractors $\{\mathcal{M}_\ep\}_{\ep\in[0,1]}$, admitted by the semiflows associated with for Problem (R) and Problem (A).
Since $\mathcal{A}_\ep\subset\mathcal{M}_\ep$ for each $\ep\in[0,1]$, this result insures the global attractors inherit  finite (fractal) dimension.
However, we cannot conclude that dimension is uniform in $\ep$ (this result remains open).
\end{itemize}

\subsection{Notation and conventions}

We take the opportunity here to introduce some notations and conventions that are used throughout the paper. 
We denote by $\Vert \cdot \Vert $, $\Vert \cdot \Vert _{k}$, the norms in $L^{2}(\Omega )$, $H^{k}(\Omega )$, respectively. 
We use the notation $\langle \cdot ,\cdot \rangle $ and $\langle \cdot ,\cdot \rangle_{k}$ to denote the products on $L^{2}(\Omega )$ and $H^{k}(\Omega)$, respectively.
For the boundary terms, $\Vert \cdot \Vert _{L^{2}(\Gamma )}$ and $\langle
\cdot ,\cdot \rangle _{L^{2}(\Gamma )}$ denote the norm and, respectively,
product on $L^{2}(\Gamma )$. 
We will require the norm in $H^{k}(\Gamma )$, to be denoted by $\Vert \cdot \Vert _{H^{k}(\Gamma )}$, where $k\geq 1$. 
The $L^{p}(\Omega )$ norm, $p\in (0,\infty ]$, is denoted $|\cdot |_{p}$. 
The dual pairing between $H^{1}(\Omega )$ and the dual $H^{-1}(\Omega) := (H^{1}(\Omega ))^{\ast }$ is denoted by $(u,v)_{H^{-1}\times H^1}$. 
In many calculations, functional notation indicating dependence on the variable $t$ is dropped; for example, we will write $u$ in place of $u(t)$. 
Throughout the paper, $C>0$ will denote a \emph{generic} constant, while $Q:\mathbb{R}_{+}\rightarrow \mathbb{R}_{+}$ will denote a \emph{generic} increasing function. 
All these quantities, unless explicitly stated, are \emph{independent} of the perturbation parameter $\varepsilon$.
Further dependencies of these quantities will be specified on occurrence.
We will use $\|B\|_{W}:=\sup_{\Upsilon\in B}\|\Upsilon\|_W$ to denote the ``size'' of the subset $B$ in the Banach space $W$.
Later in the article, we will rely on the Laplace-Beltrami operator $-\Delta_\Gamma$ on the surface $\Gamma.$ This operator is positive definite and self-adjoint on $L^2(\Gamma)$ with domain $D(-\Delta_\Gamma)$.
The Sobolev spaces $H^s(\Gamma)$, for $s\in\mathbb{R}$, may be defined as $H^s(\Gamma)=D((-\Delta_\Gamma)^{s/2})$ when endowed with the norm whose square is given by, for all $u\in H^s(\Gamma)$,
\begin{equation}  \label{LB-norm}
\|u\|^2_{H^s(\Gamma)} := \|u\|^2_{L^2(\Gamma)} + \left\|(-\Delta_\Gamma)^{s/2}u\right\|^2_{L^2(\Gamma)}.
\end{equation}

As for the plan of the paper, in Section 2 we review the important results concerning the limit ($\ep=0$) Problem (R), and in Section 3 we discuss the relevant results concerning the perturbation Problem (A). 
Some important remarks describing several instances of how Problem (A) depends of the perturbation parameter $\ep>0$ are given throughout Section 3. 
The Section 4 contains a new abstract upper-semicontinuity result that is then tailored specifically for the model problem under consideration. 
The final Section 5 summarizes our findings.

\section{Attractors for Problem (R), the $\ep=0$ case}  \label{s:Robin}

It is shown that Problem (R) possesses unique global weak solutions in a suitable phase space, and the solutions depend continuously on the initial data. 
Under suitable assumptions on $f$, we also establish the existence of global strong solutions.
The solutions generate a Lipschitz continuous semiflow which admits a bounded, absorbing, positively invariant set in the phase space. 
The existence of a global attractor is achieved by using Ball's asymptotic compactness method when we set $g\equiv 0$, and, with suitable assumptions on $f$, the existence of an exponential attractor follows from the recent work \cite{Gal&Shomberg15}.

\subsection{The functional framework for Problem (R)}

The phase space and the abstract Cauchy problem are formulated. 
In addition, the Laplacian, $-\Delta$, with the Robin boundary conditions described by (\ref{Robin-boundary}) is briefly discussed. 
The spectral basis is used in finding a Poincar\'e type inequality for Problem (R). 

Define
\[
\mathcal{H}_0:=H^1(\Omega) \times L^2(\Omega).
\]
The space $\mathcal{H}_0$ is Hilbert when endowed with the norm whose square is given by, for $\varphi=(u,v)\in\mathcal{H}_0$,
\begin{align*}
\|\varphi\|^2_{\mathcal{H}_0} &:= \|u\|^2_1 + \|u\|^2_{L^2(\Gamma)} + \|v\|^2 \\
& = \left( \|\nabla u\|^2 + \|u\|^2 \right) + \|u\|^2_{L^2(\Gamma)} + \|v\|^2.
\end{align*}
Recall that if $u\in H^1(\Omega)$, then $u_{\mid\Gamma} \in H^{1/2}(\Gamma)\hookrightarrow L^2(\Gamma)$ by the trace theorem, so $\|\cdot\|_{\mathcal{H}_0}$ is well-defined.
The embedding in the previous assertion follows from the embedding theorem on the {\em{two}}-dimensional surface $\Gamma$ (cf. \cite[Theorem 2.6]{Hebey99}).

As introduced in \cite{CEL02,Wu&Zheng06}, let $\Delta_{\mathrm{R}}:L^{2}(\Omega )\rightarrow L^{2}(\Omega )$ be the ``Robin-Laplacian'' operator with domain 
\begin{equation*}
D(\Delta_{\mathrm{R}}) = \{u\in H^2(\Omega ): \partial _{\bf{n}}u + u = 0 \ \text{on} \ \Gamma \}.
\end{equation*}
The Robin-Laplace operator $-\Delta_{\mathrm{R}}$ is self-adjoint and strictly positive; indeed, for all $u,v\in H^1(\Omega)$,
\[
(-\Delta_{\rm{R}} u,v)_{H^{-1}\times H^1} = (u,-\Delta_{\rm{R}} v)_{H^{-1}\times H^1},
\]
and 
\[
(-\Delta_{\rm{R}} u,u)_{H^{-1}\times H^1} = \|\nabla u\|^2 + \|u\|^2_{L^2(\Gamma)} \geq 0.
\]
By the spectral theorem, the operator $-\Delta_{\mathrm{R}}$ admits a system of eigenfunctions $(\omega_j)_{j=1}^\infty$ that is a complete orthonormal system in $L^2(\Omega)$, and a corresponding sequence of eigenvalues $(\lambda_j)_{j=1}^\infty$ which can be ordered into a nondecreasing sequence, such that $\lim_{j\rightarrow \infty}\lambda_j=\infty$. 
Using the Fourier series representation of $u$ the Poincar\'e inequality is computed as, for all $u\in H^1(\Omega)$,
\begin{equation}  \label{our-Poincare-0}
\|u\|\leq \frac{1}{\sqrt{\lambda_1}}\left( \|\nabla u\|^2 + \|u\|^2_{L^2(\Gamma)} \right)^{1/2},
\end{equation}
where $\lambda_1>0$ is the first eigenvalue of the Laplacian with Robin boundary conditions (\ref{Robin-boundary}). 
The Robin-Laplacian is extended to a continuous
operator $\Delta_{\mathrm{R}}:H^{1}(\Omega )\rightarrow \left( H^{1}(\Omega)\right) ^{\ast }$, defined by, for all $v\in H^{1}(\Omega )$, 
\begin{equation*}
(-\Delta_{\mathrm{R}}u,v)_{H^{-1}\times H^1}=\langle \nabla u,\nabla v\rangle +\langle u,v\rangle _{L^{2}(\Gamma )}.
\end{equation*}
We mention it is well-known that the Dirichlet trace map $\trD:C^\infty({\overline{\Omega}})\rightarrow C^\infty(\Gamma)$, defined by $\trD(u):= u_{\mid\Gamma}$, extends to a linear continuous operator $\trD:H^r(\Omega)\rightarrow H^{r-1/2}(\Gamma)$, for all $r>1/2$.
Hence, 
\[
u\in H^2(\Omega) \Longrightarrow u\in H^{3/2}(\Gamma) \quad\text{and}\quad\partial_{\bf{n}} u := \nabla u\cdot {\bf{n}} \in H^{1/2}(\Gamma).
\]
Thus, the equation $\partial_{\bf{n}}u= -u$ makes sense in the following distributional sense on $\Gamma$: for all $u_1\in D(\Delta_{\mathrm{R}})$ and $u_2\in H^1(\Omega)$,
\[
\int_\Omega (-\Delta_{\mathrm{R}})u_1 u_2 \diff x = \int_\Gamma u_1 u_2 \diff\sigma + \int_\Omega \nabla u_1 \nabla u_2\diff x.
\]

With $D(\Delta_{\rm{R}}),$ define the set
\begin{align*}
\mathcal{D}_0 & := D(\Delta_{\rm{R}}) \times H^1(\Omega) \\
& = \left\{ (u,v)\in H^2(\Omega) \times H^1(\Omega) : \partial_{\bf{n}}u + u = 0 \ \text{on} \ \Gamma \right\},
\end{align*}
and the linear unbounded operator $\mathrm{R}:D(\mathrm{R})\subset\mathcal{H}_0\rightarrow\mathcal{H}_0$, where $D(\mathrm{R})=\mathcal{D}_0$, by
\[
\mathrm{R}:=\begin{pmatrix} 0 & 1 \\ \Delta_{\rm{R}}-1 & -1 \end{pmatrix}.
\]
By the Lumer-Phillips theorem (cf. e.g. \cite[Theorem I.4.3]{Pazy83}) it is not hard to see that the operator $(\mathrm{R},D(\mathrm{R}))$ is an infinitesimal generator of a strongly continuous semigroup of contractions on $\mathcal{H}_0$, denoted $e^{\mathrm{R}t}$. The set $D(\mathrm{R})$ is dense in $\mathcal{H}_0=H^1(\Omega)\times L^2(\Omega)$, and $\mathrm{R}$ is dissipative since, for all $\varphi=(u,v)\in D(\mathrm{R})$,
\[
\langle \mathrm{R}\varphi,\varphi \rangle_{\mathcal{H}_0} = \langle v,u \rangle_1 + \langle v,u \rangle_{L^2(\Gamma)} + \langle \Delta_{\rm{R}} u-u-v,v \rangle = -\|v\|^2 \leq 0.
\]
The surjectivity requirement $(I+R)\varphi=\theta$ can be shown with the aid of the Lax-Milgram theorem; the elliptic system
\[
\left\{ \begin{array}{ll} u + v = \chi, & u\in D(\Delta_{\rm{R}}), \ v,\chi\in H^1(\Omega) \\ -\Delta_{\rm{R}} u + u = -\psi, & \psi \in L^2(\Omega). \end{array} \right.
\]
admits a unique weak solution $\varphi=(u,v)\in D(\mathrm{R})$ for any $\theta=(\chi,\psi)\in\mathcal{H}_0$. Define the map $\mathcal{F}:\mathcal{H}_0\rightarrow\mathcal{H}_0$ by 
\[
\mathcal{F}(\varphi):=\begin{pmatrix} 0 \\ -f(u) \end{pmatrix}
\]
for all $\varphi\in\mathcal{H}_0$. Since $f:H^1(\Omega)\rightarrow L^2(\Omega)$ is locally Lipschitz continuous \cite[cf. e.g. Theorem 2.7.13]{Zheng04}, it follows that the map $\mathcal{F}:\mathcal{H}_0\rightarrow\mathcal{H}_0$ is as well. Problem (R) may be put into the abstract form in $\mathcal{H}_0$
\begin{equation}  \label{abstract-Robin-problem}
\left\{
\begin{array}{l} \displaystyle\frac{\diff\varphi}{\diff t} = \mathrm{R}\varphi + \mathcal{F}(\varphi) \\ 
\varphi(0)=\varphi_0 \end{array}
\right.
\end{equation}
where $\varphi=\varphi(t)=(u(t),u_t(t))$ and $\varphi_0=(u_0,u_1)\in\mathcal{H}_0$; $v=u_t$ in the sense of distributions.

\begin{lemma}  \label{adjoint-r}
The adjoint of $\mathrm{R}$, denoted $\mathrm{R}^{\ast}$, is given by 
\begin{equation*}
\mathrm{R}^{\ast}:= -\begin{pmatrix} 0 & 1 \\ 
\Delta _{\mathrm{R}}-1 & 1 \end{pmatrix},
\end{equation*}
with domain 
\begin{equation*}
D(\mathrm{R}^{\ast }):=\{(\chi ,\psi )\in H^{2}(\Omega )\times
H^{1}(\Omega ):\partial _{\mathbf{n}}\chi +\chi =0 \ \text{on} \ \Gamma \}.
\end{equation*}
\end{lemma}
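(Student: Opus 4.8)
The plan is to use that $\mathrm{R}$, being the infinitesimal generator of a $C_0$-semigroup on $\mathcal{H}_0$, is densely defined and closed, so that $\mathrm{R}^{\ast}$ is a well-defined closed operator, and then to prove the stated identity by a double inclusion. The adjoint is characterized by the requirement that $(\chi,\psi)\in D(\mathrm{R}^{\ast})$ with $\mathrm{R}^{\ast}(\chi,\psi)=(\eta_1,\eta_2)$ if and only if $\langle\mathrm{R}\varphi,(\chi,\psi)\rangle_{\mathcal{H}_0}=\langle\varphi,(\eta_1,\eta_2)\rangle_{\mathcal{H}_0}$ for every $\varphi=(u,v)\in D(\mathrm{R})$, where $\langle\cdot,\cdot\rangle_{\mathcal{H}_0}$ is the inner product induced by $\|\cdot\|_{\mathcal{H}_0}$ and $\mathrm{R}\varphi=(v,\Delta_{\mathrm{R}}u-u-v)$.

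For the inclusion $D(\Delta_{\mathrm{R}})\times H^1(\Omega)\subseteq D(\mathrm{R}^{\ast})$ I would fix $(\chi,\psi)\in D(\Delta_{\mathrm{R}})\times H^1(\Omega)$ and expand $\langle\mathrm{R}\varphi,(\chi,\psi)\rangle_{\mathcal{H}_0}$ directly. The $u$-dependent terms combine, via the extended weak identity $(-\Delta_{\mathrm{R}}u,\psi)_{H^{-1}\times H^1}=\langle\nabla u,\nabla\psi\rangle+\langle u,\psi\rangle_{L^2(\Gamma)}$, valid since $\psi\in H^1(\Omega)$, to reproduce the $\mathcal{H}_0$-pairing of $u$ with $-\psi$; the $v$-dependent terms combine, using $\chi\in D(\Delta_{\mathrm{R}})$ and the same identity read with the roles reversed, to reproduce the $L^2$-pairing of $v$ with $-\Delta_{\mathrm{R}}\chi+\chi-\psi$. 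This simultaneously places $(\chi,\psi)$ in $D(\mathrm{R}^{\ast})$ and yields the stated formula $\mathrm{R}^{\ast}(\chi,\psi)=(-\psi,\,-\Delta_{\mathrm{R}}\chi+\chi-\psi)$.

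The reverse inclusion is the substance of the lemma. Given $(\chi,\psi)\in D(\mathrm{R}^{\ast})$ with image $(\eta_1,\eta_2)\in\mathcal{H}_0$, I would exploit that $\varphi=(u,v)$ ranges over the product $D(\Delta_{\mathrm{R}})\times H^1(\Omega)$ and test the defining identity separately with $u=0$ and with $v=0$. Taking $u=0$ leaves $(-\Delta_{\mathrm{R}}\chi,v)_{H^{-1}\times H^1}=\langle\eta_2-\chi+\psi,v\rangle$ for all $v\in H^1(\Omega)$; since the right-hand side is an $L^2(\Omega)$ pairing, the a priori $H^{-1}(\Omega)$-element $-\Delta_{\mathrm{R}}\chi$ in fact lies in $L^2(\Omega)$, and elliptic regularity for the Robin problem on the smooth domain $\Omega$ upgrades this to $\chi\in H^2(\Omega)$ together with recovery of the boundary condition $\partial_{\mathbf{n}}\chi+\chi=0$, i.e. $\chi\in D(\Delta_{\mathrm{R}})$, and forces $\eta_2=-\Delta_{\mathrm{R}}\chi+\chi-\psi$. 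Taking $v=0$ and integrating by parts with $u\in D(\Delta_{\mathrm{R}})$, so that both boundary contributions are absorbed by $\Delta_{\mathrm{R}}$, reduces the identity to $\langle\Delta_{\mathrm{R}}u,w\rangle=\langle u,w\rangle$ for all $u\in D(\Delta_{\mathrm{R}})$, where $w:=\psi+\eta_1\in L^2(\Omega)$; self-adjointness of $\Delta_{\mathrm{R}}$ then gives $w\in D(\Delta_{\mathrm{R}})$ with $\Delta_{\mathrm{R}}w=w$, and since $-\Delta_{\mathrm{R}}$ is strictly positive ($-1$ is not an eigenvalue) we conclude $w=0$, whence $\psi=-\eta_1\in H^1(\Omega)$ and $\eta_1=-\psi$. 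Combining the two steps gives $D(\mathrm{R}^{\ast})\subseteq D(\Delta_{\mathrm{R}})\times H^1(\Omega)$ and the asserted action.

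I expect the main obstacle to be the $u=0$ step: passing from the weak statement $-\Delta_{\mathrm{R}}\chi\in L^2(\Omega)$ to genuine $H^2$-regularity of $\chi$ and, crucially, to the recovery of the Robin boundary condition on $\Gamma$, which is precisely where the smoothness of $\Gamma$ and elliptic regularity for the Robin Laplacian enter. By comparison, the determination of $\psi$ through the strict positivity of $-\Delta_{\mathrm{R}}$ is routine once the self-adjointness already recorded for $\Delta_{\mathrm{R}}$ is invoked.
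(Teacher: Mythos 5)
Your proposal is correct, and it is essentially the paper's argument: the paper's proof is just the one-line remark that the result follows from ``a calculation similar to \cite[Lemma 3.1]{Ball04}'', and that calculation is precisely what you carry out in full --- the direct verification that $D(\Delta_{\mathrm{R}})\times H^1(\Omega)\subseteq D(\mathrm{R}^\ast)$ via integration by parts with the Robin condition, and the reverse inclusion by testing with $u=0$ (elliptic regularity recovering $\chi\in H^2(\Omega)$ and $\partial_{\mathbf{n}}\chi+\chi=0$) and with $v=0$ (self-adjointness and strict positivity of $-\Delta_{\mathrm{R}}$ forcing $w=\psi+\eta_1=0$, where one may equivalently note $\langle -\Delta_{\mathrm{R}}w,w\rangle=-\|w\|^2\leq 0$). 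All steps check against the functional setting of \S 2.1, so your write-up supplies the details the paper delegates to the reference.
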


\begin{proof}
The proof is a calculation similar to, e.g., \cite[Lemma 3.1]{Ball04}.
\end{proof}

\subsection{Well-posedness of Problem (R)}

Semigroup methods are applied to obtain local weak/mild solutions. 
{\em{A priori}} estimates are then made to show that the local solutions are indeed global ones. 
Continuous dependence of the solution on the initial conditions and the uniqueness of solutions follows in the usual way by estimating the difference of two solutions. 
A semiflow (semigroup of solution operators) acting on the phase space is defined. Because of the continuous dependence estimate, the semiflow is Lipschitz continuous on the phase space. 
Due to the restrictive growth conditions imposed on $f(u)$, there is no regularity result. 

Formal multiplication of the PDE (\ref{damped-wave-equation}) by $2u_t$ in $L^2(\Omega)$ produces the {\em{energy equation}}
\begin{equation}\label{Robin-energy-3}
\frac{\diff}{\diff t} \left\{ \|\varphi\|^2_{\mathcal{H}_0} + 2\int_\Omega F(u) \diff x \right\} + 2\|u_t\|^2 = 0.
\end{equation}
Here, $F(s)=\int_0^s f(\sigma) \diff \sigma$. 

Note that from assumption (\ref{f-assumption-2}), it follows that there is a constant $\mu_0\in(0,1]$ such that, for all $\xi\in H^1(\Omega)$,
\begin{equation}\label{from-f-assumption-2}
2\int_\Omega F(\xi) \diff x \geq -(1-\mu_0)\|\xi\|^2_1 - \kappa_f
\end{equation}
for some constant $\kappa_f \ge 0$. 
A proof of (\ref{from-f-assumption-2}) can be found in \cite[page 1913]{CEL02}.
On the other hand, using (\ref{f-reg-ass-3}) and integration by parts on $F(s)=\int_{0}^{s}f(\sigma )\diff\sigma $, we have the upper-bound 
\begin{align}  \label{consequence-F-2}
\int_\Omega F(\xi)\diff x & \leq \langle f(\xi),\xi \rangle + \frac{\ell_2}{2\lambda_1}\|\xi\|^2_1.
\end{align}
Moreover, the inequality
\begin{equation}\label{from-f-assumption-1}
\langle f(u),u \rangle \geq -(1-\mu_0)\|u\|^2_1 - \kappa_f
\end{equation}
follows from the sign condition (\ref{f-assumption-2}) where $\mu_0\in(0,1]$ and $\kappa_f\geq0$ are from (\ref{from-f-assumption-2}). 

The definition of weak solution is from \cite{Ball77}.

\begin{definition} 
Let $T>0$. A map $\varphi\in C([0,T];\mathcal{H}_0)$ is a {\em{weak solution}} of (\ref{abstract-Robin-problem}) on $[0,T]$ if for each $\theta\in D(\mathrm{R}^*)$ the map $t \mapsto \langle \varphi(t),\theta \rangle_{\mathcal{H}_0}$ is absolutely continuous on $[0,T]$ and satisfies, for almost all $t\in[0,T]$,
\begin{equation}  \label{abs-1}
\frac{\diff}{\diff t}\langle \varphi(t),\theta \rangle_{\mathcal{H}_0} = \langle \varphi(t),\mathrm{R}^*\theta \rangle_{\mathcal{H}_0} + \langle \mathcal{F}(\varphi(t)),\theta \rangle_{\mathcal{H}_0}.
\end{equation}
The map $\varphi$ is a weak solution on $[0,\infty)$ (i.e. a {\em{global weak solution}}) if it is a weak solution on $[0,T]$ for all $T>0$. 
\end{definition}

According to \cite[Definition 3.1 and Proposition 3.5]{Ball04}, the notion of weak solution above is equivalent to the following notion of a mild solution.

\begin{definition}
Let $T>0$. A function $\varphi:[0,T]\rightarrow\mathcal{H}_0$ is a weak/mild solution of (\ref{abstract-Robin-problem}) on $[0,T]$ if and only if $\mathcal{F}(\varphi(\cdot))\in L^1(0,T;\mathcal{H}_0)$ and $\varphi$ satisfies the variation of constants formula, for all $t\in[0,T],$
\[
\varphi(t)=e^{\mathrm{R}t}\varphi_0 + \int_0^t e^{\mathrm{R}(t-s)}\mathcal{F}(\varphi(s)) \diff s.
\]
\end{definition}

Furthermore, by \cite[Proposition 3.4]{Ball04} and the explicit characterization of $D(\mathrm{R}^*)$, our notion of weak solution is also equivalent to the standard concept of a weak (distributional) solution to Problem (R).

\begin{definition}  \label{weak}
A function $\varphi=(u,u_{t}):[0,T]\rightarrow \mathcal{H}_0$ is a weak solution of (\ref{abstract-Robin-problem}) (and, thus of (\ref{damped-wave-equation})-(\ref{Robin-boundary})) on $[0,T],$ if, for almost all $t\in \left[ 0,T\right],$
\begin{equation*}
\varphi =(u,u_{t})\in C(\left[ 0,T\right] ;\mathcal{H}_0),
\end{equation*}
and, for each $\psi \in H^{1}\left( \Omega \right) ,$ $\langle u_{t},\psi \rangle \in C^{1}\left( \left[ 0,T\right] \right) $ with
\begin{equation*}
\frac{\diff}{\diff t}\left\langle u_{t}\left( t\right) ,\psi \right\rangle + \left\langle u_{t}\left( t\right) ,\psi \right\rangle + \left\langle u\left( t\right) ,\psi \right\rangle_1 + \left\langle u\left( t\right) ,\psi \right\rangle _{L^{2}\left( \Gamma \right) }=-\left\langle f\left( u\left( t\right) \right) ,\psi
\right\rangle. 
\end{equation*}
\end{definition}

Indeed, by \cite[Lemma 3.3]{Ball04} we have that $f:H^{1}\left( \Omega
\right) \rightarrow L^{2}\left( \Omega \right) $ is sequentially weakly
continuous and continuous thanks to the assumptions (\ref{f-assumption-1}) and (\ref{f-assumption-2}). 
Moreover, by \cite[Proposition 3.4]{Ball04} and the explicit representation of $D(\mathrm{R}^*)$ in Lemma \ref{adjoint-r}, $\langle \varphi_{t},\theta \rangle \in C^{1}\left( \left[ 0,T\right] \right) $ for all $\theta \in D\left( \mathrm{R}^{\ast }\right) $, and (\ref{abs-1}) is satisfied. 

Finally, the notion of strong solution to Problem (R) is as follows.

\begin{definition}  \label{Robin-strong}
Let $\varphi_0 = (u_0,u_1) \in \mathcal{D}_0$: that is, let $\varphi_0\in H^{2}(\Omega )\times H^{1}(\Omega )$ be such that 
\begin{equation*}
\partial _{\bf{n}}u_{0}+u_{0}=0 \ \text{on} \ \Gamma
\end{equation*}
is satisfied. 
A function $\varphi(t) = (u(t),u_t(t))$ is called a (global) strong solution if it is a (global) weak solution in the sense of Definition \ref{weak} and if it satisfies the following regularity properties:
\begin{equation}  \label{regularity-property}
\varphi \in L^{\infty }([0,\infty) ;\mathcal{D}_0)\quad\text{and}\quad\partial_t\varphi\in L^{\infty }([0,\infty) ;\mathcal{H}_0).
\end{equation}
Therefore, $\varphi(t) = (u(t), u_t(t)) $ satisfies the equations (\ref{damped-wave-equation})-(\ref{Robin-boundary}) almost everywhere; i.e., is a strong solution.
\end{definition}

We now have the first main result. 

\begin{theorem}\label{t:Robin-weak-solutions}
Assume (\ref{f-assumption-1}) and (\ref{f-assumption-2}) hold.
Let $\varphi_0\in\mathcal{H}_0$. 
Then there exists a unique global weak solution $\varphi\in C([0,\infty);\mathcal{H}_0)$ to (\ref{abstract-Robin-problem}). 
For each weak solution, the map 
\begin{equation}\label{C1-map}
t \mapsto \|\varphi(t)\|^2_{\mathcal{H}_0} + 2\int_\Omega F(u(t)) \diff x
\end{equation}
is $C^1([0,\infty))$ and the energy equation (\ref{Robin-energy-3}) holds (in the sense of distributions). 
Moreover, for all $\varphi_0, \theta_0\in\mathcal{H}_0$, there exists a positive constant $\nu_0>0$, depending on $\|\varphi_0\|_{\mathcal{H}_0}$ and $\|\theta_0\|_{\mathcal{H}_0}$, such that, for all $t\geq 0$,
\begin{equation}\label{continuous-dependence}
\|\varphi(t)-\theta(t)\|_{\mathcal{H}_0} \leq e^{\nu_0 t} \|\varphi_0 - \theta_0\|_{\mathcal{H}_0}.
\end{equation}
Additionally, when (\ref{f-assumption-2}), (\ref{f-reg-ass-2}), and (\ref{f-reg-ass-3}) hold, and $\varphi_0\in \mathcal{D}_0$, there exists a unique global strong solution $\varphi\in C([0,\infty);\mathcal{D}_0)$ to (\ref{abstract-Robin-problem}). 
\end{theorem}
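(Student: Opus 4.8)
The plan is to treat the abstract Cauchy problem (\ref{abstract-Robin-problem}) by semigroup methods, in the spirit of Ball's framework \cite{Ball04}. Since $(\mathrm{R},D(\mathrm{R}))$ generates a strongly continuous semigroup of contractions $e^{\mathrm{R}t}$ and $\mathcal{F}:\mathcal{H}_0\to\mathcal{H}_0$ is locally Lipschitz, a standard contraction-mapping argument applied to the map $\varphi\mapsto e^{\mathrm{R}t}\varphi_0+\int_0^t e^{\mathrm{R}(t-s)}\mathcal{F}(\varphi(s))\,\diff s$ produces a unique local mild (hence weak) solution on a maximal interval $[0,T_{\max})$, with $T_{\max}=\infty$ unless $\|\varphi(t)\|_{\mathcal{H}_0}$ blows up in finite time. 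To obtain the energy identity I would first argue that the map in (\ref{C1-map}) is $C^1$: here the weak/distributional formulation of Definition \ref{weak} together with the sequential weak continuity of $f:H^1(\Omega)\to L^2(\Omega)$ under (\ref{f-assumption-1})--(\ref{f-assumption-2}) (Ball's Lemma 3.3) is what legitimizes the formal multiplication of (\ref{damped-wave-equation}) by $2u_t$ and recovers (\ref{Robin-energy-3}) as an \emph{equality} rather than a mere inequality.

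With the energy equation in hand, global existence follows from an a priori bound. Integrating (\ref{Robin-energy-3}) shows that $E(t):=\|\varphi(t)\|^2_{\mathcal{H}_0}+2\int_\Omega F(u(t))\,\diff x$ is nonincreasing. The coercivity estimate (\ref{from-f-assumption-2}) then yields $E(t)\geq \mu_0\|\varphi(t)\|^2_{\mathcal{H}_0}-\kappa_f$ (using $\mu_0\in(0,1]$), so that $\|\varphi(t)\|^2_{\mathcal{H}_0}\leq \mu_0^{-1}(E(0)+\kappa_f)$ uniformly in $t$. This rules out finite-time blow-up and forces $T_{\max}=\infty$. For uniqueness and continuous dependence, take two solutions $\varphi=(u,u_t)$ and $\theta=(\tilde u,\tilde u_t)$ and set $w=\varphi-\theta$. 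Testing the equation for $w$ with $w$ in $\mathcal{H}_0$ and using $\langle\mathrm{R}w,w\rangle_{\mathcal{H}_0}=-\|u_t-\tilde u_t\|^2\leq 0$ leaves only the nonlinear term $-\langle f(u)-f(\tilde u),u_t-\tilde u_t\rangle$. The growth bound (\ref{f-assumption-1}) together with the embedding $H^1(\Omega)\hookrightarrow L^6(\Omega)$ in dimension three gives $\|f(u)-f(\tilde u)\|\leq C(1+\|u\|^2_1+\|\tilde u\|^2_1)\|u-\tilde u\|_1$, and the a priori bound from the previous step turns the prefactor into a constant depending only on $\|\varphi_0\|_{\mathcal{H}_0}$ and $\|\theta_0\|_{\mathcal{H}_0}$. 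Gr\"onwall's inequality then yields (\ref{continuous-dependence}), with uniqueness the special case $\varphi_0=\theta_0$.

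For the strong-solution statement I would exploit the additional regularity of $f$. The second-order growth bound (\ref{f-reg-ass-2}) is precisely what upgrades the Nemytskii operator $u\mapsto f(u)$ to a continuously Fr\'echet-differentiable map $H^1(\Omega)\to L^2(\Omega)$, and hence $\mathcal{F}\in C^1(\mathcal{H}_0;\mathcal{H}_0)$; the classical regularity theory for semilinear equations (e.g.\ \cite[Theorem 6.1.5]{Pazy83}) then promotes the mild solution with $\varphi_0\in\mathcal{D}_0=D(\mathrm{R})$ to a solution satisfying $\varphi\in C^1([0,T];\mathcal{H}_0)\cap C([0,T];\mathcal{D}_0)$ on each interval of existence. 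The remaining, and I expect the main, obstacle is to make these solutions global by producing an a priori bound in the $\mathcal{D}_0$-norm on every compact interval. For this I would differentiate (\ref{abstract-Robin-problem}) in time and run the basic energy estimate on $\varphi_t=(u_t,u_{tt})$; the delicate term is $\langle f'(u)u_t,u_t\rangle$, which is controlled exactly by the lower bound (\ref{f-reg-ass-3}), $f'(s)\geq -\ell_2$. A Gr\"onwall argument then bounds $\|u_t\|_1$ and $\|u_{tt}\|$, after which reading the elliptic identity $-\Delta u+u=-(u_{tt}+u_t+f(u))$ as a Robin problem at fixed $t$ and invoking $H^2$-elliptic regularity for $\Delta_{\mathrm{R}}$ recovers the bound on $\|u(t)\|_2$, closing the argument and giving $\varphi\in C([0,\infty);\mathcal{D}_0)$.
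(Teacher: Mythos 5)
Your treatment of the weak theory coincides with the paper's: local existence via the mild-solution/contraction machinery (the paper simply cites \cite[Theorem 2.5.4]{Zheng04} rather than redoing the fixed point), globality by integrating the energy identity (\ref{Robin-energy-3}) and invoking the coercivity (\ref{from-f-assumption-2}) (the paper additionally uses $|F(s)|\le C|s|(1+|s|^3)$ to convert $E(0)$ into a function of $\|\varphi_0\|_{\mathcal{H}_0}$, which your argument implicitly needs as well), and continuous dependence by testing the difference equation and applying Gr\"onwall, where your explicit bound $\|f(u)-f(\tilde u)\|\le C(1+\|u\|^2_1+\|\tilde u\|^2_1)\|u-\tilde u\|_1$ via $H^1(\Omega)\hookrightarrow L^6(\Omega)$ is exactly what underlies the paper's citation of the local Lipschitz continuity of $f:H^1(\Omega)\to L^2(\Omega)$. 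Where you genuinely diverge is the strong-solution part. The paper disposes of it in one stroke: thanks to (\ref{f-reg-ass-2}), $\mathcal{F}$ maps $D(\mathrm{R})$ into $D(\mathrm{R})$ and is locally Lipschitz in the graph norm (if $u\in H^2(\Omega)\hookrightarrow L^\infty(\Omega)$ then $f(u)\in H^1(\Omega)$), so \cite[Theorem 2.5.6]{Zheng04} upgrades the global weak solution with $\varphi_0\in\mathcal{D}_0$ to a global strong one, with no time-differentiation and no elliptic regularity. Your route --- $\mathcal{F}\in C^1$ plus \cite{Pazy83} for local classical solutions, then a $\mathcal{D}_0$-bound on compact intervals via the equation satisfied by $u_t$ and Robin $H^2$-elliptic regularity --- is correct in outline, but it is the heavier strategy that the paper reserves for the genuinely harder \emph{dissipative} $\mathcal{D}_0$-estimates behind Theorems \ref{exp-attr-r-1} and \ref{ear}; for bare globality it is more work than necessary, though it does buy quantitative control that the abstract invariance theorem does not provide.

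One detail in your sketch needs repair. After differentiating in time and multiplying the equation for $h=u_t$ by $2h_t=2u_{tt}$, the nonlinear term is $2\langle f'(u)u_t,u_{tt}\rangle$, not $\langle f'(u)u_t,u_t\rangle$, and the lower bound (\ref{f-reg-ass-3}) does not act on it directly. What controls it is the growth of $f'$ implied by (\ref{f-reg-ass-2}): $\|f'(u)u_t\|\le C\left(1+\|u\|^2_{L^6}\right)\|u_t\|_{L^6}\le C\|u_t\|_1$, which closes the Gr\"onwall loop for the quantity $\|u_t\|^2_1+\|u_{tt}\|^2$ (exponential-in-time growth on compact intervals is enough for global existence). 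The bound $f'(s)\ge-\ell_2$ becomes essential only when one adds a multiplier of the form $\eta u_t$ to make the estimate dissipative --- that is, for the absorbing set in $\mathcal{D}_0$, not for globality.
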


\begin{proof}
As discussed in the previous sections, the operator $\mathrm{R}$ with domain $D(\mathrm{R})=D(\Delta_{\rm{R}}) \times H^1(\Omega)$ is an infinitesimal generator of a strongly continuous semigroup of contractions on $\mathcal{H}_0$, and the map $\mathcal{F}:\mathcal{H}_0\rightarrow\mathcal{H}_0$ is locally Lipschitz continuous. Therefore, by, e.g. \cite[Theorem 2.5.4]{Zheng04}, for any $\varphi_0\in\mathcal{H}_0$, there is a $T^*>0$, depending on $\|\varphi_0\|_{\mathcal{H}_0}$, such that the abstract Problem (\ref{abstract-Robin-problem}) admits a unique local weak solution on $[0,T^*)$ satisfying 
\[
\varphi\in C([0,T^*);\mathcal{H}_0).
\]

The next step is to show that $T^*=\infty$. Since the map (\ref{C1-map}) is absolutely continuous on $[0,T^*)$, then integration of the energy equation (\ref{Robin-energy-3}) over $(0,t)$ yields, for all $t\in[0,T^*)$,
\begin{equation}\label{Robin-energy-4}
\|\varphi(t)\|^2_{\mathcal{H}_0} + 2\int_\Omega F(u(t)) \diff x + 2\int_0^t \|u_t(\tau)\|^2 d\tau= \|\varphi_0\|^2_{\mathcal{H}_0} + 2\int_\Omega F(u_0) \diff x.
\end{equation}
Applying inequality (\ref{from-f-assumption-2}) to (\ref{Robin-energy-4}), omitting the remaining (positive) integral on the left hand side of (\ref{Robin-energy-4}), and using the fact that the estimate $|F(s)|\leq C|s|(1+|s|^3)$ follows from assumption (\ref{f-assumption-1}), for some constant $C>0$, then, for all $t\in[0,T^*)$,
\begin{equation}\label{Robin-bound-1}
\|\varphi(t)\|_{\mathcal{H}_0} \leq C \|\varphi_0\|_{\mathcal{H}_0}.
\end{equation}
Since the bound on the right hand side of (\ref{Robin-bound-1}) is independent of $t\in[0,T^*)$, $T^*$ can be extended indefinitely; therefore $T^*=+\infty$.

To show that (\ref{continuous-dependence}) holds, let $\varphi_0=(u_0,u_1), \theta_0=(v_0,v_1)\in\mathcal{H}_0$. Let $\varphi(t)=(u(t),u_t(t))$ and, respectively, $\theta(t)=(v(t),v_t(t))$ denote the corresponding global solutions of Problem (R) on $[0,\infty)$ with the initial data $\varphi_0$ and $\theta_0$.
For all $t\ge0$, 
\begin{align}
\bar\varphi(t) & := \varphi(t)-\theta(t)  \notag \\
& = \left(( u(t),u_t(t) \right) - \left( v(t),v_t(t) \right)  \notag \\
& =: \left( z(t),z_t(t) \right),  \notag
\end{align}
and 
\begin{align}
\bar\varphi_0 & := \varphi_0-\theta_0  \notag \\ 
& = (u_0-v_0,u_1-v_1)  \notag \\ 
& =: (z_0,z_1).  \notag
\end{align}
Then $z$, satisfies the IBVP
\begin{equation}\label{dependence-1}
\left\{ \begin{array}{ll} z_{tt}+z_t-\Delta z+z+f(u)-f(v)=0 & \text{in} \ (0,\infty)\times\Omega \\ z(0,\cdot)=z_0, ~z_t(0,\cdot)=z_1 & \text{on} \ \{0\}\times\Omega \\ \partial_{\bf{n}}z+z=0 & \text{on} \ (0,\infty)\times\Gamma. \end{array} \right.
\end{equation}
Multiply (\ref{dependence-1})$_1$ by $2z_t$ in $L^2(\Omega)$ to yield, for almost all $t\geq0$,
\begin{equation}\label{dependence-2}
\frac{\diff}{\diff t} \|\bar\varphi(t)\|^2_{\mathcal{H}_0} + 2\|z_t\|^2 = 2\langle f(v)-f(u),z_t \rangle.
\end{equation}
Since $f:H^1(\Omega)\rightarrow L^2(\Omega)$ is locally Lipschitz continuous, then
\begin{equation}\label{dependence-3}
2|\langle f(v)-f(u),z_t \rangle| \leq C\|z\|^2_1 + \|z_t\|^2,
\end{equation}
where the constant $C>0$ depends on the local Lipschitz continuity of $f$ as well as the uniform bound on the weak solutions $u$ and $v$ thanks to (\ref{Robin-bound-1}). 
After omitting the term $2\|z_t\|^2$ on the left hand side of (\ref{dependence-2}) and adding $\|z\|^2_{L^2(\Gamma)}$ to the right hand side, combining (\ref{dependence-2}) and (\ref{dependence-3}) produces, for almost all $t\geq 0$,
\[
\frac{\diff}{\diff t}\|\bar\varphi(t)\|^2_{\mathcal{H}_0} \leq C\|\bar\varphi(t)\|^2_{\mathcal{H}_0}.
\]
The mapping $t\mapsto \|(z(t),z_t(t))\|^2_{\mathcal{H}_0}$ is absolutely continuous on $(0,\infty)$, so equation (\ref{continuous-dependence}) is obtained after applying a Gr\"onwall inequality. 
The uniqueness of the solutions now follows from (\ref{continuous-dependence}).

To prove the existence of strong solutions, let $\varphi_0\in\mathcal{D}_0$ and assume (\ref{f-assumption-2}), (\ref{f-reg-ass-2}), and (\ref{f-reg-ass-3}) hold.
(Recall, $\mathcal{D}_0=D(\mathrm{R})$.)
We know that $\mathcal{F}:D(R)\rightarrow D(\mathrm{R})$ is locally Lipschitz continuous thanks to (\ref{f-reg-ass-2}).
It now follows that (cf. e.g. \cite[Theorem 2.5.6]{Zheng04}) there exists a unique (global) solution; i.e., a weak solution satisfying 
\begin{equation*}
\varphi\in C^1([0,\infty);\mathcal{H}_0) \cap C^0([0,\infty);D(\mathrm{R})).
\end{equation*} 
Thus, (\ref{regularity-property}) holds.
This finishes the proof.
\end{proof}

\begin{remark}
The interested author may also view \cite{Wu&Zheng06} where strong solutions are obtained for (\ref{damped-wave-equation}) with the dynamic boundary condition (\ref{DBC}).
There, the authors also show the convergence of strong solutions to equilibrium when $f$ is (real) analytic.
\end{remark}

The following provides the dynamical system we associate with Problem (R).

\begin{corollary}  \label{sf-r}
Let $\varphi_0=(u_0,u_1)\in\mathcal{H}_0$ and $u$ be the unique global solution of Problem (R). 
The family of maps $S_0=(S_0(t))_{t\geq 0}$ defined by 
\[
S_0(t)\varphi_0(x):=(u(t,x,u_0,u_1),u_t(t,x,u_0,u_1))
\]
is a {\em{semiflow}} generated by Problem (R).
The operators $S_0(t)$ satisfy
\begin{enumerate}
	\item $S_0(t+s)=S_0(t)S_0(s)$ for all $t,s\geq 0$.
	\item $S_0(0)=I_{\mathcal{H}_0}$ (the identity on $\mathcal{H}_0$)
	\item $S_0(t)\varphi_0\rightarrow S_0(t_0)\varphi_0$ for every $\varphi_0\in\mathcal{H}_0$ when $t\rightarrow t_0$.
\end{enumerate}

Additionally, each mapping $S_0(t):\mathcal{H}_0\rightarrow\mathcal{H}_0$ is Lipschitz continuous, uniformly in $t$ on compact intervals; i.e., for all $\varphi_0, \theta_0\in\mathcal{H}_0$, and for each $T\geq 0$, and for all $t\in[0,T]$,
\begin{equation}\label{S0-Lipschitz-continuous}
\|S_0(t)\varphi_0-S_0(t)\theta_0\|_{\mathcal{H}_0} \leq e^{\nu_0 T}\|\varphi_0-\theta_0\|_{\mathcal{H}_0}.
\end{equation}
\end{corollary}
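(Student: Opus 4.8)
The plan is to obtain every assertion as a direct consequence of Theorem \ref{t:Robin-weak-solutions}, which already supplies global existence, uniqueness, temporal continuity in $\mathcal{H}_0$, and the continuous-dependence estimate (\ref{continuous-dependence}). First I would check that $S_0(t)$ is well-defined: for each $\varphi_0 \in \mathcal{H}_0$ and each $t \geq 0$, Theorem \ref{t:Robin-weak-solutions} produces a \emph{unique} global weak solution $\varphi \in C([0,\infty);\mathcal{H}_0)$, so setting $S_0(t)\varphi_0 := \varphi(t)$ yields a single-valued map on $\mathcal{H}_0$; uniqueness is precisely what makes this assignment unambiguous. Properties (2) and (3) are then immediate: property (2), $S_0(0) = I_{\mathcal{H}_0}$, follows from the initial condition $\varphi(0)=\varphi_0$ in (\ref{abstract-Robin-problem}), while property (3), the continuity $t \mapsto S_0(t)\varphi_0$ at each $t_0$, is just a restatement of the regularity $\varphi \in C([0,\infty);\mathcal{H}_0)$ delivered by the theorem.

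For the semigroup property (1) I would exploit the fact that Problem (R) is autonomous: neither $\mathrm{R}$ nor $\mathcal{F}$ depends on $t$. Fixing $\varphi_0 \in \mathcal{H}_0$ and $s \geq 0$, let $\varphi$ be the global solution with $\varphi(0)=\varphi_0$ and define the time-shifted trajectory $\psi(t):=\varphi(t+s)$. Using the variation-of-constants (mild) formulation, the semigroup law $e^{\mathrm{R}(t+s)}=e^{\mathrm{R}t}e^{\mathrm{R}s}$, and the $t$-independence of $\mathcal{F}$, one checks that $\psi$ is itself a global weak/mild solution with initial datum $\psi(0)=\varphi(s)=S_0(s)\varphi_0$. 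By the uniqueness part of Theorem \ref{t:Robin-weak-solutions}, $\psi(t)$ must coincide with $S_0(t)\big(S_0(s)\varphi_0\big)$; evaluating at a general $t \geq 0$ gives $S_0(t+s)\varphi_0 = \varphi(t+s) = \psi(t) = S_0(t)S_0(s)\varphi_0$, as required.

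Finally, the Lipschitz bound (\ref{S0-Lipschitz-continuous}) is read off directly from the continuous-dependence estimate (\ref{continuous-dependence}). Given $\varphi_0,\theta_0 \in \mathcal{H}_0$ with corresponding solutions $\varphi,\theta$, and any $t \in [0,T]$, one has $\|S_0(t)\varphi_0 - S_0(t)\theta_0\|_{\mathcal{H}_0} = \|\varphi(t)-\theta(t)\|_{\mathcal{H}_0} \leq e^{\nu_0 t}\|\varphi_0-\theta_0\|_{\mathcal{H}_0} \leq e^{\nu_0 T}\|\varphi_0-\theta_0\|_{\mathcal{H}_0}$, where the last step uses monotonicity of $\tau \mapsto e^{\nu_0 \tau}$.

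The argument is thus a soft consequence of the well-posedness theorem, and I would expect no deep obstacle. The two points that genuinely require care are, first, verifying at the level of the weak/mild formulation that the time-shifted trajectory $\psi$ is again an admissible solution (so that uniqueness can be invoked), and second, the bookkeeping of the constant $\nu_0$: by (\ref{continuous-dependence}) it depends on $\|\varphi_0\|_{\mathcal{H}_0}$ and $\|\theta_0\|_{\mathcal{H}_0}$, so the Lipschitz constant $e^{\nu_0 T}$ is uniform in $t$ over the compact interval $[0,T]$ and uniform over bounded sets of initial data, but \emph{not} globally uniform in the data. This reflects the merely local Lipschitz continuity of $\mathcal{F}$, and tracking this dependence carefully—rather than overcoming any real difficulty—is the main thing to get right.
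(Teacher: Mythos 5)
Your proposal is correct and follows essentially the same route as the paper: the paper likewise obtains property (3) from the definition of weak solution (i.e.\ $\varphi\in C([0,\infty);\mathcal{H}_0)$) and reads the Lipschitz bound (\ref{S0-Lipschitz-continuous}) directly off the continuous-dependence estimate (\ref{continuous-dependence}), with the same (correct) caveat that $\nu_0$ depends on the data through $\|\varphi_0\|_{\mathcal{H}_0}$ and $\|\theta_0\|_{\mathcal{H}_0}$. The only difference is presentational: for properties (1) and (2) the paper simply cites standard references on abstract Cauchy problems (e.g.\ \cite[\S1.2.4]{Milani&Koksch05}), whereas you spell out the standard time-shift/uniqueness argument in the mild formulation---which is precisely the argument those references contain.
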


\begin{proof}
The semigroup properties (1) and (2) are well-known and apply to a general class of abstract Cauchy problems possessing many applications (see \cite{Babin&Vishik92,Goldstein85,Morante79,Tanabe79}; in particular, a proof of property (1) is given in \cite[\S1.2.4]{Milani&Koksch05}). 
The continuity in $t$ described by property (3) follows from the definition of weak solution (this also establishes strong continuity of the operators when $t_0=0$). 
The continuity property (\ref{S0-Lipschitz-continuous}) follows from (\ref{continuous-dependence}).
\end{proof}

\subsection{Dissipativity of Problem (R)}

We will now show that the dynamical system $(S_0(t),\mathcal{H}_0)$ generated by the weak solutions of Problem (R) is dissipative in the sense that $S_0$ admits a closed, positively invariant, bounded absorbing set in $\mathcal{H}_0$. 

The following functional will be of use in the proof of the following theorem.
For $\varphi=(u,v)\in\mathcal{H}_0$, define the map $E_0:\mathcal{H}_0\rightarrow\mathbb{R}$ by
\begin{equation}  \label{Robin-functional-0}
E_0(\varphi) :=\|\varphi\|^2_{\mathcal{H}_0} + 2\mu_0\langle u,v \rangle + 2\int_\Omega F(u) \diff x.
\end{equation}
Using (\ref{from-f-assumption-2}) and the growth estimate (\ref{f-assumption-1}), the functional $E_0(\varphi)$ satisfies, for some constants $C_1,C_2>0$ and for all $\varphi\in\mathcal{H}_0$,
\begin{equation}  \label{Robin-functional-1}
C_1\|\varphi\|^2_{\mathcal{H}_0} - \kappa_f \leq E_0(\varphi) \leq C_2\|\varphi\|_{\mathcal{H}_0}(1+\|\varphi\|^3_{\mathcal{H}_0}).
\end{equation}

\begin{lemma}
Assume (\ref{f-assumption-1}) and (\ref{f-assumption-2}) hold.
There exists $R_0>0$ with the property that: for every $R>0$, there exists $t_0=t_0(R)\geq 0$, depending on $R$, such that for every $\varphi_0\in\mathcal{H}_0$ with $\|\varphi_0\|_{\mathcal{H}_0}\le R$ and for every $t\geq t_0$,
\[
\|S_0(t)\varphi_0\|_{\mathcal{H}_0}\leq R_0.
\]
Furthermore, the set 
\begin{equation}  \label{abss-0}
\mathcal{B}_0:=\{ \varphi\in \mathcal{H}_0 : \|\varphi\|_{\mathcal{H}_0} \le R_0 \}
\end{equation}
is closed, bounded, absorbing, and positively invariant for the semiflow $S_0$ in $\mathcal{H}_0$.
\end{lemma}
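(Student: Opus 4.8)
The plan is to exhibit dissipativity through the modified energy functional $E_0$ of (\ref{Robin-functional-0}), whose two-sided bounds (\ref{Robin-functional-1}) are already in hand. The crucial preliminary observation is that if the sign condition (\ref{from-f-assumption-2})--(\ref{from-f-assumption-1}) holds for a constant $\mu_0\in(0,1]$, then it holds for every smaller value as well; I may therefore fix $\mu_0$ as small as convenient (say $\mu_0\le 1/4$), which is precisely what makes the cross term $2\mu_0\langle u,v\rangle$ a genuine lower-order perturbation of the natural energy.

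First I would differentiate $E_0$ along solutions. The derivative of the part $\|\varphi\|^2_{\mathcal{H}_0}+2\int_\Omega F(u)\diff x$ equals $-2\|u_t\|^2$ by the energy identity (\ref{Robin-energy-3}). For the cross term I compute $\frac{\diff}{\diff t}\,2\mu_0\langle u,u_t\rangle = 2\mu_0\|u_t\|^2+2\mu_0\langle u,u_{tt}\rangle$ and substitute $u_{tt}=-u_t+\Delta_{\mathrm{R}}u-u-f(u)$ from (\ref{abstract-Robin-problem}), using $\langle u,\Delta_{\mathrm{R}}u\rangle=-(\|\nabla u\|^2+\|u\|^2_{L^2(\Gamma)})$. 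These manipulations are only formal for weak solutions, so I would first carry them out for strong solutions (dense initial data in $\mathcal{D}_0$, available from Theorem \ref{t:Robin-weak-solutions}) and then pass to the limit using the continuous-dependence estimate (\ref{continuous-dependence}). Collecting terms, the nonlinearity enters \emph{only} through $-2\mu_0\langle f(u),u\rangle$, which by (\ref{from-f-assumption-1}) is $\le 2\mu_0(1-\mu_0)\|u\|^2_1+2\mu_0\kappa_f$. After a weighted Young inequality on $-2\mu_0\langle u,u_t\rangle$ (and the Poincar\'e inequality (\ref{our-Poincare-0}) if desired), and with $\mu_0$ small, all quadratic coefficients become negative, yielding the key differential inequality
\[
\frac{\diff}{\diff t}E_0(\varphi(t)) \le -\omega_0\|\varphi(t)\|^2_{\mathcal{H}_0} + C
\]
for some $\omega_0,C>0$. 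The essential structural point is that the only term growing superlinearly in $\|\varphi\|_{\mathcal{H}_0}$, namely $2\int_\Omega F(u)\diff x$, sits inside $E_0$ but never appears on the right-hand side of its derivative.

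This is exactly why a naive Gr\"onwall argument producing exponential decay of $E_0$ is \emph{unavailable}, and it is the step I expect to be the main obstacle. In its place I would run a barrier argument off the two-sided control (\ref{Robin-functional-1}), whose lower bound is quadratic and whose upper bound is (up to) quartic: choose $M$ so large that $E_0\ge M$ forces, via the quartic upper bound, $\|\varphi\|^2_{\mathcal{H}_0}$ to be large enough that the right-hand side of the displayed inequality is $\le -1$. Then $E_0$ can never cross the level $M$ in the upward direction, so the sublevel set $\{E_0\le M\}$ is positively invariant; and any trajectory starting with $\|\varphi_0\|_{\mathcal{H}_0}\le R$ enters it after a time $t_0(R)$ estimated from the quartic upper bound on $E_0(\varphi_0)$.

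Finally, the lower bound in (\ref{Robin-functional-1}) converts $E_0\le M$ into $\|\varphi\|^2_{\mathcal{H}_0}\le(M+\kappa_f)/C_1=:R_0^2$, which supplies the absorbing radius. The set $\mathcal{B}_0$ in (\ref{abss-0}) is then closed and bounded by inspection and absorbing by the above; its positive invariance is obtained by identifying it (enlarging $R_0$ if necessary) with the invariant sublevel set $\{E_0\le M\}$, which is sandwiched between norm balls through (\ref{Robin-functional-1}).
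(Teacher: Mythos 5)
Your proposal is correct and follows essentially the paper's own route: your computation of $\frac{\diff}{\diff t}E_0$ is exactly the paper's multiplication of (\ref{damped-wave-equation}) by $u_t+\eta u$ (your small fixed $\mu_0$ playing the role of the paper's auxiliary parameter $\eta$), and your barrier/invariant-sublevel-set argument off the two-sided bounds (\ref{Robin-functional-1}) is precisely the content of Proposition \ref{t:diff-ineq-1} (the Belleri--Pata lemma), which the paper cites rather than reproves. The one slip is your justification of the formal multiplication: the strong solutions you invoke are established in Theorem \ref{t:Robin-weak-solutions} only under the additional hypotheses (\ref{f-reg-ass-2})--(\ref{f-reg-ass-3}), not under (\ref{f-assumption-1})--(\ref{f-assumption-2}) alone -- though this is harmless, since (\ref{f-assumption-1}) makes $\mathcal{F}$ continuously differentiable on $\mathcal{H}_0$, so classical solutions for data in $D(\mathrm{R})$ follow from standard semigroup theory and your density-plus-continuous-dependence limit goes through (the paper itself performs the multiplication formally).
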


\begin{proof}
Multiply equation (\ref{damped-wave-equation}) by $u_t+\eta u$, where $\eta>0$ is a sufficiently small constant yet to be chosen, and integrate over $\Omega$ to yield, for almost all $t\geq 0$,
\begin{equation}  \label{Robin-absorbing-set-1}
\frac{1}{2}\frac{\diff}{\diff t}E_0(\varphi) + \eta\|u\|^2_1 + \eta\|u\|^2_{L^2(\Gamma)} + \eta\langle u,u_t \rangle + (1-\eta)\|u_t\|^2 + \eta\langle f(u),u \rangle = 0.
\end{equation}
Directly from (\ref{from-f-assumption-1})
\begin{equation}\label{from-f-1-and-Poincare}
\eta\langle f(u),u \rangle \ge -\eta(1-\mu_0)\|u\|^2_1 - \eta\kappa_f.
\end{equation}
With the basic estimate,
\begin{equation}  \label{fuk-1}
\eta\langle u,u_t \rangle \ge -\frac{\eta\mu_0}{2}\|u\|^2_1 - \frac{\eta}{2\mu_0}\|u_t\|^2,
\end{equation}
and after also applying (\ref{from-f-1-and-Poincare}) to equation (\ref{Robin-absorbing-set-1}), we find that for each $\mu_0\in(0,1]$ and $0<\eta<\left( 1+\frac{1}{2\mu_0} \right)^{-1}$, there is a constant $m_0=m_0(\mu_0,\eta)>0$ in which, for almost all $t\geq 0$,
\[
\frac{\diff}{\diff t}E_0(\varphi) + 2m_0\|\varphi\|^2_{\mathcal{H}_0} \le 2\eta\kappa_f.
\]

Let $\widetilde R>0$. 
For all $\varphi_0\in \mathcal{H}_0$ with $\|\varphi_0\|_{\mathcal{H}_0}\leq \widetilde R$, the upper-bound in (\ref{Robin-functional-1}) reads
\[
E_0(\varphi_0) \leq C_2\widetilde R(1+\widetilde{R}^3).
\]
Hence, for all $\widetilde R>0$, there exists $R>0$ such that, for all $\varphi_0\in\mathcal{H}_0$ with $\|\varphi_0\|_{\mathcal{H}_0}\le \widetilde R$, then $E_0(\varphi_0)\le R.$
From the lower-bound in (\ref{Robin-functional-1}), we immediately see that $\sup_{t\ge0}E_0(\varphi(t))\ge - \kappa_f.$
By Lemma \ref{t:diff-ineq-1} there exists $t_0>0$, depending on $\kappa_f$, such that for all $t\geq t_0$ and for all $\varphi_0\in \mathcal{H}_0$ with $\|\varphi_0\|_{\mathcal{H}_0}\leq \widetilde R$,
\[
E_0(S_0(t)\varphi_0) \le \sup_{\varphi\in \mathcal{H}_0} \left\{ E_0(\varphi) : m_0\|\varphi\|^2_{\mathcal{H}_0}\leq \eta\kappa_f \right\}.
\]
Thus, there is $R_0>0$ such that, for all $t\geq t_0$ and for all $\varphi_0\in \mathcal{H}_0$ with $\|\varphi_0\|_{\mathcal{H}_0}\leq \widetilde R$,
\begin{equation}\label{Robin-semiflow-absorbing}
\|S_0(t)\varphi_0\|_{\mathcal{H}_0} \leq R_0.
\end{equation}
By definition, the set $\mathcal{B}_0$ in (\ref{abss-0}) is closed and bounded in $\mathcal{H}_0$. 
The inequality in (\ref{Robin-semiflow-absorbing}) implies that $\mathcal{B}_0$ is absorbing: given any nonempty bounded subset $B$ of $\mathcal{H}_0$, there is a $t_0\geq 0$ depending on $B$ in which, for all $t\geq t_0$, $S_0(t)B\subseteq \mathcal{B}_0$. 
Consequently, since $\mathcal{B}_0$ is bounded, $\mathcal{B}_0$ is also positively invariant under the semiflow $S_0$.
\end{proof}

\begin{remark}  \label{Robin-time}
According to Lemma \ref{t:diff-ineq-1}, $t_0$ depends on $\eta$, $m_0$ (both described above in the proof) and $\kappa_f$ as 
\[
t_0=\frac{1}{\iota}\left( C_2R(1+R^3)+\eta\kappa_f \right).
\] 
Moreover, thanks to the bounds given in (\ref{Robin-functional-1}), and with the differential inequality (\ref{id-007}), we may explicitly compute the radius of $\mathcal{B}_0$.
To begin, suppose $\varphi_0\in\mathcal{H}_0$ is such that $\|\varphi_0\|_{\mathcal{H}_0}\le R.$
Integrate (\ref{id-007}) on $[0,t]$.
After applying (\ref{Robin-functional-1}) twice, we obtain the inequality
\[
\|\varphi(t)\|_{\mathcal{H}_0}\le C_1^{1/2} \left( (\eta\kappa_f-m_0R^2)t + C_2R(1+R^3)+\eta\kappa_f \right)^{1/2}.
\]
Since (\ref{Robin-bound-1}) must hold, it must be the case that $\eta\kappa_f-m_0 R^2<0;$ i.e.,
\[
R>\sqrt{\frac{\eta\kappa_f}{m_0}}.
\]
Thus, we set $R=R(\iota)=\sqrt{\frac{\eta\kappa_f+\iota}{m_0}}$, for any $\iota>0$, and therefore obtain the radius of $\mathcal{B}_0$ to be,
\begin{equation}  \label{reg-radii}
R_0^2(\iota) := \frac{C_2}{C_1} \left(\frac{\eta\kappa_f+\iota}{m_0}\right)^{1/2} \left( \eta\kappa_f+1 + \left( \frac{\eta\kappa_f+\iota}{m_0} \right)^{3/2} \right).
\end{equation}
\end{remark}

\subsection{Global attractor for Problem (R)}

We now aim to prove

\begin{theorem}  \label{t:robin-global}
Assume (\ref{f-assumption-1}) and (\ref{f-assumption-2}) hold.
The semiflow $S_0$ generated by the weak solutions of Problem (R) admits a global attractor $\mathcal{A}_0$ in $\mathcal{H}_0$. The global attractor is invariant under the semiflow $S_0$ (both positively and negatively) and attracts all nonempty bounded subsets of $\mathcal{H}_0$; precisely, 
\begin{enumerate}
\item For each $t\geq 0$, $S_0(t)\mathcal{A}_0=\mathcal{A}_0$, and 
\item For every nonempty bounded subset $B$ of $\mathcal{H}_0$,
\[
\lim_{t\rightarrow\infty}{\rm{dist}}_{\mathcal{H}_0}(S_0(t)B,\mathcal{A}_0):=\lim_{t\rightarrow\infty}\sup_{\varphi\in B}\inf_{\theta\in\mathcal{A}_0}\|S_0(t)\varphi-\theta\|_{\mathcal{H}_0}=0.
\]
\end{enumerate}
The global attractor is unique and given by 
\[
\mathcal{A}_0=\omega(\mathcal{B}_0):=\bigcap_{s\geq 0}{\overline{\bigcup_{t\geq s} S_0(t)\mathcal{B}_0}}^{\mathcal{H}_0}.
\]
Furthermore, $\mathcal{A}_0$ is the maximal compact invariant subset in $\mathcal{H}_0$.
\end{theorem}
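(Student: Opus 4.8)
The plan is to invoke the abstract theory of global attractors for asymptotically compact semiflows developed by Ball in \cite{Ball00,Ball04}. Once $S_0$ is known to possess a bounded absorbing set and to be asymptotically compact, the set $\omega(\mathcal{B}_0)$ is automatically the global attractor, and its invariance, the attraction property, uniqueness, and maximality among compact invariant subsets of $\mathcal{H}_0$ are the standard output of that framework. The closed, bounded, positively invariant absorbing set $\mathcal{B}_0$ of (\ref{abss-0}) has already been produced in the absorbing-set lemma above, and the semiflow properties together with Lipschitz continuity are recorded in Corollary \ref{sf-r}. Thus essentially the entire burden of the proof is to establish \emph{asymptotic compactness}: every sequence $S_0(t_n)\varphi_{0,n}$, with $t_n\to\infty$ and $\{\varphi_{0,n}\}$ bounded in $\mathcal{H}_0$, must admit a subsequence converging strongly in $\mathcal{H}_0$.

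Since the subcritical growth (\ref{f-assumption-1}) does not permit a splitting of $S_0(t)$ into a uniformly decaying part and a smoothing (compact) part, I would follow Ball's \emph{energy-equation method}. The first ingredient is the weak-sequential continuity of the solution operators: if $\varphi_{0,n}\rightharpoonup\varphi_0$ in $\mathcal{H}_0$, then $S_0(t)\varphi_{0,n}\rightharpoonup S_0(t)\varphi_0$ for each $t\ge0$. This I would obtain from the uniform a priori bound (\ref{Robin-bound-1}), which controls $u_n$ in $L^\infty(0,T;H^1(\Omega))$ and $\partial_t u_n$ in $L^\infty(0,T;L^2(\Omega))$ (and, via the equation, $\partial_{tt}u_n$ in a negative-order space). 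An Aubin--Lions argument then yields strong convergence of $u_n$ in $C([0,T];L^2(\Omega))$; combined with the sequential weak continuity of $f:H^1(\Omega)\to L^2(\Omega)$ (\cite[Lemma 3.3]{Ball04}) this allows passage to the limit in the weak formulation of (\ref{abstract-Robin-problem}) and identifies the weak limit as $S_0(t)\varphi_0$.

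The second, and genuinely delicate, ingredient is the upgrade from weak to strong convergence. Writing $\varphi_n:=S_0(t_n)\varphi_{0,n}$, absorption places $\varphi_n$ in $\mathcal{B}_0$ for large $n$, so along a subsequence $\varphi_n\rightharpoonup\xi=(\xi_u,\xi_v)$ in $\mathcal{H}_0$. The crucial observation is that the $\mathcal{H}_0$-norm splits into a \emph{principal part} $\|\nabla u\|^2+\|u_t\|^2$, which is only weakly lower-semicontinuous, and a \emph{compact part} $\|u\|^2+\|u\|^2_{L^2(\Gamma)}$ that converges strongly under weak convergence, because in dimension three the embedding $H^1(\Omega)\hookrightarrow L^2(\Omega)$ and, via the trace, $H^1(\Omega)\to H^{1/2}(\Gamma)\hookrightarrow L^2(\Gamma)$ are both compact. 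Likewise $\int_\Omega F(u_n)\,\diff x\to\int_\Omega F(\xi_u)\,\diff x$, since $|F(s)|\le C|s|(1+|s|^3)$ and $u_n\to\xi_u$ strongly in $L^p(\Omega)$ for every $p<6$ (a uniform-integrability/Vitali argument). Consequently, strong convergence $\varphi_n\to\xi$ is equivalent to the single energy matching $\lim_n\mathcal{E}(\varphi_n)=\mathcal{E}(\xi)$, where $\mathcal{E}(\varphi):=\|\varphi\|^2_{\mathcal{H}_0}+2\int_\Omega F(u)\,\diff x$ is the functional governed by the energy equation (\ref{Robin-energy-3}); weak lower-semicontinuity already gives $\mathcal{E}(\xi)\le\liminf_n\mathcal{E}(\varphi_n)$, so the whole issue reduces to the reverse inequality $\limsup_n\mathcal{E}(\varphi_n)\le\mathcal{E}(\xi)$.

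To obtain this reverse inequality I would use the weak continuity established above to construct, by a diagonal extraction over the backward-shifted orbits $S_0(t_n-m)\varphi_{0,n}\in\mathcal{B}_0$, a complete bounded trajectory $\eta:(-\infty,0]\to\mathcal{H}_0$ with $\eta(0)=\xi$ and $S_0(t_n+s)\varphi_{0,n}\rightharpoonup\eta(s)$ for each $s\le0$. Applying the energy equality (\ref{Robin-energy-3}) both to $\varphi_n$ on $[-m,0]$ and to the limit trajectory $\eta$, and combining the weak lower-semicontinuity of the dissipation integral $\int\|u_t\|^2$ with the uniform boundedness of $\mathcal{E}(\eta(-m))$ on $\mathcal{B}_0$ in the spirit of the Ball--Rosa backward-trajectory argument, one shows that the contribution of the data at time $t_n-m$ becomes negligible as $m\to\infty$ while every remaining term is lower-order and passes to the limit, giving $\limsup_n\mathcal{E}(\varphi_n)\le\mathcal{E}(\xi)$. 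This energy matching—defeating the non-compactness of the principal part of the norm by the energy structure rather than by any smoothing estimate—is exactly where I expect the main difficulty to lie. With asymptotic compactness secured, $\mathcal{A}_0=\omega(\mathcal{B}_0)$ together with its invariance, attraction, uniqueness, and maximality follows directly from the abstract theorem of \cite{Ball04}.
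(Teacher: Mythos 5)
Your overall route is the same as the paper's: the paper likewise assembles the bounded absorbing set $\mathcal{B}_0$ of (\ref{abss-0}), the weak continuity of $S_0$ (Lemma \ref{t:Robin-weakly-continuous}, citing \cite[Theorem 3.6]{Ball04}), and asymptotic compactness (Lemma \ref{t:Robin-asymptotic-compactness}, citing \cite[Proposition 2]{Frigeri10}), and then concludes via \cite[Theorem 3.3]{Ball00}; your proposal essentially unpacks those citations. Most of your sketch is sound: the splitting of the $\mathcal{H}_0$-norm into a weakly lower-semicontinuous principal part $\|\nabla u\|^2+\|u_t\|^2$ and compactly convergent lower-order terms (including the trace term via $H^1(\Omega)\rightarrow H^{1/2}(\Gamma)\hookrightarrow\hookrightarrow L^2(\Gamma)$), the convergence of $\int_\Omega F(u_n)\,\diff x$, the reduction of strong convergence to energy matching, and the diagonal construction of a complete bounded backward trajectory all match the cited arguments.

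However, the decisive step fails as you wrote it. The raw energy identity (\ref{Robin-energy-3}) reads $\frac{\diff}{\diff t}\mathcal{E}+2\|u_t\|^2=0$ and carries no decay factor: integrating along $\varphi_n$ on $[-m,0]$ gives $\mathcal{E}(\varphi_n(0))=\mathcal{E}(\varphi_n(-m))-2\int_{-m}^0\|u_{n,t}(s)\|^2\,\diff s$, and the data term $\mathcal{E}(\varphi_n(-m))$ is merely \emph{bounded} on $\mathcal{B}_0$ --- it does not ``become negligible as $m\to\infty$,'' and the weak lower-semicontinuity of the dissipation integral points the wrong way for the desired $\limsup$ inequality, leaving a gap of fixed size independent of $m$. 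The missing idea --- and what \cite{Ball04} and \cite{Frigeri10} actually use --- is a perturbed functional such as $E_0$ in (\ref{Robin-functional-0}) (equivalently, Ball's substitution $v=u_t+\eta u$; compare the multiplier used in (\ref{Robin-absorbing-set-1})), which satisfies an identity of the form $\frac{\diff}{\diff t}E_0(\varphi)+\kappa E_0(\varphi)=\Lambda(\varphi)$ with $\kappa>0$ and $\Lambda$ consisting only of lower-order terms that are sequentially weakly (in fact weak-to-strong) continuous along solutions. The variation-of-constants form $E_0(\varphi_n(0))=e^{-\kappa m}E_0(\varphi_n(-m))+\int_{-m}^0 e^{\kappa s}\Lambda(\varphi_n(s))\,\diff s$ then makes the backward data exponentially small uniformly in $n$, while the integral passes to the limit trajectory $\eta$; since the cross term $2\mu_0\langle u_n,v_n\rangle$ also converges by the weak--strong pairing ($u_n\to\xi_u$ strongly in $L^2(\Omega)$, $v_n\rightharpoonup\xi_v$), matching of $E_0$ still upgrades weak to strong convergence in $\mathcal{H}_0$. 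With this correction, your argument is precisely the one the paper outsources to its references.
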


In order to prove Theorem \ref{t:robin-global}, we now develop further important properties of the semiflow $S_0$; the semiflow is weakly continuous and asymptotically compact. 
Both properties utilize only assumptions (\ref{f-assumption-1}) and (\ref{f-assumption-2}) on the nonlinear term $f(u)$. 
Consequently, the method of decomposing the semiflow into decay and compact parts to deduce the existence of the global attractor cannot be applied. 
A different approach is taken to deduce the existence of a global attractor; whereby the semiflow is shown to be weakly continuous and asymptotically compact (see \cite{Frigeri10}). 
Since the semiflow admits a bounded absorbing set, it follows from the theory of generalized semiflows by Ball (cf. \cite{Ball00,Ball04}) that the semiflow $S_0$ also admits a global attractor in the phase space $\mathcal{H}_0$.

The first property follows from the general result in \cite[Theorem 3.6]{Ball04}.

\begin{lemma}  \label{t:Robin-weakly-continuous}
The semiflow $S_0$ is weakly continuous on $\mathcal{H}_0$; i.e., for each $t\geq 0$,
\[
S_0(t)\varphi_{0n}\rightharpoonup S_0(t)\varphi_0 \ \text{in} \ \mathcal{H}_0 ~\text{when}~\varphi_{0n}\rightharpoonup \varphi_0 \ \text{in} \ \mathcal{H}_0.
\]
\end{lemma}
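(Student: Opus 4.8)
The plan is to follow the route of \cite[Theorem 3.6]{Ball04}: fix $t\geq 0$ and a sequence $\varphi_{0n}\rightharpoonup\varphi_0$ in $\mathcal{H}_0$, extract a weakly convergent subsequence of the solutions, identify its limit as the unique weak solution emanating from $\varphi_0$, and then use uniqueness to upgrade subsequential convergence to convergence of the full sequence. First I would record the boundedness that drives everything. A weakly convergent sequence is bounded, so $R:=\sup_n\|\varphi_{0n}\|_{\mathcal{H}_0}<\infty$. By the a priori bound (\ref{Robin-bound-1}) in Theorem \ref{t:Robin-weak-solutions}, the solutions $\varphi_n(\cdot):=S_0(\cdot)\varphi_{0n}=(u_n,u_{n,t})$ satisfy $\sup_n\sup_{\tau\in[0,T]}\|\varphi_n(\tau)\|_{\mathcal{H}_0}\leq CR$ for every $T>0$. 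In particular $u_n$ is bounded in $L^\infty(0,T;H^1(\Omega))$ and $u_{n,t}$ in $L^\infty(0,T;L^2(\Omega))$; by the growth condition (\ref{f-assumption-1}) (whence $|f(s)|\leq C(1+|s|^3)$) and the embedding $H^1(\Omega)\hookrightarrow L^6(\Omega)$, the term $f(u_n)$ is bounded in $L^\infty(0,T;L^2(\Omega))$, so $\mathcal{F}(\varphi_n)$ is bounded in $L^\infty(0,T;\mathcal{H}_0)$.

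Next I would obtain weak convergence of $\varphi_n(t)$ for each fixed $t$. For $\theta\in D(\mathrm{R}^\ast)$ the map $t\mapsto\langle\varphi_n(t),\theta\rangle_{\mathcal{H}_0}$ is absolutely continuous with derivative $\langle\varphi_n,\mathrm{R}^\ast\theta\rangle_{\mathcal{H}_0}+\langle\mathcal{F}(\varphi_n),\theta\rangle_{\mathcal{H}_0}$, which by the first step is bounded uniformly in $n$ and $t$; hence these scalar maps are uniformly bounded and equi-Lipschitz on $[0,T]$. Using the density of $D(\mathrm{R}^\ast)$ in $\mathcal{H}_0$ (Lemma \ref{adjoint-r}) together with the uniform $\mathcal{H}_0$-bound, an Arzel\`a--Ascoli and diagonal argument produces a subsequence (not relabeled) and a limit $\varphi\in L^\infty(0,T;\mathcal{H}_0)$, weakly continuous in time, such that $\varphi_n(t)\rightharpoonup\varphi(t)$ in $\mathcal{H}_0$ for every $t\in[0,T]$.

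It then remains to identify $\varphi$ as $S_0(\cdot)\varphi_0$. I would pass to the limit in the weak formulation (\ref{abs-1}) written for $\varphi_n$: the linear terms converge by the weak convergence just obtained, and $\varphi_n(0)=\varphi_{0n}\rightharpoonup\varphi_0$ forces $\varphi(0)=\varphi_0$. For the nonlinearity, since $u_n(s)\rightharpoonup u(s)$ in $H^1(\Omega)$ for each $s$, the sequential weak continuity of $f:H^1(\Omega)\to L^2(\Omega)$ (valid under (\ref{f-assumption-1})--(\ref{f-assumption-2}), cf. \cite[Lemma 3.3]{Ball04}) gives $f(u_n(s))\rightharpoonup f(u(s))$ in $L^2(\Omega)$ for each $s$; combined with the uniform $L^2$-bound from the first step and dominated convergence, one passes to the limit in the time integral of the nonlinear term. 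Thus $\varphi$ is a weak solution of (\ref{abstract-Robin-problem}) on $[0,T]$ with datum $\varphi_0$, and the uniqueness in Theorem \ref{t:Robin-weak-solutions} yields $\varphi=S_0(\cdot)\varphi_0$. Since the limit does not depend on the extracted subsequence, the full sequence satisfies $S_0(t)\varphi_{0n}\rightharpoonup S_0(t)\varphi_0$ in $\mathcal{H}_0$ for each $t\geq 0$.

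I expect the main obstacle to be the passage to the limit in the nonlinear term, i.e. verifying the sequential weak continuity of $f$ from $H^1(\Omega)$ to $L^2(\Omega)$ and justifying the limit under the time integral. This hinges on the subcritical growth (\ref{f-assumption-1}), which keeps $f(u_n)$ bounded in $L^2(\Omega)$, together with the compact embedding $H^1(\Omega)\hookrightarrow\hookrightarrow L^p(\Omega)$ for $p<6$, which upgrades weak $H^1$-convergence to strong $L^p$ (and hence a.e.) convergence; this is precisely the content invoked from \cite[Lemma 3.3]{Ball04}. Everything else reduces to the uniform energy bound and the Arzel\`a--Ascoli argument in the weak topology.
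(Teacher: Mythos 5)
Your proposal is correct and takes essentially the same approach as the paper, whose entire proof of this lemma is the citation to \cite[Theorem 3.6]{Ball04}; what you have written is a faithful reconstruction of Ball's argument (uniform energy bound via (\ref{Robin-bound-1}), equi-Lipschitz scalar maps $t\mapsto\langle\varphi_n(t),\theta\rangle$ for $\theta\in D(\mathrm{R}^{\ast})$ plus Arzel\`a--Ascoli and density, sequential weak continuity of $f$ from \cite[Lemma 3.3]{Ball04}, identification of the limit, and uniqueness to get full-sequence convergence). The one step worth making explicit is that the Arzel\`a--Ascoli limit $\varphi$ is a priori only \emph{weakly} continuous in time, whereas Definition 2.2 of weak solution requires $\varphi\in C([0,T];\mathcal{H}_0)$; this is repaired in one line by the weak/mild equivalence (\cite[Proposition 3.5]{Ball04}, already quoted in the paper), since $\mathcal{F}(\varphi(\cdot))\in L^1(0,T;\mathcal{H}_0)$ forces $\varphi$ to coincide with the (norm-continuous) variation-of-constants formula, after which uniqueness applies as you state.
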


\begin{lemma}  \label{t:Robin-asymptotic-compactness}
The semiflow $S_0$ is asymptotically compact in $\mathcal{H}_0$; i.e., if $\varphi_{0n}=(u_{0n},u_{1n})$ is any bounded sequence in $\mathcal{H}_0$ and if $t_n$ is any sequence such that $t_n\rightarrow\infty$ as $n\rightarrow \infty$, then the sequence $\varphi^{(n)}(t_n) = S_0(t_n)\varphi_{0n}$ has a convergent subsequence.
\end{lemma}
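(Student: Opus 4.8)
The plan is to follow Ball's energy-equation method for generalized semiflows \cite{Ball00,Ball04} (as adapted in \cite{Frigeri10}), using the bounded absorbing set $\mathcal{B}_0$ of (\ref{abss-0}), the weak continuity of $S_0$ from Lemma \ref{t:Robin-weakly-continuous}, and the energy equality (\ref{Robin-energy-3}). First I would reduce to the case $\varphi_{0n}\in\mathcal{B}_0$: writing $S_0(t_n)\varphi_{0n}=S_0(t_n/2)[S_0(t_n/2)\varphi_{0n}]$ and using that $S_0(t_n/2)\varphi_{0n}\in\mathcal{B}_0$ for $n$ large (absorption), I may replace $\varphi_{0n}$ by a sequence lying in $\mathcal{B}_0$ while still having $t_n\to\infty$. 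The sequence $S_0(t_n)\varphi_{0n}$ is then bounded in $\mathcal{H}_0$, so along a subsequence $S_0(t_n)\varphi_{0n}\rightharpoonup\zeta$ weakly in $\mathcal{H}_0$ for some $\zeta=(u_\infty,v_\infty)$. Since $\mathcal{H}_0$ is a Hilbert space, weak convergence together with convergence of norms yields strong convergence; thus it suffices to prove $\|S_0(t_n)\varphi_{0n}\|_{\mathcal{H}_0}\to\|\zeta\|_{\mathcal{H}_0}$, and by weak lower semicontinuity of the norm only the bound $\limsup_n\|S_0(t_n)\varphi_{0n}\|_{\mathcal{H}_0}\le\|\zeta\|_{\mathcal{H}_0}$ remains.

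Next I would isolate the lower-order terms that behave compactly. Writing $S_0(t_n)\varphi_{0n}=(u^{(n)}(t_n),u^{(n)}_t(t_n))$, the weak convergence gives $u^{(n)}(t_n)\rightharpoonup u_\infty$ in $H^1(\Omega)$. Because $\Omega\subset\mathbb{R}^3$, the embeddings $H^1(\Omega)\hookrightarrow L^2(\Omega)$ and $H^1(\Omega)\hookrightarrow L^4(\Omega)$ are compact, as is the trace map $H^1(\Omega)\to L^2(\Gamma)$; combined with the bound $|F(s)|\le C(1+|s|^4)$ coming from (\ref{f-assumption-1}), this makes the quantities $\|u^{(n)}(t_n)\|^2$, $\|u^{(n)}(t_n)\|^2_{L^2(\Gamma)}$ and $\int_\Omega F(u^{(n)}(t_n))\,\diff x$ converge to their values at $u_\infty$. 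Consequently, writing $\mathcal{E}(\varphi):=\|\varphi\|^2_{\mathcal{H}_0}+2\int_\Omega F(u)\,\diff x$ for the energy appearing in (\ref{C1-map})--(\ref{Robin-energy-3}), the desired norm bound is equivalent to the single energy estimate $\limsup_n\mathcal{E}(S_0(t_n)\varphi_{0n})\le\mathcal{E}(\zeta)$, since every term other than $\|\nabla u^{(n)}(t_n)\|^2+\|u^{(n)}_t(t_n)\|^2$ already passes to the limit.

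To obtain this energy estimate I would construct a complete bounded trajectory through $\zeta$. For each fixed integer $k\ge0$ the sequence $S_0(t_n-k)\varphi_{0n}$ (defined for $n$ large) lies in $\mathcal{B}_0$, so by a diagonal extraction I may assume $S_0(t_n-k)\varphi_{0n}\rightharpoonup\zeta_k$ weakly in $\mathcal{H}_0$ for every $k$, with $\zeta_0=\zeta$; weak continuity (Lemma \ref{t:Robin-weakly-continuous}) then forces $S_0(k)\zeta_k=\zeta$. Applying the energy equality (\ref{Robin-energy-3}) on $[t_n-k,t_n]$ for the sequence and on $[0,k]$ for the limiting trajectory $s\mapsto S_0(s)\zeta_k$, and invoking the monotonicity of $\mathcal{E}$ together with the weak lower semicontinuity of the dissipation $\int\|u_t\|^2$ and of $\mathcal{E}$ (and the weak continuity of $\int_\Omega F$), one transports the energy balance from the backward time $t_n-k$ to the terminal time $t_n$.

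The hard part will be exactly this weak-to-strong upgrade. Weak lower semicontinuity controls both $\mathcal{E}$ and the dissipation integral from below, i.e. in the direction opposite to the $\limsup$ bound one needs, so the estimate does not close from a single backward step; one must let the horizon $k\to\infty$ and exploit the rigid linkage that the energy equality imposes between the defect of the norm and the defect of the dissipation in order to force these defects to vanish. The critical (cubic) growth of $f$ is what obstructs the more elementary alternative of proving $f(u^{(n)})\to f(u_\infty)$ strongly (the embedding $H^1(\Omega)\hookrightarrow L^6(\Omega)$ is not compact), which is precisely why the weakly continuous structure of $\int_\Omega F$ and the energy balance must carry the argument. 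This is the setting of Ball's theory \cite{Ball00,Ball04} (cf. \cite{Frigeri10}), and with the absorbing set $\mathcal{B}_0$ and the weak continuity of $S_0$ already in hand, the asymptotic compactness follows.
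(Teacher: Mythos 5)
Your architecture is the right one---it is precisely the Ball energy-equation scheme that the paper's own proof invokes by citing \cite[Proposition 2]{Frigeri10}: reduction to the absorbing set $\mathcal{B}_0$ via absorption and positive invariance, diagonal extraction of backward weak limits $S_0(t_n-k)\varphi_{0n}\rightharpoonup\zeta_k$ with $S_0(k)\zeta_k=\zeta$ through Lemma \ref{t:Robin-weakly-continuous}, compact convergence of the lower-order quantities $\|u\|^2$, $\|u\|^2_{L^2(\Gamma)}$, $\int_\Omega F(u)\,\diff x$ (using $H^1(\Omega)\hookrightarrow L^4(\Omega)$, the trace, and $|F(s)|\le C(1+|s|^4)$), and the weak-plus-norm-convergence upgrade in the Hilbert space $\mathcal{H}_0$. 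But there is a genuine gap exactly where you flag ``the hard part'': with the plain energy $\mathcal{E}(\varphi)=\|\varphi\|^2_{\mathcal{H}_0}+2\int_\Omega F(u)\,\diff x$ of (\ref{Robin-energy-3}), which satisfies only $\frac{\diff}{\diff t}\mathcal{E}=-2\|u_t\|^2$, the argument does not close, and no ``rigid linkage of defects'' rescues it. Passing to the limit in $\mathcal{E}(t_n)=\mathcal{E}(t_n-k)-2\int_{t_n-k}^{t_n}\|u_t\|^2\,\diff s$ (the dissipation term has the favorable sign, by weak lower semicontinuity in $L^2(0,k;L^2(\Omega))$) gives
\[
\limsup_n \mathcal{E}(S_0(t_n)\varphi_{0n}) \le \mathcal{E}(\zeta) + \bigl( \limsup_n \mathcal{E}(S_0(t_n-k)\varphi_{0n}) - \mathcal{E}(\zeta_k) \bigr),
\]
and the bracketed backward defect is nonnegative (weak lower semicontinuity again) but carries no decay factor in $k$. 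Monotonicity of $\mathcal{E}$ along trajectories only shows that $\limsup_n\mathcal{E}(S_0(t_n-k)\varphi_{0n})$ and $\mathcal{E}(\zeta_k)$ each converge as $k\to\infty$ to limits $e_\infty\ge\mathcal{E}_\infty$; the statement $e_\infty=\mathcal{E}_\infty$ is essentially equivalent to the compactness you are trying to prove, since a weakly but not strongly convergent ``escaping'' sequence produces a persistent defect. So letting the horizon $k\to\infty$, by itself, forces nothing.

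The missing device---used by Ball \cite{Ball04} and in \cite[Proposition 2]{Frigeri10}, and already implicit in the paper's functional $E_0$ of (\ref{Robin-functional-0})---is a perturbed energy with an exponential weight. Testing (\ref{damped-wave-equation}) with $u_t+u$, i.e.\ adding to the energy identity the identity for $\frac{\diff}{\diff t}\bigl[\langle u,u_t\rangle+\frac12\|u\|^2\bigr]$, one finds for $J(\varphi):=\mathcal{E}(\varphi)+\langle u,v\rangle+\frac12\|u\|^2$ that
\[
\frac{\diff}{\diff t}J + J = 2\int_\Omega F(u)\,\diff x - \langle f(u),u \rangle + \langle u,u_t \rangle + \frac12\|u\|^2,
\]
where every term on the right is weakly continuous on bounded subsets of $\mathcal{H}_0$ (compactness of $H^1(\Omega)\hookrightarrow L^4(\Omega)$ and of the trace, together with the cubic growth of $f$ from (\ref{f-assumption-1})). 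The variation-of-constants formula $J(t_n)=e^{-k}J(t_n-k)+\int_0^k e^{s-k}\,h(t_n-k+s)\,\diff s$ then annihilates the backward term as $k\to\infty$ using only the uniform bound on $\mathcal{B}_0$, while the integral converges (dominated convergence plus weak continuity of $S_0$ at each fixed time) to the corresponding expression for the limit trajectory through $\zeta_k$; this yields $\limsup_n J(S_0(t_n)\varphi_{0n})\le J(\zeta)$ with no residual defect. Since $J$ is the sum of a weakly lower semicontinuous quadratic part and weakly continuous terms, $J(S_0(t_n)\varphi_{0n})\to J(\zeta)$, the norms converge, and strong convergence follows. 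With this one substitution your proof is complete and coincides with the argument the paper imports from \cite{Frigeri10}.
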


\begin{proof}
The proof essentially follows from \cite[Proposition 2]{Frigeri10}. 
Indeed, the proof needed here is much simpler because of the (static) Robin boundary condition. 
\end{proof}

The existence of a global attractor for the dynamical system $(S_0(t),\mathcal{H}_0)$ now follows from \cite[Theorem 3.3]{Ball00}.

\subsection{Optimal regularity for $\mathcal{A}_0$}

In this section we report the result concerning the optimal regularity for the global attractor associated with Problem (R). 
It is at this point where we need to assume further smoothness from the nonlinear term $f$. 
Moving forward, we assume (\ref{f-assumption-2}) and (\ref{f-reg-ass-2})-(\ref{f-reg-ass-3}).
With these assumptions, we find the asymptotic compactness property for the weak solutions from \cite[Theorem 3.17]{Gal&Shomberg15}.

\begin{theorem}  \label{exp-attr-r-1} 
Assume (\ref{f-assumption-2}), and (\ref{f-reg-ass-2})-(\ref{f-reg-ass-3}) hold.
There exists a closed and bounded subset $\mathcal{U}_0\subset \mathcal{D}_0$, such that for every nonempty bounded subset $B\subset \mathcal{H}_0$,
\begin{equation}
\mathrm{dist}_{\mathcal{H}_0}(S_0(t)B,\mathcal{U}_0) \le Q(\left\Vert B\right\Vert _{\mathcal{H}_0}) e^{-\omega_0 t},  \label{tran-1}
\end{equation}
for some constant $\nu_0>0$.
\end{theorem}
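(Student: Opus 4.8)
The plan is to produce an exponentially-attracting bounded set inside the smoother space $\mathcal{D}_0$ by the time-differentiation (Pata--Zelik) method combined with elliptic regularity for the Robin--Laplacian, exactly the strategy advertised in the introduction. Writing $w=u_t$ and formally differentiating (\ref{damped-wave-equation}) in $t$ gives the linearized equation
\[
w_{tt}+w_t-\Delta w + w + f'(u)w = 0,
\]
with the (time-independent) boundary condition $\partial_{\mathbf{n}}w+w=0$. Setting $\psi=(w,w_t)$ and testing with $w_t+\eta w$ as in the dissipativity lemma, I would derive a differential inequality $\frac{\diff}{\diff t}\mathcal{E}_1 + c\,\mathcal{E}_1 \le C$ for a functional $\mathcal{E}_1$ equivalent to $\|\psi\|^2_{\mathcal{H}_0}=\|w\|_1^2+\|w\|^2_{L^2(\Gamma)}+\|w_t\|^2$. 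The sign bound (\ref{f-reg-ass-3}), $f'(u)\ge-\ell_2$, gives coercivity of $\langle f'(u)w,w\rangle$ up to a lower-order term, while the reaction contribution arising from $\frac{\diff}{\diff t}\langle f'(u)w,w\rangle$ produces $\tfrac12\langle f''(u)u_t\,w,w\rangle$, which I would control using the growth bound (\ref{f-reg-ass-2}) and the critical embedding $H^1(\Omega)\hookrightarrow L^6(\Omega)$ in three dimensions, absorbing it into the gradient terms.

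The delicate point is that the natural datum $w_t(0)=\Delta_{\mathrm{R}}u_0-u_0-u_1-f(u_0)$ for the differentiated equation is \emph{not} controlled by $\|\varphi_0\|_{\mathcal{H}_0}$, so a direct Gr\"onwall argument on $\mathcal{E}_1$ fails. I would remedy this by the standard $t$-weight trick: showing instead that $t\mapsto t\,\mathcal{E}_1(t)$ satisfies a differential inequality with data-independent forcing, so that $\mathcal{E}_1(1)$ is bounded solely in terms of $\|B\|_{\mathcal{H}_0}$ (via the absorbing bound (\ref{Robin-bound-1}) and the dissipation integral $\int_0^\infty\|u_t\|^2\,\diff\tau$ read off from the energy equation (\ref{Robin-energy-3})). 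From $t=1$ onward the unweighted inequality above then yields exponential decay of $\mathcal{E}_1$ toward a fixed bounded set; in particular $u_t$ is asymptotically bounded in $H^1$ and $u_{tt}$ in $L^2$, with rate $e^{-\omega_0 t}$.

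To close the regularity I would rewrite the equation as the elliptic problem $-\Delta u + u = -u_{tt}-u_t-f(u)=:h$ with $\partial_{\mathbf{n}}u+u=0$. The right-hand side $h$ is bounded in $L^2(\Omega)$: the terms $u_{tt},u_t$ by the previous step and $f(u)$ by the growth bound (\ref{f-assumption-1}) together with $u\in H^1\hookrightarrow L^6$. Elliptic regularity for the self-adjoint, strictly positive operator $-\Delta_{\mathrm{R}}$, whose domain embeds into $H^2(\Omega)$, then gives $\|u\|_2\le C(\|h\|+\|u\|)$, so $u$ lies in a bounded subset of $D(\Delta_{\mathrm{R}})$. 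Taking $\mathcal{U}_0$ to be the closed ball of $\mathcal{D}_0$ of the radius produced above (closed and bounded in $\mathcal{D}_0$), the combined decay rates give the asserted estimate $\dist_{\mathcal{H}_0}(S_0(t)B,\mathcal{U}_0)\le Q(\|B\|_{\mathcal{H}_0})e^{-\omega_0 t}$.

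I expect the crux to be the second step: the higher-order datum $w_t(0)$ sees $\Delta_{\mathrm{R}}u_0$, which is uncontrolled on $\mathcal{H}_0$, so the entire estimate hinges on the $t$-weighting (or an equivalent short-time smoothing exploiting the dissipation integral) to convert this singular initial behavior into a bound depending only on $\|B\|_{\mathcal{H}_0}$. A secondary technical obstacle is keeping the reaction term $\langle f''(u)u_t\,w,w\rangle$ subcritical in $\mathbb{R}^3$, which is precisely why the stronger hypotheses (\ref{f-reg-ass-2})--(\ref{f-reg-ass-3}) must replace the weaker growth condition (\ref{f-assumption-1}) used for the mere existence of $\mathcal{A}_0$.
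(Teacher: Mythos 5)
There is a genuine gap at exactly the step you flag as the crux: the $t$-weighting trick cannot work here, because the damped wave equation has no smoothing. The operator $\mathrm{R}$ is a bounded perturbation of the generator of the wave group, so the linear evolution extends to a group and the nonlinear flow is locally invertible in time; consequently $S_0(1)$ cannot map a ball of $\mathcal{H}_0$ into a bounded subset of $\mathcal{D}_0$. Your claim that $\mathcal{E}_1(1)$ is bounded solely in terms of $\|B\|_{\mathcal{H}_0}$ asserts precisely such instantaneous smoothing; for generic weak data one has $\mathcal{E}_1(t)=\infty$ for \emph{every} $t$, since singularities of $(u_0,u_1)$ propagate and are only damped, never removed. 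Mechanically, the trick fails because $\frac{\diff}{\diff t}\bigl(t\,\mathcal{E}_1\bigr)=\mathcal{E}_1+t\frac{\diff}{\diff t}\mathcal{E}_1$ requires an a priori bound on $\int_0^1\mathcal{E}_1(\tau)\,\diff\tau$; in the parabolic setting this is supplied by the basic energy identity, but here (\ref{Robin-energy-3}) controls only $\int_0^\infty\|u_t\|^2\,\diff\tau$, a quantity of strictly lower order than $\mathcal{E}_1\sim\|u_t\|_1^2+\|u_t\|^2_{L^2(\Gamma)}+\|u_{tt}\|^2$. This is also why the theorem asserts only exponential attraction in (\ref{tran-1}) and not the absorption your argument would deliver: absorption of $\mathcal{H}_0$-bounded sets into a bounded subset of $\mathcal{D}_0$ is impossible for this hyperbolic problem, so any proof yielding it must be wrong.

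The repair --- and the route the paper actually takes (its proof of Theorem \ref{exp-attr-r-1} is a citation to \cite[Theorem 3.17]{Gal&Shomberg15}, whose mechanism is displayed in this paper for Problem (A) in (\ref{pde-v})--(\ref{pde-w})) --- is to apply your time-differentiation and elliptic-regularity estimates not to $S_0$ itself but to the compact part of a splitting. With $\psi(s)=f(s)+\beta s$, $\beta\geq\ell_2$ as in (\ref{beta}), one decomposes the trajectory into a part carrying the initial data $\varphi_0$ and the monotone difference $\psi(u)-\psi(w)$, which decays exponentially to zero in $\mathcal{H}_0$, plus a part solving $w_{tt}+w_t-\Delta w+w+\psi(w)=\beta u$ with \emph{zero} initial data. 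Because this second component starts from rest, the troublesome higher-order datum is harmless ($w_{tt}(0)=-f(0)+\beta u_0$ is bounded in $L^2(\Omega)$; it never sees $\Delta_{\mathrm{R}}u_0$), so your differentiated-energy estimate, using (\ref{f-reg-ass-2})--(\ref{f-reg-ass-3}) and $H^1(\Omega)\hookrightarrow L^6(\Omega)$, closes on it, and your elliptic step ($-\Delta w+w=h$ with the Robin condition) then bounds it in $\mathcal{D}_0$ uniformly in $t$ and in $\|\varphi_0\|_{\mathcal{H}_0}\leq R$. Taking $\mathcal{U}_0$ to be the corresponding closed ball of $\mathcal{D}_0$, the semidistance in (\ref{tran-1}) is dominated by the $\mathcal{H}_0$-norm of the decaying component, giving the rate $e^{-\omega_0 t}$. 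In short, your first and third steps are the right ingredients and coincide with the cited method of \cite{Pata&Zelik06}; the object they are applied to must be the zero-data component of the decomposition, not the full solution, and once that change is made the $t$-weighting is unnecessary.
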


By more standard arguments in the theory of attractors (see, e.g., \cite{Hale88,
Temam88}), it follows that the global attractor $\mathcal{A}_0 \subset \mathcal{U}_0$ for the semigroup $S_0(t)$ is bounded in $\mathcal{D}_0$.

\begin{corollary}
The global attractor admitted by $S_0$ for Problem (R) satisfies 
\begin{equation*}
\mathcal{A}_0\subset\mathcal{U}_0.
\end{equation*}
Consequently, the global attractor $\mathcal{A}_0$ is bounded in $\mathcal{D}_0$ and consists only of strong solutions. 
\end{corollary}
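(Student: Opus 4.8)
The plan is to exploit the strict invariance of the global attractor together with the exponential attraction property of $\mathcal{U}_0$ furnished by Theorem \ref{exp-attr-r-1}. The key observation is that $\mathcal{A}_0$ is itself a bounded subset of $\mathcal{H}_0$ which is invariant under $S_0$, so substituting $\mathcal{A}_0$ into the transitivity estimate (\ref{tran-1}) yields a semidistance that is simultaneously independent of $t$ and decaying to zero.

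First I would recall from Theorem \ref{t:robin-global} that $\mathcal{A}_0$ is compact in $\mathcal{H}_0$ and satisfies $S_0(t)\mathcal{A}_0=\mathcal{A}_0$ for every $t\ge 0$. In particular $\mathcal{A}_0$ is a nonempty bounded subset of $\mathcal{H}_0$, hence admissible as the set $B$ in the attraction estimate (\ref{tran-1}). Applying (\ref{tran-1}) with $B=\mathcal{A}_0$ and using invariance to replace $S_0(t)\mathcal{A}_0$ by $\mathcal{A}_0$, I obtain
\[
\mathrm{dist}_{\mathcal{H}_0}(\mathcal{A}_0,\mathcal{U}_0)=\mathrm{dist}_{\mathcal{H}_0}(S_0(t)\mathcal{A}_0,\mathcal{U}_0)\le Q(\|\mathcal{A}_0\|_{\mathcal{H}_0})\,e^{-\omega_0 t}
\]
for all $t\ge 0$. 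The left-hand side is constant in $t$, while the right-hand side tends to $0$ as $t\to\infty$; therefore $\mathrm{dist}_{\mathcal{H}_0}(\mathcal{A}_0,\mathcal{U}_0)=0$. Since $\mathcal{U}_0$ is closed in $\mathcal{H}_0$, the vanishing of the Hausdorff semidistance forces every point of $\mathcal{A}_0$ into $\mathcal{U}_0$, i.e. $\mathcal{A}_0\subset\mathcal{U}_0$.

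Finally, because $\mathcal{U}_0\subset\mathcal{D}_0$ is bounded in $\mathcal{D}_0$ by Theorem \ref{exp-attr-r-1}, the inclusion $\mathcal{A}_0\subset\mathcal{U}_0$ immediately gives that $\mathcal{A}_0$ is bounded in $\mathcal{D}_0$. To see that $\mathcal{A}_0$ consists only of strong solutions, I would again invoke invariance: any complete trajectory lying on $\mathcal{A}_0$ remains inside $\mathcal{D}_0$ for all time, so by Theorem \ref{t:Robin-weak-solutions} (which produces strong solutions for $\mathcal{D}_0$-data under (\ref{f-assumption-2}), (\ref{f-reg-ass-2})--(\ref{f-reg-ass-3})) together with Definition \ref{Robin-strong}, it enjoys the strong-solution regularity (\ref{regularity-property}). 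I do not expect a genuine obstacle here; the only step requiring care is the passage from a vanishing semidistance to an honest set inclusion, which relies precisely on the closedness of $\mathcal{U}_0$ in $\mathcal{H}_0$.
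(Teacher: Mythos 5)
Your proposal is correct and is precisely the argument the paper relies on: the paper dispatches this corollary by citing ``standard arguments in the theory of attractors'' (Hale, Temam), and those standard arguments are exactly your combination of the invariance $S_0(t)\mathcal{A}_0=\mathcal{A}_0$ from Theorem \ref{t:robin-global} with the exponential attraction estimate (\ref{tran-1}) applied to $B=\mathcal{A}_0$, letting $t\to\infty$, and using the closedness of $\mathcal{U}_0$ to convert the vanishing semidistance into the inclusion $\mathcal{A}_0\subset\mathcal{U}_0$. The remaining conclusions (boundedness in $\mathcal{D}_0$ and strong-solution regularity on the attractor via invariance and Theorem \ref{t:Robin-weak-solutions}) follow just as you state.
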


\subsection{Exponential attractor for Problem (R)}

The existence of an exponential attractor depends on certain properties of the semigroup; namely, the smoothing property for the difference of any two trajectories and the existence of a more regular bounded absorbing set in the phase space.

The existence of exponential attractors for Problem (R) follows directly from \cite{Gal&Shomberg15}. 
Indeed, Theorem \ref{ear} (below) was shown to hold for (\ref{damped-wave-equation}), under assumptions (\ref{f-assumption-2}) and (\ref{f-reg-ass-2})-(\ref{f-reg-ass-3}), with the dynamic boundary condition (\ref{DBC}).
Moreover, the existence of the bounded absorbing set $\mathcal{B}_{0}^1\subset \mathcal{D}_0$ for the limit semigroup $S_{0}(t)$ associated with Problem (R) was established in \cite[Lemma 4.6]{Gal&Shomberg15}.

\begin{theorem}  \label{ear} 
Assume (\ref{f-assumption-2}), and (\ref{f-reg-ass-2})-(\ref{f-reg-ass-3}) hold.
The dynamical system $\left( S_0,\mathcal{H}_0\right)$ associated with Problem (R) admits an exponential attractor $\mathcal{M}_0$ compact in $\mathcal{H}_0$, and bounded in $\mathcal{U}_0$. 
Moreover, there hold:

(i) For each $t\geq 0$, $S_0(t)\mathcal{M}_0\subseteq \mathcal{M}_0$.

(ii) The fractal dimension of $\mathcal{M}_0$ with respect to the metric $\mathcal{H}_0$ is finite, namely,
\begin{equation*}
\dim_{\mathrm{F}}\left( \mathcal{M}_0,\mathcal{H}_0\right) \leq C<\infty,
\end{equation*}
for some positive constant $C$.

(iii) There exist $\omega_1>0$ and a nonnegative monotonically increasing function $Q(\cdot)$ such that, for all $t\geq 0$, 
\begin{equation}  \label{exp-attn-9}
\dist_{\mathcal{H}_0}(S_0(t)B,\mathcal{M}_0)\leq Q(\Vert B\Vert _{\mathcal{H}_0})e^{-\omega_1 t},
\end{equation}
for every nonempty bounded subset $B$ of $\mathcal{H}_0$.
\end{theorem}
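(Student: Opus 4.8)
The plan is to verify the hypotheses of the abstract exponential-attractor construction used in \cite{Gal&Shomberg15} (cf.\ also \cite{Pata&Zelik06}), which produces a finite-dimensional exponential attractor for the \emph{discrete} semigroup generated by $S_0(t_*)$ on a bounded, positively invariant, more regular subset of the phase space, and then lifts it to the continuous semiflow. Two ingredients are required: a bounded absorbing set lying in a space that embeds compactly into $\mathcal{H}_0$, and a \emph{smoothing} estimate for the difference of two trajectories issuing from that set.

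The first ingredient is already at hand. By the corollary following Theorem~\ref{exp-attr-r-1}, the global attractor obeys $\mathcal{A}_0\subset\mathcal{U}_0$ with $\mathcal{U}_0$ closed and bounded in $\mathcal{D}_0=D(\Delta_{\mathrm R})\times H^1(\Omega)$, and $\mathcal{D}_0\hookrightarrow\hookrightarrow\mathcal{H}_0$ compactly. Combined with the bounded absorbing set $\mathcal{B}_0^1\subset\mathcal{D}_0$ of \cite[Lemma 4.6]{Gal&Shomberg15}, which is positively invariant and absorbs bounded subsets of $\mathcal{H}_0$ into $\mathcal{D}_0$, the construction may be carried out on the positively invariant set $\mathcal{B}_0^1$ equipped with the $\mathcal{H}_0$-metric.

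The decisive ingredient is the smoothing property. Given $\varphi_0,\theta_0\in\mathcal{B}_0^1$ with corresponding strong solutions $\varphi=(u,u_t)$ and $\theta=(v,v_t)$, the difference $\bar\varphi=(z,z_t)$ solves the linear wave problem (\ref{dependence-1}) with source $f(v)-f(u)$ and homogeneous Robin data. For a fixed $t_*>0$ I would establish a decomposition
\[
S_0(t_*)\varphi_0 - S_0(t_*)\theta_0 = \Lambda(\varphi_0,\theta_0) + \Xi(\varphi_0,\theta_0),
\]
with $\|\Lambda(\varphi_0,\theta_0)\|_{\mathcal{H}_0}\le\tfrac{1}{2}\|\varphi_0-\theta_0\|_{\mathcal{H}_0}$ and $\|\Xi(\varphi_0,\theta_0)\|_{\mathcal{D}_0}\le C\|\varphi_0-\theta_0\|_{\mathcal{H}_0}$, the second factor being compact into $\mathcal{H}_0$. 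Following the time-differentiation/$H^2$-elliptic-regularity scheme of \cite{Pata&Zelik06}, I would differentiate (\ref{dependence-1})$_1$ in $t$ and run higher-order energy estimates on $(z_t,z_{tt})$: the coercive bound (\ref{f-reg-ass-3}) supplies the sign needed to close the dissipation, while the second-order growth condition (\ref{f-reg-ass-2}) together with the three-dimensional embedding $H^1(\Omega)\hookrightarrow L^6(\Omega)$ controls $f(u)-f(v)$ and its time derivative in the norms that arise; elliptic regularity for the self-adjoint, strictly positive operator $-\Delta_{\mathrm R}$ then upgrades the spatial regularity of $z$ so that $\Xi$ is estimated in $\mathcal{D}_0$.

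The main obstacle is exactly this estimate: bounding the nonlinear difference and its time derivative in the requisite strong norms \emph{uniformly} over $\mathcal{B}_0^1$, where the borderline three-dimensional Sobolev exponents make the control delicate and where the full force of (\ref{f-reg-ass-2})--(\ref{f-reg-ass-3}) is needed. Once the decomposition is in place, the abstract theorem yields a finite-dimensional exponential attractor $\mathcal{M}_0^{\mathrm d}$ for $S_0(t_*)$, bounded in $\mathcal{U}_0$. Setting $\mathcal{M}_0:=\bigcup_{t\in[0,t_*]}S_0(t)\mathcal{M}_0^{\mathrm d}$ and invoking the Lipschitz continuity (\ref{S0-Lipschitz-continuous}) in the initial datum together with the Lipschitz-in-time continuity of strong solutions (so that $(t,\varphi_0)\mapsto S_0(t)\varphi_0$ is Lipschitz on $[0,t_*]\times\mathcal{B}_0^1$), the standard lifting argument produces the continuous-time attractor: positive invariance (i) and the exponential attraction (iii) are inherited from the discrete construction and the absorption property, and finite fractal dimension (ii) is preserved under the Lipschitz lift. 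Since $\mathcal{A}_0$ is the maximal compact invariant set and $\mathcal{M}_0$ attracts all bounded sets, $\mathcal{A}_0\subseteq\mathcal{M}_0\subset\mathcal{U}_0$. Finally, because the Robin condition (\ref{Robin-boundary}) is static---so the boundary dynamics are simpler than for the dynamic condition (\ref{DBC}) treated in \cite{Gal&Shomberg15}---these estimates go through essentially verbatim, which is why the conclusion follows directly from that work.
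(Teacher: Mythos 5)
Your proposal is correct and follows essentially the same route as the paper: the paper likewise obtains Theorem~\ref{ear} by citing the abstract exponential-attractor result (\cite[Proposition 1]{EMZ00}, \cite{FGMZ04}, \cite{GGMP05}) together with \cite{Gal&Shomberg15}, where the positively invariant absorbing set $\mathcal{B}_0^1\subset\mathcal{D}_0$ (\cite[Lemma 4.6]{Gal&Shomberg15}) and the contraction-plus-smoothing decomposition via the time-differentiation/$H^2$-elliptic-regularity scheme of \cite{Pata&Zelik06} were already established for the dynamic boundary condition (\ref{DBC}), carrying over to the static Robin case. The only cosmetic difference is that the paper extends the basin of attraction from $\mathcal{B}_0^1$ to all bounded subsets of $\mathcal{H}_0$ via the transitivity of exponential attraction (Remark~\ref{rem_att} and Proposition~\ref{t:exp-attr}) using (\ref{tran-1}), which your absorption argument accomplishes equivalently.
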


The proof of Theorem \ref{ear} follows from the
application of an abstract result tailored specifically to our needs (see, e.g., \cite[Proposition 1]{EMZ00}, \cite{FGMZ04}, \cite{GGMP05}; cf. also Remark \ref{rem_att}\ below).

\begin{remark}  
Above,
\begin{equation*}
\dim_{\mathrm{F}}(\mathcal{M}_0,\mathcal{H}_0):=\limsup_{r\rightarrow 0}\frac{\ln \mu _{\mathcal{H}_0}(\mathcal{M}_0,r)}{-\ln r}<\infty ,
\end{equation*}
where, $\mu _{\mathcal{H}_0}(\mathcal{X},r)$ denotes the minimum number of $r$-balls from $\mathcal{H}_0$ required to cover $\mathcal{X}$ (the so-called Kolmogorov entropy of $X$).
\end{remark}

\begin{remark}  \label{rem_att}
According to the above sources, $\mathcal{M}_0\subset\mathcal{B}^1_0$ is only guaranteed to attract bounded subsets of $\mathcal{B}^1_0$ exponentially fast (in the topology of $\mathcal{H}_0$); i.e.,
\[
\dist_{\mathcal{H}_0}(S_0(t)\mathcal{B}^1_0,\mathcal{M}_0) \leq Q(\|\mathcal{B}^1_0\|_{\mathcal{H}_0})e^{-\omega_2 t},
\]
for some constant $\omega_2>0$.
However, from \cite[Lemma 4.6]{Gal&Shomberg15}, we have that for any bounded subset $B$ of $\mathcal{H}_0$,
\[
\dist_{\mathcal{H}_0}(S_0(t)B,\mathcal{B}^1_0) \leq Q(\|B\|_{\mathcal{H}_0})e^{-\omega_3t},
\]
for some constant $\omega_3>0$ (moreover, this can be see from (\ref{tran-1}) after possibly increasing the radius of $\mathcal{B}^1_0$ in order to contain $\mathcal{U}_0$).
Thus, by the so-called ``transitivity of exponential attraction'' (see Lemma \ref{t:exp-attr}) it follows that (\ref{exp-attn-9}) holds for all non-empty bounded subsets $B$ in $\mathcal{H}_0$.
\end{remark}

Finally, recall

\begin{corollary}
There holds the bound
\begin{equation*}
\dim_{\mathrm{F}}(\mathcal{A}_0,\mathcal{H}_0)\leq \dim_{\mathrm{F}}(\mathcal{M}_0,\mathcal{H}_0).
\end{equation*}
That is, the global attractor $\mathcal{A}_0$ also possesses finite fractal dimension.
\end{corollary}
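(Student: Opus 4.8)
The plan is to exploit the elementary but fundamental inclusion $\mathcal{A}_0\subseteq\mathcal{M}_0$, already flagged in the introduction, and then to invoke monotonicity of the fractal dimension under set inclusion. The entire assertion reduces to these two observations, so no new estimates are required.

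First I would establish that $\mathcal{A}_0\subseteq\mathcal{M}_0$. The key ingredients are all available: the global attractor is invariant, so $S_0(t)\mathcal{A}_0=\mathcal{A}_0$ for every $t\geq 0$ by Theorem \ref{t:robin-global}(1); the exponential attractor $\mathcal{M}_0$ is compact (hence closed) in $\mathcal{H}_0$ and attracts every bounded subset exponentially fast by Theorem \ref{ear}(iii). Since $\mathcal{A}_0$ is compact, hence bounded, applying the attraction estimate \eqref{exp-attn-9} to the bounded set $B=\mathcal{A}_0$ and using invariance gives, for every $t\geq 0$,
\[
\dist_{\mathcal{H}_0}(\mathcal{A}_0,\mathcal{M}_0)=\dist_{\mathcal{H}_0}(S_0(t)\mathcal{A}_0,\mathcal{M}_0)\leq Q(\|\mathcal{A}_0\|_{\mathcal{H}_0})e^{-\omega_1 t}.
\]
Letting $t\to\infty$ forces $\dist_{\mathcal{H}_0}(\mathcal{A}_0,\mathcal{M}_0)=0$, and since $\mathcal{M}_0$ is closed in $\mathcal{H}_0$, this yields $\mathcal{A}_0\subseteq\mathcal{M}_0$.

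Next I would deduce the dimension bound directly from the definition via the Kolmogorov entropy. If $\mathcal{A}_0\subseteq\mathcal{M}_0$, then any finite cover of $\mathcal{M}_0$ by $r$-balls of $\mathcal{H}_0$ is also a cover of $\mathcal{A}_0$, so $\mu_{\mathcal{H}_0}(\mathcal{A}_0,r)\leq\mu_{\mathcal{H}_0}(\mathcal{M}_0,r)$ for every $r>0$. Taking $\ln(\cdot)$, dividing by $-\ln r>0$ for small $r$, and passing to the $\limsup$ as $r\to 0$ then gives
\[
\dim_{\mathrm{F}}(\mathcal{A}_0,\mathcal{H}_0)\leq\dim_{\mathrm{F}}(\mathcal{M}_0,\mathcal{H}_0).
\]
Combined with the finiteness bound in Theorem \ref{ear}(ii), this shows $\mathcal{A}_0$ has finite fractal dimension.

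There is no genuine obstacle here; the statement is a routine corollary once Theorem \ref{ear} is in hand. The only point meriting care is the inclusion step: one must use the invariance of $\mathcal{A}_0$ (rather than mere positive invariance) together with the closedness of $\mathcal{M}_0$ to conclude from a vanishing Hausdorff semidistance that the set containment actually holds, rather than just asymptotic proximity.
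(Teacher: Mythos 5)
Your proof is correct and follows exactly the standard argument the paper alludes to (the paper offers no written proof, merely ``recall,'' having noted in the introduction that $\mathcal{A}\subseteq\mathcal{M}$ whenever both attractors exist): invariance of $\mathcal{A}_0$ plus the exponential attraction estimate \eqref{exp-attn-9} and closedness of $\mathcal{M}_0$ give the inclusion, and monotonicity of the Kolmogorov entropy gives the dimension bound. Your attention to using full invariance (not mere positive invariance) together with closedness to upgrade vanishing semidistance to set containment is precisely the right point of care.
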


\section{Attractors for Problem (A), the $\ep>0$ case}  \label{s:acoustic}

In this section Problem (A) is discussed. The $\ep=1$ case was already presented in \cite{Frigeri10}. 
The main results presented here follow directly from \cite{Frigeri10} with suitable modifications to account for the perturbation parameter $\ep$ occurring in the equation governing the acoustic boundary condition. 
Generally, we do not need to present the proofs for the case $\ep\in(0,1)$ since the modified proofs follow directly from Frigeri's work \cite{Frigeri10}.
However, in some instances $\ep$ may appear in a crucial way in some parameters.
Hence, it is important to keep track of the required modifications. 
These observations will be explained, where needed, by a remark following the statement of the claim.
Indeed, the radius of the absorbing set $\mathcal{B}_\ep^1$ associated with the semiflow $S_\ep$ here depends on $\ep$ in a crucial way (see Remark \ref{r:time-1}).
Despite this fact, we are still able to show that a family of global attractors exists for each $\ep\in(0,1]$.
Furthermore, under assumptions (\ref{f-assumption-2}), (\ref{g-reg-ass-1})-(\ref{f-reg-ass-3}), we obtain the optimal regularity of the global attractors by showing that they are bounded (though, not uniformly with respect to $\ep$) in a compact subset of the phase space.
Additionally, under assumptions (\ref{f-assumption-2}), (\ref{g-reg-ass-1})-(\ref{f-reg-ass-3}), we also show the existence of a family of exponential attractors.
Consequently, the corresponding global attractor possesses finite fractal dimension.
However, due to a lack of appropriate estimates uniform in the perturbation parameter $\ep$, any robustness/H\"older continuity result for the family of exponential attractors is still out of reach.

The section begins with the functional setting for the problem. 
By using the same arguments in \cite{Frigeri10}, it can easily be shown that, for each $\ep\in(0,1]$, Problem (A) possesses unique global weak solutions in a suitable phase space, and the solutions depend continuously on the initial data. 
For the reader's convenience, we sketch the main arguments involved in the proofs.   
As with Problem (R), the solutions generate a family of Lipschitz continuous semiflows, now depending on $\ep$, each of which admits a bounded, absorbing, positively invariant set. As in \cite{Frigeri10}, the existence of global attractors follows using Ball's asymptotic compactness methods. 
As mentioned, under suitable assumptions on $f$ and $g$, we will also establish the existence of a family of exponential attractors.

\subsection{The functional framework for Problem (A)}

The phase space and abstract formulation for the perturbation problem are given in this section. The formulation depends on the parameter $\ep$.

Let 
\[
\mathcal{H}:= H^1(\Omega) \times L^2(\Omega) \times L^2(\Gamma) \times L^2(\Gamma).
\]
The space $\mathcal{H}$ is Hilbert with the norm whose square is given by, for $\zeta=(u,v,\delta,\gamma)\in\mathcal{H}$,
\begin{align}
\|\zeta\|^2_{\mathcal{H}} & := \|u\|^2_1 + \|v\|^2 + \|\delta\|^2_{L^2(\Gamma)} + \|\gamma\|^2_{L^2(\Gamma)}   \notag \\ 
& = \left( \|\nabla u\|^2 + \|u\|^2 \right) + \|v\|^2 + \|\delta\|^2_{L^2(\Gamma)} + \|\gamma\|^2_{L^2(\Gamma)}.   \notag
\end{align}
Let $\ep>0$ and denote by $\mathcal{H}_\ep$ the space $\mathcal{H}$ when endowed with the $\ep$-weighted norm whose square is given by
\begin{align}
\|\zeta\|^2_{\mathcal{H}_\ep} := \|u\|^2_1 + \|v\|^2 + \ep\|\delta\|^2_{L^2(\Gamma)} + \|\gamma\|^2_{L^2(\Gamma)}.   \notag
\end{align}
Let
\[
D(\Delta):= \{ u\in L^2(\Omega) : \Delta u\in L^2(\Omega) \},
\]
and define the set
\[
D(\mathrm{A}_\ep) := \left\{ (u,v,\delta,\gamma)\in D(\Delta) \times H^1(\Omega) \times L^2(\Gamma) \times L^2(\Gamma) : \partial_{\bf{n}}u = \gamma \ \text{on} \ \Gamma \right\}.
\]
Define the linear unbounded operator $\mathrm{A}_\ep:D(\mathrm{A}_\ep)\subset\mathcal{H}_\ep\rightarrow\mathcal{H}_\ep$ by
\[
\mathrm{A}_\ep:=\begin{pmatrix} 0 & 1 & 0 & 0 \\ \Delta-1 & -1 & 0 & 0 \\ 0 & 0 & 0 & 1 \\ 0 & -1 & -\ep & -\ep \end{pmatrix}.
\]
For each $\ep\in(0,1]$, the operator $\mathrm{A}_\ep$ with domain $D(\mathrm{A}_\ep)$ is an infinitesimal generator of a strongly continuous semigroup of contractions on $\mathcal{H}_\ep$, denoted $e^{\mathrm{A}_\ep t}$. According to \cite{Frigeri10}, the $\ep=1$ case follows from \cite[Theorem 2.1]{Beale76}. For each $\ep\in(0,1]$, $\mathrm{A}_\ep$ is dissipative because, for all $\zeta=(u,v,\delta,\gamma)\in D(\mathrm{A}_\ep)$,
\[
\langle \mathrm{A}_\ep\zeta,\zeta \rangle_{\mathcal{H}_\ep} = -\|v\|^2 -\ep\|\gamma\|^2_{L^2(\Gamma)} \leq 0.
\]
Also, the Lax-Milgram theorem can be applied to show that the elliptic system, $(I+\mathrm{A}_\ep)\zeta=\xi$, admits a unique weak solution $\zeta\in D(\mathrm{A}_\ep)$ for any $\xi\in\mathcal{H}_\ep$. Thus, $\mathrm{R}(I+\mathrm{A}_\ep)=\mathcal{H}_\ep$. 

\begin{remark}
Notice that ${\rm{rank}}(\mathrm{R})=3$ while ${\rm{rank}}(\mathrm{A}_\ep)=4$; that is, when $\ep=0$, the operator $\mathrm{R}$ exhibits a ``drop in rank.'' This feature corresponds to the Robin boundary condition given in terms of $u_t$ that results when $\ep=0$ in the boundary condition equation (\ref{acoustic-boundary}).
\end{remark}

For each $\ep\in(0,1]$, the map $\mathcal{G}_\ep:\mathcal{H}_\ep\rightarrow\mathcal{H}_\ep$ given by 
\[
\mathcal{G}_\ep(\zeta):=\begin{pmatrix} 0 \\ -f(u) \\ 0 \\ -\ep g(\delta) \end{pmatrix}
\]
for all $\zeta=(u,v,\delta,\gamma)\in\mathcal{H}_\ep$ is locally Lipschitz continuous because the map $f:H^1(\Omega)\rightarrow L^2(\Omega)$ is locally Lipschitz continuous, and, because of the growth assumption on $g$ given in (\ref{g-assumption-1}), it is easy to see that $g:L^2(\Gamma)\rightarrow L^2(\Gamma)$ is globally Lipschitz continuous. Then Problem (A) may be put into the abstract form in $\mathcal{H}_\ep$
\begin{equation}\label{abstract-acoustic-problem}
\left\{
\begin{array}{l} \displaystyle\frac{\diff\zeta}{\diff t} = \mathrm{A}_\ep\zeta + \mathcal{G}_\ep(\zeta) \\ 
\zeta(0)=\zeta_0 \end{array}
\right.
\end{equation}
where $\zeta=\zeta(t)=(u(t),u_t(t),\delta(t),\delta_t(t))$ and $\zeta_0=(u_0,u_1,\delta_0,\delta_1)\in\mathcal{H}_\ep$, now where $v=u_t$ and $\gamma=\delta_t$ in the sense of distributions.

To obtain the {\em{energy equation}} for Problem (A), multiply (\ref{damped-wave-equation}) by $2u_t$ in $L^2(\Omega)$ and multiply (\ref{acoustic-boundary}) by $2\delta_t$ in $L^2(\Gamma)$, then sum the resulting identities to obtain
\begin{equation}\label{acoustic-energy-3}
\frac{\diff}{\diff t} \left\{ \|\zeta\|^2_{\mathcal{H}_\ep} + 2\int_\Omega F(u) \diff x + 2\ep\int_\Gamma G(\delta) \diff \sigma \right\} + 2\|u_t\|^2 + 2\ep\|\delta_t\|^2_{L^2(\Gamma)} = 0,
\end{equation}
where $F(s)=\int_0^s f(\xi) \diff \xi$ and now $G(s)=\int_0^s g(\xi) \diff \xi$, and $\diff \sigma$ represents the natural surface measure on $\Gamma$. 
Reflecting (\ref{from-f-assumption-2}), there is a constant $\mu_1\in(0,1]$ such that 
\begin{equation}\label{from-g-assumption-2}
2\ep\int_\Gamma G(\delta) \diff \sigma \geq -(1-\mu_1)\ep\|\delta\|^2_{L^2(\Gamma)} - \ep\kappa_g
\end{equation}
for some constant $\kappa_g\geq 0$.
Additionally, from the sign condition (\ref{g-assumption-2}), there holds
\begin{equation}\label{from-g-assumption-1}
\ep\langle g(\delta),\delta \rangle_{L^2(\Gamma)} \ge -(1-\mu_1)\ep\|\delta\|^2_{L^2(\Gamma)} - \ep\kappa_g.
\end{equation}

\begin{lemma}  \label{adjoint-a}
For each $\varepsilon \in (0,1]$, the adjoint of $\mathrm{A}_\ep$, denoted 
$\mathrm{A}_\ep^{\ast }$, is given by 
\begin{equation*}
\mathrm{A}_\ep^{\ast }:= -\begin{pmatrix} 0 & 1 & 0 & 0 \\ \Delta-1 & 1 & 0 & 0 \\ 0 & 0 & 0 & 1 \\ 0 & -1 & -\ep & \ep \end{pmatrix},
\end{equation*}
with domain 
\begin{equation*}
D(\mathrm{A}_\ep^{\ast }):=\{(\chi ,\psi, \phi,\xi )\in D(\Delta) \times H^1(\Omega) \times L^2(\Gamma) \times L^2(\Gamma) : \partial _{\mathbf{n}}\chi = - \xi \ \text{on} \ \Gamma \}.
\end{equation*}
\end{lemma}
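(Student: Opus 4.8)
The plan is to run the standard unbounded-operator adjoint calculation, in the same spirit as Lemma~\ref{adjoint-r} (compare \cite[Lemma 3.1]{Ball04}): one reads off the action of $\mathrm{A}_\ep^{\ast}$ from the interior (formal) part of the pairing and pins down $D(\mathrm{A}_\ep^{\ast})$ from the surviving boundary terms. Concretely, I would fix $\zeta=(u,v,\delta,\gamma)\in D(\mathrm{A}_\ep)$ and a trial element $\eta=(\chi,\psi,\phi,\xi)\in D(\Delta)\times H^1(\Omega)\times L^2(\Gamma)\times L^2(\Gamma)$, recall that
\[
\mathrm{A}_\ep\zeta=\left(v,\ \Delta u-u-v,\ \gamma,\ -v-\ep\delta-\ep\gamma\right),
\]
and expand $\langle \mathrm{A}_\ep\zeta,\eta\rangle_{\mathcal{H}_\ep}$ slotwise against the $\ep$-weighted inner product, i.e. with weight $1$ on the $H^1(\Omega)$, $L^2(\Omega)$ and final $L^2(\Gamma)$ factors and weight $\ep$ on the third factor.

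The analytic heart of the computation is two applications of Green's identity. In the $L^2(\Omega)$ slot I would transfer the Laplacian off $u$, writing $\langle\Delta u,\psi\rangle=-\langle\nabla u,\nabla\psi\rangle+\langle\partial_{\mathbf{n}}u,\psi\rangle_{L^2(\Gamma)}$ and then invoking the domain constraint $\partial_{\mathbf{n}}u=\gamma$ to replace the boundary integral by $\langle\gamma,\psi\rangle_{L^2(\Gamma)}$; this boundary term subsequently pairs with $\gamma$ and supplies the $\psi$-entry of the fourth row of $\mathrm{A}_\ep^{\ast}$. In the $H^1(\Omega)$ slot I would transfer the gradient off $v$ through $\langle\nabla v,\nabla\chi\rangle=-\langle v,\Delta\chi\rangle+\langle v,\partial_{\mathbf{n}}\chi\rangle_{L^2(\Gamma)}$, which is where the requirement $\chi\in D(\Delta)$ enters and which produces the interior expression $-\Delta\chi+\chi-\psi$ for the second component of $\mathrm{A}_\ep^{\ast}\eta$. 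Collecting the interior contributions and matching them against $\langle\zeta,\mathrm{A}_\ep^{\ast}\eta\rangle_{\mathcal{H}_\ep}$ then reads off the full matrix form of $\mathrm{A}_\ep^{\ast}$, while all boundary contributions except one cancel against the $L^2(\Gamma)$ slots of $\langle\zeta,\mathrm{A}_\ep^{\ast}\eta\rangle_{\mathcal{H}_\ep}$.

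The domain is determined by the single residual boundary pairing, which is a $\Gamma$-integral of the trace $v_{\mid\Gamma}$ against the boundary data of $\eta$ built from $\partial_{\mathbf{n}}\chi$ and $\xi$. Since $\zeta$ ranges over all of $D(\mathrm{A}_\ep)$, the trace $v_{\mid\Gamma}$ ranges over the image of $H^1(\Omega)$ in $L^2(\Gamma)$, which is dense because $H^{1/2}(\Gamma)\hookrightarrow L^2(\Gamma)$ densely; hence the residual pairing vanishes for every admissible $\zeta$ exactly when $\chi$ and $\xi$ satisfy the boundary relation recorded in $D(\mathrm{A}_\ep^{\ast})$. This yields the inclusion into the adjoint domain together with the adjoint identity for every such $\eta$; conversely, one argues that continuity of $\zeta\mapsto\langle\mathrm{A}_\ep\zeta,\eta\rangle_{\mathcal{H}_\ep}$ on $\mathcal{H}_\ep$ forces $\chi$ to possess a weak Laplacian in $L^2(\Omega)$ and a normal trace tied to $\xi$ by the same relation, so that no element outside the stated set can belong to $D(\mathrm{A}_\ep^{\ast})$.

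I expect the principal obstacle to be the rigorous justification of the two Green identities on $D(\Delta)=\{u\in L^2(\Omega):\Delta u\in L^2(\Omega)\}$ rather than on $H^2(\Omega)$. For a function that is merely in $D(\Delta)$ the normal trace $\partial_{\mathbf{n}}u$ a priori lives only in $H^{-1/2}(\Gamma)$, so each boundary integral above must first be interpreted as an $H^{-1/2}(\Gamma)$--$H^{1/2}(\Gamma)$ duality and the generalized Green formula invoked; it is precisely the membership constraints $\partial_{\mathbf{n}}u=\gamma\in L^2(\Gamma)$ in $D(\mathrm{A}_\ep)$ and the analogous constraint on $\chi$ in $D(\mathrm{A}_\ep^{\ast})$ that upgrade these dualities to genuine $L^2(\Gamma)$ inner products and render the cancellations meaningful. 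The same trace-density observation then settles the maximality half of the domain characterization.
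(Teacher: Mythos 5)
Your proposal is correct and follows essentially the same route as the paper, which simply cites the analogous calculation in \cite[Lemma 3.1]{Ball04}: expand $\langle \mathrm{A}_\ep\zeta,\eta\rangle_{\mathcal{H}_\ep}$, apply Green's identity in the $L^2(\Omega)$ and $H^1(\Omega)$ slots, match interior terms to read off the matrix, and use density of $H^1(\Omega)$-traces in $L^2(\Gamma)$ to extract the boundary condition $\partial_{\mathbf{n}}\chi=-\xi$ from the lone surviving boundary pairing. Your added care in interpreting $\partial_{\mathbf{n}}u$ and $\partial_{\mathbf{n}}\chi$ first as $H^{-1/2}(\Gamma)$--$H^{1/2}(\Gamma)$ dualities on $D(\Delta)$, upgraded to $L^2(\Gamma)$ pairings by the domain constraints, is a legitimate refinement of the same argument rather than a departure from it.
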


\begin{proof}
The proof is a calculation similar to, e.g., \cite[Lemma 3.1]{Ball04}.
\end{proof}

\subsection{Well-posedness of Problem (A)}

Again, the definition of weak solution is from \cite{Ball77}.

\begin{definition} 
Let $T>0$. A map $\zeta\in C([0,T];\mathcal{H}_\ep)$ is a {\em{weak solution}} of (\ref{abstract-acoustic-problem}) on $[0,T]$ if for each $\xi\in D(A^*_\ep)$ the map $t \mapsto \langle \zeta(t),\xi \rangle_{\mathcal{H}_\ep}$ is absolutely continuous on $[0,T]$ and satisfies, for almost all $t\in[0,T]$,
\begin{equation}  \label{abs-2}
\frac{\diff}{\diff t}\langle \zeta(t),\xi \rangle_{\mathcal{H}_\ep} = \langle \zeta(t),A^*_\ep\xi \rangle_{\mathcal{H}_\ep} + \langle \mathcal{G}_\ep(\zeta(t)),\xi \rangle_{\mathcal{H}_\ep}.
\end{equation}
The map $\zeta$ is a weak solution on $[0,\infty)$ (i.e. a {\em{global weak solution}}) if it is a weak solution on $[0,T]$ for all $T>0$. 
\end{definition}

Following \cite{Ball04}, we provide the equivalent notion of a mild solution.

\begin{definition}
Let $T>0$. A function $\zeta:[0,T]\rightarrow\mathcal{H}_\ep$ is a weak/mild solution of (\ref{abstract-acoustic-problem}) on $[0,T]$ if and only if $\mathcal{G}_\ep(\zeta(\cdot))\in L^1(0,T;\mathcal{H}_\ep)$ and $\zeta$ satisfies the variation of constants formula, for all $t\in[0,T],$
\[
\zeta(t)=e^{\mathrm{A}_\ep t}\zeta_0 + \int_0^t e^{\mathrm{A}_\ep(t-s)}\mathcal{G}_\ep(\zeta(s)) \diff s.
\]
\end{definition}

Again, our notion of weak solution is equivalent to the standard concept of a weak (distributional) solution to Problem (A).
Indeed, since $f:H^{1}\left( \Omega \right) \rightarrow L^{2}\left( \Omega \right) $ is sequentially weakly continuous and continuous and $\left( \zeta_{t},\theta \right) \in C^{1}\left( \left[ 0,T\right] \right) $ for all $\theta \in D\left( A^{\ast }\right) $, and (\ref{abs-2}) is satisfied. 

\begin{definition}  \label{aweak}
A function $\zeta=(u,u_{t},\delta,\delta_t):[0,T]\rightarrow \mathcal{H}_\ep$ is a weak solution of (\ref{abstract-acoustic-problem}) (and, thus of (\ref{damped-wave-equation}), (\ref{Robin-initial-conditions}), (\ref{acoustic-boundary}) and (\ref{acoustic-initial-conditions})) on $[0,T],$ if, for almost all $t\in \left[ 0,T\right],$
\begin{equation*}
\zeta =(u,u_{t},\delta,\delta_t)\in C(\left[ 0,T\right] ;\mathcal{H}_\ep),
\end{equation*}
and, for each $\psi \in H^{1}\left( \Omega \right),$ $\langle u_{t},\psi
\rangle \in C^{1}\left( \left[ 0,T\right] \right) $ with
\begin{equation*}
\frac{\diff}{\diff t}\left\langle u_{t}\left( t\right) ,\psi \right\rangle + \left\langle u_{t}\left( t\right) ,\psi \right\rangle + \left\langle u\left( t\right) ,\psi \right\rangle_1 = -\left\langle f\left( u\left( t\right) \right) ,\psi \right\rangle - \left\langle \delta_t\left( t\right) ,\psi \right\rangle _{L^{2}\left( \Gamma \right) },
\end{equation*}
and, for each $\phi \in L^{2}\left(\Gamma\right),$ $\left( \delta_{t},\phi
\right) \in C^{1}\left( \left[ 0,T\right] \right) $ with
\begin{equation*}
\frac{\diff}{\diff t}\left\langle \delta_{t}\left( t\right) ,\phi \right\rangle_{L^{2}\left( \Gamma \right) } + \left\langle \delta_{t}\left( t\right) ,\phi \right\rangle_{L^{2}\left( \Gamma \right) } + \left\langle \delta\left( t\right) ,\phi \right\rangle _{L^{2}\left( \Gamma \right) } = - \left\langle g\left( \delta\left( t\right) \right) ,\phi\right\rangle_{L^2(\Gamma)}.
\end{equation*}
\end{definition}

Recall from the previous section that $f:H^1(\Omega)\rightarrow L^2(\Omega)$ is sequentially weakly continuous and continuous.
Recall that, by \cite[Proposition 3.4]{Ball04} and Lemma \ref{adjoint-a}, $\langle \zeta,\xi \rangle_{\mathcal{H}_\varepsilon}\in C([0,T])$ for all $\xi\in D(A^*)$.

The definition of strong solution follows.
First, for each $\ep\in(0,1]$, define the space, 
\[
\mathcal{D}_\ep:=\left\{ (u,v,\delta,\gamma)\in H^2(\Omega)\times H^1(\Omega)\times H^{1/2}(\Gamma)\times H^{1/2}(\Gamma):\partial_{\bf{n}}u = \gamma ~~\text{on}~~ \Gamma \right\},
\]
and let $\mathcal{D}_\ep$ be equipped with the $\ep$-weighted norm whose square is given by, for all $\zeta=(u,v,\delta,\gamma)\in\mathcal{D}_\ep$, 
\[
\|\zeta\|^2_{\mathcal{D}_\ep}:=\|u\|^2_2+\|v\|^2_1+\ep\|\delta\|^2_{H^{1/2}(\Gamma)}+\|\gamma\|^2_{H^{1/2}(\Gamma)}.
\]

\begin{definition}  \label{d:acoustic-strong}
Let $\zeta_0 = \left(u_0,u_1,\delta_0,\delta_1\right) \in \mathcal{D}_\ep$: that is, let $\zeta_0\in H^{2}(\Omega )\times H^{1}(\Omega )\times H^{1/2}(\Gamma)\times H^{1/2}(\Gamma)$ be such that the compatibility condition 
\begin{equation*}
\partial _{\bf{n}}u_{0} = \delta_1 \ \text{on} \ \Gamma
\end{equation*}
is satisfied. 
A function $\zeta (t) =(u(t),u_t(t),\delta(t),\delta_t(t))$ is called a (global) strong solution if it is a (global) weak solution in the sense of Definition \ref{aweak} and if it satisfies the following regularity properties:
\begin{equation}  \label{acoustic-regularity-property}
\begin{array}{l}
\zeta \in L^{\infty }([0,\infty);\mathcal{D}_\varepsilon)\quad\text{and}\quad\partial_t\zeta\in L^{\infty }([0,\infty);\mathcal{H}_\varepsilon).
\end{array}
\end{equation}
Therefore, $\zeta(t) = (u(t), u_t(t),\delta(t),\delta_t(t)) $ satisfies the equations (\ref{damped-wave-equation}), (\ref{acoustic-boundary}) and (\ref{acoustic-initial-conditions}) almost everywhere; i.e., is a strong solution.
\end{definition}

Now we give the first main result in this section. 

\begin{theorem}  \label{t:ws-a}
Assume (\ref{f-assumption-1}), (\ref{f-assumption-2}), (\ref{g-assumption-1}),(\ref{g-assumption-2}).
Let $\zeta_0\in\mathcal{H}_\ep$. 
For each $\ep\in(0,1]$, there exists a unique global weak solution $\zeta\in C([0,\infty);\mathcal{H}_\ep)$ to (\ref{abstract-acoustic-problem}). 
For each weak solution, the map 
\begin{equation}\label{acoustic-C1-map}
t \mapsto \|\zeta(t)\|^2_{\mathcal{H}_\ep} + 2\int_\Omega F(u(t)) \diff x + 2\ep\int_\Gamma G(\delta(t)) \diff x 
\end{equation}
is $C^1([0,\infty))$ and the energy equation (\ref{acoustic-energy-3}) holds (in the sense of distributions). 
Moreover, for all $\zeta_0, \xi_0\in\mathcal{H}_\ep$, there exists a positive constant $\nu_1>0$, depending on $\|\zeta_0\|_{\mathcal{H}_\ep}$ and $\|\xi_0\|_{\mathcal{H}_\ep}$, such that for all $t\geq 0$,
\begin{equation}\label{acoustic-continuous-dependence}
\|\zeta(t)-\xi(t)\|_{\mathcal{H}_\ep} \leq e^{\nu_1 t} \|\zeta_0 - \xi_0\|_{\mathcal{H}_\ep}.
\end{equation}
Furthermore, when (\ref{f-assumption-2}) and (\ref{g-reg-ass-1})-(\ref{f-reg-ass-3}) hold, and $\zeta_0\in\mathcal{D}_\ep$, then there exists a unique global strong solution $\zeta\in C([0,\infty);\mathcal{D}_\ep)$ to (\ref{abstract-acoustic-problem}). 
\end{theorem}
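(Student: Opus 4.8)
The plan is to follow, \emph{mutatis mutandis}, the argument used for Problem (R) in Theorem \ref{t:Robin-weak-solutions}, now exploiting the two structural facts established above: for each $\ep\in(0,1]$ the operator $(\mathrm{A}_\ep,D(\mathrm{A}_\ep))$ generates a strongly continuous semigroup of contractions $e^{\mathrm{A}_\ep t}$ on $\mathcal{H}_\ep$, and $\mathcal{G}_\ep:\mathcal{H}_\ep\rightarrow\mathcal{H}_\ep$ is locally Lipschitz continuous. First I would invoke the standard local theory for semilinear Cauchy problems (e.g. \cite[Theorem 2.5.4]{Zheng04}) to produce, for any $\zeta_0\in\mathcal{H}_\ep$, a maximal existence time $T^*=T^*(\|\zeta_0\|_{\mathcal{H}_\ep})>0$ and a unique local weak/mild solution $\zeta\in C([0,T^*);\mathcal{H}_\ep)$ satisfying the variation of constants formula.

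The second step is the global \emph{a priori} bound that forces $T^*=+\infty$. Integrating the energy equation (\ref{acoustic-energy-3}) over $(0,t)$, discarding the nonnegative dissipation integrals $2\int_0^t\|u_t\|^2\,\diff\tau$ and $2\ep\int_0^t\|\delta_t\|^2_{L^2(\Gamma)}\,\diff\tau$, and applying the coercivity lower bounds (\ref{from-f-assumption-2}) and (\ref{from-g-assumption-2}) together with the polynomial control $|F(s)|\le C|s|(1+|s|^3)$ coming from (\ref{f-assumption-1}) (and the control of $G$ afforded by (\ref{g-assumption-1})), yields $\|\zeta(t)\|_{\mathcal{H}_\ep}\le C$ uniformly for $t\in[0,T^*)$. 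As this bound is independent of $T^*$, the solution is global; and, exactly as in the Robin case, the map (\ref{acoustic-C1-map}) is then $C^1$ and the energy identity holds in the sense of distributions.

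For continuous dependence I would take two global solutions $\zeta=(u,u_t,\delta,\delta_t)$ and $\xi=(\hat u,\hat u_t,\hat\delta,\hat\delta_t)$ with data $\zeta_0,\xi_0$ and test the system satisfied by the difference $\bar\zeta=\zeta-\xi$: multiply the interior equation by twice the time derivative of the first component in $L^2(\Omega)$ and the boundary equation by the corresponding boundary time derivative in $L^2(\Gamma)$, then sum. This produces a differential inequality for $\|\bar\zeta(t)\|^2_{\mathcal{H}_\ep}$ whose right-hand side is absorbed using the local Lipschitz continuity of $f:H^1(\Omega)\rightarrow L^2(\Omega)$ (whose constant is fixed by the uniform bound from the previous step) and the global Lipschitz continuity of $g:L^2(\Gamma)\rightarrow L^2(\Gamma)$. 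A Gr\"onwall argument then delivers (\ref{acoustic-continuous-dependence}), and uniqueness follows immediately.

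For strong solutions one sets $g\equiv0$ through (\ref{g-reg-ass-1}) and assumes (\ref{f-reg-ass-2})--(\ref{f-reg-ass-3}), so that $\mathcal{G}_\ep$ collapses to the single slot $-f(u)$; the growth condition (\ref{f-reg-ass-2}) upgrades $\mathcal{G}_\ep$ to a locally Lipschitz map of $D(\mathrm{A}_\ep)$ into itself, and the corresponding regularity result (e.g. \cite[Theorem 2.5.6]{Zheng04}) furnishes, for $\zeta_0\in\mathcal{D}_\ep$, a unique solution with $\zeta\in C^1([0,\infty);\mathcal{H}_\ep)\cap C([0,\infty);D(\mathrm{A}_\ep))$. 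The main obstacle I expect here is reconciling the \emph{abstract} domain $D(\mathrm{A}_\ep)$, prescribed only through $u\in D(\Delta)$ with $\partial_{\bf{n}}u=\gamma$, with the \emph{concrete} strong-solution space $\mathcal{D}_\ep$, which demands $u\in H^2(\Omega)$ and $\delta,\gamma\in H^{1/2}(\Gamma)$. This upgrade rests on Neumann-type elliptic regularity for $-\Delta u$ with flux datum $\partial_{\bf{n}}u=\gamma\in H^{1/2}(\Gamma)$ on the two-dimensional surface $\Gamma$; care is warranted because the identification $D(\mathrm{A}_\ep)\hookrightarrow\mathcal{D}_\ep$ and the constants in the attendant estimate may depend on $\ep$, consistently with the $\ep$-sensitivity flagged in the surrounding discussion. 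Once this embedding is in place, the regularity (\ref{acoustic-regularity-property}) is immediate.
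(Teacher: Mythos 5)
Your treatment of the weak-solution part coincides with the paper's: generation of the contraction semigroup $e^{\mathrm{A}_\ep t}$ via \cite{Beale76} and the Lumer--Phillips theorem, local Lipschitz continuity of $\mathcal{G}_\ep$, the local theory of \cite[Theorem 2.5.4]{Zheng04}, and globality obtained by integrating (\ref{acoustic-energy-3}) and applying the coercivity bounds (\ref{from-f-assumption-2}) and (\ref{from-g-assumption-2}); your Gr\"onwall argument for (\ref{acoustic-continuous-dependence}) is exactly the computation the paper delegates to \cite[Theorem 1]{Frigeri10}, and it is sound.

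The genuine gap is in the strong-solution step. You propose to conclude via an ``identification $D(\mathrm{A}_\ep)\hookrightarrow\mathcal{D}_\ep$'' justified by Neumann-type elliptic regularity with flux datum $\partial_{\bf{n}}u=\gamma\in H^{1/2}(\Gamma)$. No such embedding exists, even with $\ep$-dependent constants: an element of $D(\mathrm{A}_\ep)$ carries only $\delta,\gamma\in L^2(\Gamma)$, and elliptic regularity for $u\in D(\Delta)$ with $\partial_{\bf{n}}u\in L^2(\Gamma)$ yields merely $u\in H^{3/2}(\Omega)$, not $H^{2}(\Omega)$; the hypothesis $\gamma\in H^{1/2}(\Gamma)$ that your elliptic step requires is precisely what membership in the abstract domain fails to provide, and $\delta\in H^{1/2}(\Gamma)$ would not follow from it in any case. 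The correct mechanism is to \emph{propagate} the boundary regularity from the initial data along the solution rather than read it off from $D(\mathrm{A}_\ep)$: the abstract solution satisfies $u_t\in C([0,\infty);H^1(\Omega))$, so its trace lies in $C([0,\infty);H^{1/2}(\Gamma))$; the boundary equation $\delta_{tt}+\ep(\delta_t+\delta)=-u_t$ is then a second-order ODE in the fixed Banach space $H^{1/2}(\Gamma)$ (it has no smoothing in the spatial variable), and since $\zeta_0\in\mathcal{D}_\ep$ gives $(\delta_0,\delta_1)\in H^{1/2}(\Gamma)\times H^{1/2}(\Gamma)$, variation of constants keeps $\delta(t),\delta_t(t)\in H^{1/2}(\Gamma)$ for all $t\ge0$. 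Only at this point does elliptic regularity, applied with $\Delta u(t)\in L^2(\Omega)$ and $\partial_{\bf{n}}u(t)=\delta_t(t)\in H^{1/2}(\Gamma)$, give $u(t)\in H^2(\Omega)$ and hence $\zeta(t)\in\mathcal{D}_\ep$; this bootstrap is how the strong-solution claim in \cite[Theorem 1]{Frigeri10}, to which the paper defers, is actually closed. (The inclusion in the harmless direction, $\mathcal{D}_\ep\subset D(\mathrm{A}_\ep)$, is what licenses applying \cite[Theorem 2.5.6]{Zheng04} to $\zeta_0\in\mathcal{D}_\ep$ in the first place, and your verification that $\mathcal{G}_\ep$ maps $D(\mathrm{A}_\ep)$ into itself under (\ref{f-reg-ass-2}) with $g\equiv0$ is fine.)
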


\begin{proof}
We only mention the first part of the proof. 
Following \cite[Proof of Theorem 1]{Frigeri10}: The operator $\mathrm{A}_\ep$ is the generator of a $C^0$-semigroup of contractions in $\mathcal{H}_\ep$. 
This follows from \cite{Beale76} and the Lumer-Phillips Theorem. 
Also, by (\ref{f-assumption-1}) and (\ref{g-assumption-1}), the functional $\mathcal{G}_\ep:\mathcal{H}_\ep\rightarrow\mathcal{H}_\ep$ is locally Lipschitz continuous. 
So there is $T^*>0$ and a maximal weak solution $\zeta\in C([0,T^*),\mathcal{H}_\ep)$ (cf. e.g. \cite{Zheng04}).
To show $T^*=+\infty$, observe that integrating the energy identity (\ref{acoustic-energy-3}) over $(0,t)$ yields, for all $t\in[0,T^*),$
\begin{align}
& \|\zeta(t)\|^2_{\mathcal{H}_\ep} + 2\int_\Omega F(u(t)) \diff x + 2\ep\int_\Gamma G(\delta(t)) \diff x + \int_0^t \left( 2\|u_\tau(\tau)\|^2d\tau + 2\ep\|\delta_\tau(\tau)\|^2_{L^2(\Gamma)} \right) d\tau   \notag \\
&= \|\zeta_0\|^2_{\mathcal{H}_\ep} + 2\int_\Omega F(u_0) \diff x + 2\ep\int_\Gamma G(\delta_0) \diff x.    \label{fit-1} 
\end{align}
Applying (\ref{from-f-assumption-2}) and (\ref{from-g-assumption-2}) to (\ref{fit-1}), we find that, for all $t\in[0,T^*)$,
\begin{equation*}
\|\zeta(t)\|_{\mathcal{H}_\ep}\le C(\|\zeta_0\|_{\mathcal{H}_\ep}),
\end{equation*}
with some $C>0$ independent of $t$; which of course means $T^*=+\infty.$
Moreover, we know that when $\zeta_0\in\mathcal{H}_\ep$ is such that $\|\zeta_0\|_{\mathcal{H}_\ep}\le R$ for all $\ep\in(0,1],$ then there holds the uniform bound, for all $t\ge0,$
\begin{equation}  \label{acoustic-bound}
\|\zeta(t)\|_{\mathcal{H}_\ep}\le Q(R).
\end{equation}

The remainder of the proof follows directly from \cite[Theorem 1]{Frigeri10}.
\end{proof}

As above, we formalize the dynamical system associated with Problem (A).

\begin{corollary}  \label{sf-a}
Let $\zeta_0=(u_0,u_1,\delta_0,\delta_1)\in\mathcal{H}_\ep$ and let $u$ and $\delta$ be the unique solution of Problem (A). 
For each $\ep\in(0,1],$ the family of maps $S_\ep=(S_\ep(t))_{t\geq 0}$ defined by 
\begin{align}
&S_\ep(t)\zeta_0(x):=  \notag \\
&(u(t,x,u_0,u_1,\delta_0,\delta_1),u_t(t,x,u_0,u_1,\delta_0,\delta_1),\delta(t,x,u_0,u_1,\delta_0,\delta_1),\delta_t(t,x,u_0,u_1,\delta_0,\delta_1))  \notag
\end{align}
is the semiflow generated by Problem (A). 
The operators $S_\ep(t)$ satisfy
\begin{enumerate}
	\item $S_\ep(t+s)=S_\ep(t)S_\ep(s)$ for all $t,s\geq 0$.
	\item $S_\ep(0)=I_{\mathcal{H}_\ep}$ (the identity on $\mathcal{H}_\ep$)
	\item $S_\ep(t)\zeta_0\rightarrow S_\ep(t_0)\zeta_0$ for every $\zeta_0\in\mathcal{H}_\ep$ when $t\rightarrow t_0$.
\end{enumerate}

Additionally, each mapping $S_\ep(t):\mathcal{H}_\ep\rightarrow\mathcal{H}_\ep$ is Lipschitz continuous, uniformly in $t$ on compact intervals; i.e., for all $\zeta_0, \chi_0\in\mathcal{H}_\ep$, and for each $T\geq 0$, and for all $t\in[0,T]$,
\begin{equation}\label{sf-a-lc}
\|S_\ep(t)\zeta_0-S_\ep(t)\chi_0\|_{\mathcal{H}_\ep} \leq e^{\nu_1 T}\|\zeta_0-\chi_0\|_{\mathcal{H}_\ep}.
\end{equation}
\end{corollary}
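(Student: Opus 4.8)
The plan is to follow verbatim the template established for Problem (R) in Corollary~\ref{sf-r}, since all of the genuine analytic content has already been secured by Theorem~\ref{t:ws-a}; what remains is bookkeeping. First I would note that, for each fixed $\ep\in(0,1]$ and each $\zeta_0\in\mathcal{H}_\ep$, Theorem~\ref{t:ws-a} produces a \emph{unique} global weak solution $\zeta\in C([0,\infty);\mathcal{H}_\ep)$. Uniqueness is precisely what guarantees that $S_\ep(t)$ is a well-defined single-valued operator on $\mathcal{H}_\ep$ for every $t\ge 0$, so that the family $(S_\ep(t))_{t\ge 0}$ makes sense.

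Property (2), namely $S_\ep(0)=I_{\mathcal{H}_\ep}$, is immediate from the initial condition $\zeta(0)=\zeta_0$. For the semigroup property (1) I would exploit the autonomy of the abstract problem (\ref{abstract-acoustic-problem}) together with uniqueness: fixing $s\ge 0$, the translated map $t\mapsto\zeta(t+s)$ solves (\ref{abstract-acoustic-problem}) with initial datum $\zeta(s)=S_\ep(s)\zeta_0$; by uniqueness this must coincide with the solution $t\mapsto S_\ep(t)\bigl(S_\ep(s)\zeta_0\bigr)$, and evaluating at time $t$ gives $S_\ep(t+s)\zeta_0=S_\ep(t)S_\ep(s)\zeta_0$. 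Property (3), continuity in $t$, is nothing more than the membership $\zeta\in C([0,\infty);\mathcal{H}_\ep)$ built into the definition of weak solution, which also yields strong continuity at $t_0=0$.

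The Lipschitz estimate (\ref{sf-a-lc}) is then a direct consequence of the continuous-dependence bound (\ref{acoustic-continuous-dependence}) furnished by Theorem~\ref{t:ws-a}. Given $\zeta_0,\chi_0\in\mathcal{H}_\ep$ with corresponding solutions $\zeta,\chi$, inequality (\ref{acoustic-continuous-dependence}) reads $\|\zeta(t)-\chi(t)\|_{\mathcal{H}_\ep}\le e^{\nu_1 t}\|\zeta_0-\chi_0\|_{\mathcal{H}_\ep}$, where $\nu_1>0$ depends on $\|\zeta_0\|_{\mathcal{H}_\ep}$ and $\|\chi_0\|_{\mathcal{H}_\ep}$. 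Since the exponential is increasing in $t$, restricting to $t\in[0,T]$ yields $e^{\nu_1 t}\le e^{\nu_1 T}$, and (\ref{sf-a-lc}) follows at once.

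I do not expect a genuine obstacle here, as the corollary is an immediate consequence of the well-posedness theorem; the proof is essentially two lines, exactly as for $S_0$. The only point requiring a modicum of care is to keep the continuous-dependence constant $\nu_1$ consistent with the $\ep$-weighted norm of $\mathcal{H}_\ep$, so that the Lipschitz property is understood \emph{within} each fixed space $\mathcal{H}_\ep$ (for a fixed $\ep$) rather than uniformly across the family in $\ep$ -- a distinction that matters later when the $\ep$-dependence of constants becomes delicate, but which is harmless at this stage.
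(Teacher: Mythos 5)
Your proposal is correct and takes essentially the same route as the paper: the paper likewise derives properties (1)--(3) from the standard semigroup/uniqueness framework (citing references for the translation argument you spell out explicitly) and obtains (\ref{sf-a-lc}) directly from the continuous-dependence estimate (\ref{acoustic-continuous-dependence}). Your closing remark that $\nu_1$ depends on the data and that the Lipschitz bound holds within each fixed $\mathcal{H}_\ep$ rather than uniformly in $\ep$ is consistent with the paper's statement and needs no further elaboration.
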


\begin{proof}
The proof is not much different from the proof of Corollary \ref{sf-r} above.
The Lipschitz continuity property follows from (\ref{acoustic-continuous-dependence}).
\end{proof}

\subsection{Dissipativity of Problem (A)}

The dynamical system $(S_\ep(t),\mathcal{H}_\ep)$ is shown to admit a positively invariant, bounded absorbing set in $\mathcal{H}_\ep$. 
The argument follows \cite[Theorem 2]{Frigeri10}.

\begin{lemma}  \label{t:a-abs-set}
Assume (\ref{f-assumption-1}), (\ref{f-assumption-2}), (\ref{g-assumption-1}), and (\ref{g-assumption-2}) hold.
For each $\ep\in(0,1]$, there exists $R_{1}>0$, independent of $\ep$, such that the following holds: for every $R>0$, there exists $t_{1\ep}=t_1(\ep,R) \ge 0$, depending on $\ep$ and $R$, so that, for all $\zeta_0\in\mathcal{H}_\ep$ with $\|\zeta_0\|_{\mathcal{H}_\ep} \le R$ for every $\ep\in(0,1]$, and for all $t\geq t_{1\ep}$,
\begin{equation}  \label{acoustic-radius}
\|S_\ep(t)\zeta_0\|_{\mathcal{H}_\ep} \le R_{1}.
\end{equation}
Furthermore, for each $\ep\in(0,1],$ the set 
\begin{equation}  \label{abss-1}
\mathcal{B}_\ep:=\{ \zeta\in \mathcal{H}_\ep : \|\zeta\|_{\mathcal{H}_\ep} \leq R_{1} \}
\end{equation}
is closed, bounded, absorbing, and positively invariant for the dynamical system $(S_\ep,\mathcal{H}_\ep)$.
\end{lemma}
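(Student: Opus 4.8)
The plan is to follow the Lyapunov-functional argument of \cite[Theorem 2]{Frigeri10}, adapted so as to keep explicit track of the powers of $\ep$; the whole point is that the dissipation available for the boundary components carries a factor of $\ep$, which forces the rate of absorption (hence $t_{1\ep}$) to degenerate like $\ep^{-1}$ as $\ep\to 0$, while the \emph{radius} $R_1$ remains uniform. I would first carry out the estimates for strong solutions (Definition \ref{d:acoustic-strong}) and then extend to weak solutions by the energy identity (\ref{acoustic-energy-3}), exactly as in the well-posedness part of Theorem \ref{t:ws-a}.

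First I would multiply (\ref{damped-wave-equation}) by $2(u_t+\eta u)$ in $L^2(\Omega)$ and (\ref{acoustic-boundary})$_1$ by $2(\delta_t+\eta\delta)$ in $L^2(\Gamma)$, and then add the two identities, choosing the weight $\eta=\eta_0\ep$ with $\eta_0>0$ small and independent of $\ep$. The crucial algebraic cancellation is that, after using Green's formula on $-\Delta u$ and the continuity condition $\partial_{\bf{n}}u=\delta_t$ from (\ref{acoustic-boundary})$_2$, the dangerous boundary cross term $\langle u_t,\delta_t\rangle_{L^2(\Gamma)}$ appears with opposite signs from the two equations and cancels identically. The only surviving coupling is the lower-order pair $2\eta(\langle u_t,\delta\rangle_{L^2(\Gamma)}-\langle u,\delta_t\rangle_{L^2(\Gamma)})$; since the trace of $u_t$ is not controlled for (weak) solutions, I would integrate $\langle u_t,\delta\rangle_{L^2(\Gamma)}$ by parts in time, writing it as $\frac{\diff}{\diff t}\langle u,\delta\rangle_{L^2(\Gamma)}-\langle u,\delta_t\rangle_{L^2(\Gamma)}$, so that only the trace of $u$ (controlled by $\|u\|_1$) enters the dissipation. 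Collecting the exact derivatives into a functional
\[
E_\ep(\zeta):=\|\zeta\|^2_{\mathcal{H}_\ep}+2\eta\langle u,u_t\rangle+2\eta\langle\delta,\delta_t\rangle_{L^2(\Gamma)}-2\eta\langle u,\delta\rangle_{L^2(\Gamma)}+2\int_\Omega F(u)\diff x+2\ep\int_\Gamma G(\delta)\diff\sigma,
\]
and invoking (\ref{from-f-assumption-1}) and (\ref{from-g-assumption-1}) on the terms $2\eta\langle f(u),u\rangle$ and $2\ep\eta\langle g(\delta),\delta\rangle_{L^2(\Gamma)}$, this produces a differential inequality of the form
\[
\frac{\diff}{\diff t}E_\ep(\zeta)+m_1\|\zeta\|^2_{\mathcal{H}_\ep}\le C,
\]
valid for almost all $t\ge0$.

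The key bookkeeping step is to verify the $\ep$-scaling of the two constants. With $\eta=\eta_0\ep$, the coercive lower bounds produced for the four components of $\|\zeta\|^2_{\mathcal{H}_\ep}$ are, respectively, of order $\eta_0\ep\mu_0\|u\|^2_1$, $\|u_t\|^2$, $\eta_0\mu_1\ep\,(\ep\|\delta\|^2_{L^2(\Gamma)})$ and $\ep\|\delta_t\|^2_{L^2(\Gamma)}$, so that $m_1=c_0\ep$ for some $c_0>0$ independent of $\ep$; likewise the residual constants $2\eta\kappa_f$ and $2\ep\eta\kappa_g$ give $C=c_1\ep$. Consequently the ratio $C/m_1=c_1/c_0$ governing the asymptotic size of the orbit is \emph{independent} of $\ep$, whereas the decay rate $m_1=c_0\ep\to0$ is not---this is precisely the source of the $\ep$-dependence of the absorbing time $t_{1\ep}$. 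Choosing $\eta_0$ small, a parallel computation shows that the cross terms in $E_\ep$ are dominated by $\|\zeta\|^2_{\mathcal{H}_\ep}$ uniformly in $\ep$ (here the weight $\ep$ on $\|\delta\|^2_{L^2(\Gamma)}$ in the norm matches the factor $\eta_0\ep$ in front of $\langle\delta,\delta_t\rangle_{L^2(\Gamma)}$), so that, together with (\ref{from-f-assumption-2}), (\ref{from-g-assumption-2}) and the growth bounds on $F,G$, the functional $E_\ep$ satisfies two-sided bounds $C_1'\|\zeta\|^2_{\mathcal{H}_\ep}-\kappa\le E_\ep(\zeta)\le Q(\|\zeta\|_{\mathcal{H}_\ep})$ with $C_1',\kappa$ and $Q$ independent of $\ep$.

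Finally I would feed this into the differential-inequality Lemma \ref{t:diff-ineq-1} exactly as in the proof for Problem (R): given $\|\zeta_0\|_{\mathcal{H}_\ep}\le R$ one obtains a time $t_{1\ep}=t_1(\ep,R)$ (growing like $\ep^{-1}$ through $m_1$) after which $E_\ep(S_\ep(t)\zeta_0)$ falls below $\sup\{E_\ep(\zeta):m_1\|\zeta\|^2_{\mathcal{H}_\ep}\le C\}$; since $C/m_1$ is uniform, the two-sided bounds then yield (\ref{acoustic-radius}) with $R_1$ independent of $\ep$. The set $\mathcal{B}_\ep$ in (\ref{abss-1}) is closed and bounded by definition, absorbing by (\ref{acoustic-radius}), and, being bounded, positively invariant by the same argument used for $\mathcal{B}_0$. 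The main obstacle is entirely in the second and third paragraphs: one must choose $\eta\propto\ep$ so that the perturbed functional stays uniformly equivalent to the norm and so that every surviving cross term is absorbed by dissipation carrying the \emph{same} power of $\ep$, which is what simultaneously delivers a uniform $R_1$ and an $\ep$-degenerate rate.
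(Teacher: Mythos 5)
Your proposal is correct and takes essentially the same route as the paper: the paper's proof is precisely the Lyapunov-functional argument of \cite[Theorem 2]{Frigeri10} with the $\ep$-scalings tracked (multiplier $\eta\propto\ep$, dissipation rate $m_1\sim\ep$, residual constant $\sim\ep$, ratio uniform), fed into Proposition \ref{t:diff-ineq-1}, exactly as your bookkeeping reproduces and as Remark \ref{r:time-1} records. One cosmetic imprecision: Proposition \ref{t:diff-ineq-1} bounds $E_\ep$ by $\sup\{E_\ep(\zeta):m_1\|\zeta\|^2_{\mathcal{H}_\ep}\le C+\iota\}$ with entry time $(r+R)/\iota$, so keeping $R_1$ uniform in $\ep$ forces the slack to be chosen as $\iota\propto\ep$ (the paper takes $\iota=m_1\ep$), and it is this choice of $\iota$---rather than $m_1$ directly---that produces $t_{1\ep}\sim\ep^{-1}$.
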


\begin{remark}  \label{r:time-1}
The proof of Lemma \ref{t:a-abs-set} utilizes Proposition \ref{t:diff-ineq-1}, and as such, the ``time of entry,'' $t_{1\ep},$ may be explicitly calculated in terms of the parameters in (\ref{from-f-assumption-2}) and (\ref{from-g-assumption-2}), and in terms of $R$ of course, as
\begin{align}
t_{1\ep}(\iota) & := \frac{1}{\iota}\left( C_{2}R(1+R^3) + \kappa_f+\ep\kappa_g \right).  \notag
\end{align}
Furthermore, after a calculation similar to the one given in Remark \ref{Robin-time}, we find the radii of $\mathcal{B}_\ep$ to be given by, for all $\ep\in(0,1]$ and $\iota>0,$
\begin{align}
R^2_{1\ep}(\iota) &:= \frac{C_2}{C_1} \left( \kappa_f+\ep\kappa_g+\frac{\iota}{m_1\ep} \right)^{1/2} \left( \kappa_f+\ep\kappa_g+1 + \left( \kappa_f+\ep\kappa_g+\frac{\iota}{m_1\ep} \right)^{3/2} \right).  \notag
\end{align}
Observe, in general, $R_{1\ep}\sim\ep^{-1}.$
However, we now choose $\iota=\iota_\ep=m_1 \ep.$
Then
\[
t_{1\ep}(m_1\ep):=\frac{1}{\ep m_1}\left( 1+\frac{C_2R(1+R^3)}{\kappa_f+\ep\kappa_g} \right).
\]
Hence, $t_{1\ep}\rightarrow+\infty$ as $\ep\rightarrow0$. 
But with such a choice of $\iota$, the radius $R_{1\ep}(m_1\ep)$ has a fixed upper-bound {\em{independent}} of $\ep$,
\begin{equation}  \label{g-attr-bound}
R^2_{1\ep}(m_1\ep):= \frac{C_2}{C_1} \left(\kappa_f+\ep\kappa_g+1\right)^{3/2} \left( 1 + \left( \kappa_f+\ep\kappa_g+1 \right)^{1/2} \right).
\end{equation} 
Compare this, and the limit as $\ep\rightarrow 0$, to the radius of $\mathcal{B}_0$ given in (\ref{reg-radii}).
\end{remark}

\subsection{Global attractors for Problem (A)}

Concerning the existence of global attractors, it is now assumed that (\ref{g-reg-ass-1}) holds; i.e., $g\equiv0$, (yet it is still assumed that $f$ only satisfy (\ref{f-assumption-1})-(\ref{f-assumption-2})). 
Hence, the corresponding acoustic boundary condition is now 
\begin{equation}\label{acoustic-boundary-2}
\left\{ \begin{array}{ll}
\delta_{tt} + \ep[\delta_t + \delta] = -u_t & \text{on} \ (0,\infty)\times\Gamma \\
\delta_t = \partial_{\bf{n}}u & \text{on} \ (0,\infty)\times\Gamma,
\end{array}\right.
\end{equation}
again supplemented with the initial conditions (\ref{acoustic-initial-conditions}).

By using asymptotic compactness methods, it can be shown, with some modifications to \cite{Frigeri10}, that the semiflow $S_\ep$ admits a global attractor in $\mathcal{H}_\ep$, for each $\ep\in(0,1]$; thus, defining a family of global attractors in $\mathcal{H}_\ep$ (cf. e.g. (\ref{gfam-1})). 
The continuity properties of this family of sets will be developed in the next section. 

The existence of a family of global attractors in $\mathcal{H}_\ep$ admitted by the semiflows $S_\ep$ for Problem (A) follows from \cite[Theorem 3.3]{Ball00} as in \S\ref{s:Robin} for Problem (R). 
We remind the reader that for this result, $g\equiv0,$ and Problem (A) is equipped with the linear boundary condition (\ref{acoustic-boundary-2}).

\begin{theorem}\label{t:acoustic-global}
Assume (\ref{f-assumption-1}), (\ref{f-assumption-2}), and (\ref{g-reg-ass-1}) hold.
For each $\ep\in(0,1]$, the dynamical system $(S_\ep(t),\mathcal{H}_\ep)$ admits a global attractor $\mathcal{A}_\ep$ in $\mathcal{H}_\ep$. 
The global attractor is invariant under the semiflow $S_\ep$ (both positively and negatively) and attracts all nonempty bounded subsets of $\mathcal{H}_\ep$; precisely, 
\begin{enumerate}
\item For each $t\geq 0$, $S_\ep(t)\mathcal{A}_\ep=\mathcal{A}_\ep$, and 
\item For every nonempty bounded subset $B$ of $\mathcal{H}_\ep$,
\[
\lim_{t\rightarrow\infty}{\rm{dist}}_{\mathcal{H}_\ep}(S_\ep(t)B,\mathcal{A}_\ep):=\lim_{t\rightarrow\infty}\sup_{\varphi\in B}\inf_{\theta\in\mathcal{A}_\ep}\|S_\ep(t)\varphi-\theta\|_{\mathcal{H}_\ep}=0.
\]
\end{enumerate}
The global attractor is unique and given by 
\[
\mathcal{A}_\ep=\omega(\mathcal{B}_\ep):=\bigcap_{s\geq 0}{\overline{\bigcup_{t\geq s} S_\ep(t)\mathcal{B}_\ep}}^{\mathcal{H}_\ep}.
\]
Furthermore, $\mathcal{A}_\ep$ is the maximal compact invariant subset in $\mathcal{H}_\ep$.
\end{theorem}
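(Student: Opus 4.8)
The plan is to verify the two structural hypotheses of Ball's abstract existence theorem \cite[Theorem 3.3]{Ball00} and then read off every asserted property of $\mathcal{A}_\ep$ from its conclusion. Since Lemma \ref{t:a-abs-set} already furnishes the closed, bounded, positively invariant absorbing set $\mathcal{B}_\ep$ of (\ref{abss-1}), the remaining work is to show that, for each fixed $\ep\in(0,1]$, the semiflow $S_\ep$ is (i) weakly sequentially continuous and (ii) asymptotically compact on $\mathcal{H}_\ep$. These are the exact analogues of Lemmas \ref{t:Robin-weakly-continuous} and \ref{t:Robin-asymptotic-compactness} for Problem (R), and I would establish them by transcribing the arguments of \cite[Proposition 2]{Frigeri10} (the $\ep=1$ case), holding $\ep$ fixed so that all constants are permitted to depend on $\ep$.

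For weak continuity I would fix $t\ge 0$ and a sequence $\zeta_{0n}\rightharpoonup\zeta_0$ in $\mathcal{H}_\ep$. The absorbing bound (\ref{acoustic-bound}) keeps $S_\ep(\cdot)\zeta_{0n}$ bounded in $C([0,t];\mathcal{H}_\ep)$, so along a subsequence one obtains a weak limit, and passing to the limit in the weak formulation (\ref{abs-2}) identifies it as $S_\ep(t)\zeta_0$. The only nonlinearity is $f(u)$, which is sequentially weakly continuous from $H^1(\Omega)$ to $L^2(\Omega)$ under (\ref{f-assumption-1})--(\ref{f-assumption-2}); the boundary components are governed by the linear system, since (\ref{g-reg-ass-1}) forces $g\equiv0$, and cause no difficulty. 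This is precisely the situation covered by \cite[Theorem 3.6]{Ball04}.

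The asymptotic compactness is the substantive step and the main obstacle. Because the growth restriction (\ref{f-assumption-1}) precludes the usual splitting of the solution operator into a decaying linear part and a compact nonlinear part, I would instead use Ball's energy method. Given a bounded sequence $\zeta_{0n}$ and times $t_n\to\infty$, the absorbing set makes $\zeta_n:=S_\ep(t_n)\zeta_{0n}$ bounded, hence weakly convergent along a subsequence to some $\zeta$; by a diagonal argument over the backward shifts $S_\ep(t_n-T)\zeta_{0n}$ together with the weak continuity just proved, $\zeta$ is realized as $S_\ep(T)$ of a weak limit for every fixed $T$. To upgrade to strong convergence in the Hilbert space $\mathcal{H}_\ep$ it suffices to prove convergence of the norms, and since $u_n\rightharpoonup u$ in $H^1(\Omega)$ forces $u_n\to u$ strongly in $L^2(\Omega)$ by Rellich, whence $\int_\Omega F(u_n)\,\diff x\to\int_\Omega F(u)\,\diff x$, this reduces to showing convergence of the full energy $\mathcal{E}_\ep(\zeta):=\|\zeta\|^2_{\mathcal{H}_\ep}+2\int_\Omega F(u)\,\diff x$. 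Weak lower semicontinuity gives $\mathcal{E}_\ep(\zeta)\le\liminf_n\mathcal{E}_\ep(\zeta_n)$; the reverse inequality $\limsup_n\mathcal{E}_\ep(\zeta_n)\le\mathcal{E}_\ep(\zeta)$ is extracted from the integrated energy equality (\ref{acoustic-energy-3}) on $[0,T]$, using weak lower semicontinuity of the dissipation integrals $\int_0^T(\|u_{n\tau}\|^2+\ep\|\delta_{n\tau}\|^2_{L^2(\Gamma)})\,\diff\tau$ and the monotonicity of $\mathcal{E}_\ep$ along the limiting complete trajectory. The delicate point is that the boundary unknowns $(\delta,\gamma)$ live in $L^2(\Gamma)\times L^2(\Gamma)$ with no compactifying embedding available, so their strong convergence is not free and must be wrung entirely out of the energy balance; this is exactly where the argument of \cite[Proposition 2]{Frigeri10} does the work, and it is simpler here only to the extent that $g\equiv0$ removes the boundary nonlinearity.

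With (i) and (ii) in hand, \cite[Theorem 3.3]{Ball00} applies and yields at once the existence of the global attractor $\mathcal{A}_\ep$, its full invariance $S_\ep(t)\mathcal{A}_\ep=\mathcal{A}_\ep$, the attraction property for all nonempty bounded $B\subset\mathcal{H}_\ep$, the identification $\mathcal{A}_\ep=\omega(\mathcal{B}_\ep)$, uniqueness, and maximality among compact invariant subsets of $\mathcal{H}_\ep$. I would close by remarking that, since $\ep$ is held fixed throughout, the constants produced are allowed to depend on $\ep$, and no uniformity in $\ep$ is asserted at this stage.
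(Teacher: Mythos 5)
Your proposal is correct and follows essentially the same route as the paper: the paper's proof likewise invokes Ball's generalized semiflow framework \cite[Theorem 3.3]{Ball00}, citing the bounded absorbing set from Lemma \ref{t:a-abs-set}, weak continuity via \cite[Theorem 3.6]{Ball04}, and asymptotic compactness generalized from \cite[Proposition 2]{Frigeri10} with $\ep\in(0,1]$ fixed. In fact your sketch of the energy-method step for asymptotic compactness (convergence of norms via the integrated energy identity, with the boundary components handled purely through the energy balance since $g\equiv0$) supplies more detail than the paper, which defers that argument entirely to Frigeri's work.
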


Again, the weak continuity of the semiflow $S_\ep$ in $\mathcal{H}_\ep$ follows from the general result from \cite[Theorem 3.6]{Ball04}.
The asymptotic compactness result is borrowed from \cite{Frigeri10}, and generalized from $\ep=1$ to the case where $\ep\in(0,1]$. 

\begin{lemma}
For each $\ep\in(0,1]$, the semiflow $S_\ep$ is weakly continuous on $\mathcal{H}_\ep$; i.e., for each $t\geq 0$,
\[
S_\ep(t)\zeta_{0n}\rightharpoonup S_\ep(t)\zeta_0  \ \text{in} \  \mathcal{H}_\ep ~\text{when}~\zeta_{0n}\rightharpoonup \zeta_0  \ \text{in} \ \mathcal{H}_\ep.
\]
\end{lemma}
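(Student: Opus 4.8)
The plan is to reproduce the abstract argument behind Ball's Theorem~3.6 in \cite{Ball04} in the present functional setting, the essential ingredients being the uniform dissipative bound from Theorem~\ref{t:ws-a}, the compact embedding $H^{1}(\Omega)\hookrightarrow\hookrightarrow L^{2}(\Omega)$, and the sequential weak continuity of the Nemytskii map $f\colon H^{1}(\Omega)\to L^{2}(\Omega)$ recorded earlier. Fix $t\geq 0$ and a sequence $\zeta_{0n}\rightharpoonup\zeta_0$ in $\mathcal{H}_\ep$; I must show $\langle S_\ep(t)\zeta_{0n},\xi\rangle_{\mathcal{H}_\ep}\to\langle S_\ep(t)\zeta_0,\xi\rangle_{\mathcal{H}_\ep}$ for every $\xi\in\mathcal{H}_\ep$. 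Since a weakly convergent sequence is bounded, $R:=\sup_n\|\zeta_{0n}\|_{\mathcal{H}_\ep}<\infty$, and the a priori bound (\ref{acoustic-bound}) then gives $\|S_\ep(s)\zeta_{0n}\|_{\mathcal{H}_\ep}\le Q(R)$ uniformly in $s\in[0,T]$ and in $n$. Recall that throughout this subsection $g\equiv 0$ by (\ref{g-reg-ass-1}), so the only nonlinear term to control is $f(u)$.

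First I would fix $T\ge t$ and write $\zeta_n:=S_\ep(\cdot)\zeta_{0n}=(u_n,\partial_t u_n,\delta_n,\partial_t\delta_n)$. The uniform bound makes $u_n$ bounded in $L^\infty(0,T;H^1(\Omega))$ with $\partial_t u_n$ bounded in $L^\infty(0,T;L^2(\Omega))$, and the boundary components analogously bounded; passing to a subsequence (not relabelled) I extract weak-$*$ limits $\zeta=(u,v,\delta,\gamma)$ in the natural spaces. The Aubin--Lions--Simon lemma, together with $H^1(\Omega)\hookrightarrow\hookrightarrow L^2(\Omega)$, upgrades this to strong convergence $u_n\to u$ in $C([0,T];L^2(\Omega))$. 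In particular $u_n(s)\rightharpoonup u(s)$ in $H^1(\Omega)$ for each $s$, so the sequential weak continuity of $f$ gives $f(u_n(s))\rightharpoonup f(u(s))$ in $L^2(\Omega)$, and the growth bound (\ref{f-assumption-1}) provides the uniform integrability needed to conclude $\mathcal{G}_\ep(\zeta_n)\rightharpoonup\mathcal{G}_\ep(\zeta)$ in $L^2(0,T;\mathcal{H}_\ep)$.

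Next I would pass to the limit in the variation-of-constants formula
\[
\zeta_n(t)=e^{\mathrm{A}_\ep t}\zeta_{0n}+\int_0^t e^{\mathrm{A}_\ep(t-s)}\mathcal{G}_\ep(\zeta_n(s))\,\diff s.
\]
The bounded linear propagator satisfies $e^{\mathrm{A}_\ep t}\zeta_{0n}\rightharpoonup e^{\mathrm{A}_\ep t}\zeta_0$, and the convolution $h\mapsto\int_0^t e^{\mathrm{A}_\ep(t-s)}h(s)\,\diff s$ is a bounded linear operator from $L^2(0,T;\mathcal{H}_\ep)$ into $\mathcal{H}_\ep$, hence preserves the weak convergence of $\mathcal{G}_\ep(\zeta_n)$ established above. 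Thus the limit $\zeta$ is itself the weak/mild solution issuing from $\zeta_0$, and by the uniqueness part of Theorem~\ref{t:ws-a} one has $\zeta=S_\ep(\cdot)\zeta_0$, so $\zeta_n(t)\rightharpoonup S_\ep(t)\zeta_0$ along the subsequence. Since the limit does not depend on the chosen subsequence, the standard subsequence principle (every subsequence admits a further subsequence with the same weak limit) yields weak convergence of the full sequence, completing the proof.

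The step I expect to be the main obstacle is the passage to the limit in the nonlinear term. The growth condition (\ref{f-assumption-1}) is exactly critical for the Sobolev exponent in three dimensions, so one cannot fall back on higher integrability of $f(u_n)$; it is precisely here that the interplay between the compact embedding $H^{1}\hookrightarrow\hookrightarrow L^{2}$ and the sequential weak continuity of the Nemytskii operator must be invoked rather than any Lipschitz or strong-continuity estimate, and it is this feature that forces the use of Ball's generalized-semiflow machinery in place of the usual decay--plus--compact decomposition.
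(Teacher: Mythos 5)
Your proposal is correct and follows essentially the same route as the paper: the paper's proof is simply the citation ``for each fixed $\ep\in(0,1]$, the proof from \cite[Theorem 3.6]{Ball04} applies to Problem (A),'' and your argument is a faithful reconstruction of that proof --- uniform bounds from the energy estimate, Aubin--Lions compactness to get $u_n\to u$ in $C([0,T];L^2(\Omega))$, sequential weak continuity of the Nemytskii map $f\colon H^1(\Omega)\to L^2(\Omega)$ (which the paper records via \cite[Lemma 3.3]{Ball04}, and which is where $g\equiv 0$ matters, as you correctly note), identification of the limit through the weak/mild formulation, uniqueness, and the subsequence principle. The only cosmetic difference is that you pass to the limit in the variation-of-constants formula rather than in the distributional weak form, but these are equivalent by the paper's own definitions, so the substance of the argument is identical.
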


\begin{proof}
For each fixed $\ep\in(0,1],$ the proof from \cite[Theorem 3.6]{Ball04} applies to Problem (A).
\end{proof}

\begin{lemma}  \label{aac}
For each $\ep\in(0,1]$, the semiflow $S_\ep$ is asymptotically compact in $\mathcal{H}_\ep$; i.e., if $\zeta_{0n}=(u_{0n},u_{1n},\delta_{0n},\delta_{1n})$ is any bounded sequence in $\mathcal{H}_\ep$ and if $t_n$ is any sequence such that $t_n\rightarrow\infty$ as $n\rightarrow \infty$, then the sequence $\zeta^{(n)}(t_n) = S_\ep(t_n)\zeta_{0n}$ has a convergent subsequence.
\end{lemma}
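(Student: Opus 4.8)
The plan is to follow Ball's energy method (as in \cite[Proposition 2]{Frigeri10} and \cite{Ball00,Ball04}), upgrading a weak limit to a strong one by establishing convergence of the norms; throughout we work with $g\equiv0$, so that the boundary nonlinearity drops out of the energy equation (\ref{acoustic-energy-3}). First I would dispose of the bounded sequence: since $\{\zeta_{0n}\}$ is bounded and $t_n\to\infty$, for $n$ large the states $\zeta^{(n)}(t_n)=S_\ep(t_n)\zeta_{0n}$ lie in the absorbing set $\mathcal{B}_\ep$ of Lemma \ref{t:a-abs-set}, hence are uniformly bounded in $\mathcal{H}_\ep$. Extracting a subsequence (not relabeled), we obtain a weak limit $\zeta^{(n)}(t_n)\rightharpoonup\zeta$ in $\mathcal{H}_\ep$. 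Because $\mathcal{H}_\ep$ is Hilbert, it suffices to prove the convergence of norms $\|\zeta^{(n)}(t_n)\|_{\mathcal{H}_\ep}\to\|\zeta\|_{\mathcal{H}_\ep}$; weak lower semicontinuity already gives the inequality $\|\zeta\|_{\mathcal{H}_\ep}\le\liminf_n\|\zeta^{(n)}(t_n)\|_{\mathcal{H}_\ep}$, so the content is the matching upper bound on the $\limsup$.

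Second, I would split the norm into a compact part and a non-compact part. Writing $\zeta^{(n)}(t_n)=(u^{(n)},v^{(n)},\delta^{(n)},\gamma^{(n)})$, the weak convergence $u^{(n)}\rightharpoonup u$ in $H^1(\Omega)$ together with the compact embedding $H^1(\Omega)\hookrightarrow\hookrightarrow L^p(\Omega)$ for $p<6$ and the growth bound (\ref{f-assumption-1}) yields $\int_\Omega F(u^{(n)})\,\diff x\to\int_\Omega F(u)\,\diff x$. Consequently the convergence of $\|\zeta^{(n)}(t_n)\|_{\mathcal{H}_\ep}$ is equivalent to the convergence of the total energies $E_\ep(\zeta^{(n)}(t_n))$, where $E_\ep(\zeta):=\|\zeta\|^2_{\mathcal{H}_\ep}+2\int_\Omega F(u)\,\diff x$ is the (weakly lower semicontinuous) quantity appearing in (\ref{acoustic-energy-3}). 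The remaining terms $\|\nabla u^{(n)}\|^2$, $\|v^{(n)}\|^2$, $\ep\|\delta^{(n)}\|^2_{L^2(\Gamma)}$ and $\|\gamma^{(n)}\|^2_{L^2(\Gamma)}$ carry \emph{no} compactness --- in particular the two boundary components live only in $L^2(\Gamma)$ --- which is exactly why a direct compact-decomposition argument fails and the energy method is forced upon us.

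Third, I would construct a complete bounded trajectory through $\zeta$. For each fixed $\tau\ge0$ the states $S_\ep(t_n-\tau)\zeta_{0n}$ are bounded (they too lie eventually in $\mathcal{B}_\ep$), so a diagonal extraction over $\tau\in\mathbb{Q}_+$ produces a map $\eta:(-\infty,0]\to\mathcal{H}_\ep$ with $S_\ep(t_n-\tau)\zeta_{0n}\rightharpoonup\eta(-\tau)$ and $\eta(0)=\zeta$. The weak continuity of $S_\ep$ established in the preceding lemma guarantees $\eta(t)=S_\ep(t-s)\eta(s)$ for $t\ge s$, so $\eta$ is a genuine bounded complete solution, on which the energy equation (\ref{acoustic-energy-3}) holds as an \emph{equality}. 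I would then apply (\ref{acoustic-energy-3}) on $[t_n-\tau,t_n]$ to the approximations and on $[-\tau,0]$ to $\eta$, and pass to the limit: weak lower semicontinuity of the dissipation integrals $\int(\|u_t\|^2+\ep\|\delta_t\|^2_{L^2(\Gamma)})$ bounds the limiting dissipation from below, and balancing this against the energy equality for $\eta$ and the lower semicontinuity of $E_\ep$ yields $\limsup_n E_\ep(\zeta^{(n)}(t_n))\le E_\ep(\zeta)$, hence the desired norm convergence. Once asymptotic compactness is in hand, the existence of $\mathcal{A}_\ep$ follows from \cite[Theorem 3.3]{Ball00}, as asserted after the statement of Theorem \ref{t:acoustic-global}.

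The hard part will be precisely this last passage to the limit in the energy equation, since weak lower semicontinuity controls both $E_\ep$ and the dissipation from the same (``wrong'') side; closing the estimate is the delicate step that genuinely requires the energy equality along the limiting trajectory $\eta$ rather than a mere inequality, and that exploits that for fixed $\ep>0$ the weighted norm $\|\cdot\|_{\mathcal{H}_\ep}$ is a fixed Hilbert norm equivalent to $\|\cdot\|_{\mathcal{H}}$. This is exactly the $\ep=1$ computation of \cite[Proposition 2]{Frigeri10}; the only new bookkeeping is that the $\ep$-weight multiplies the $\delta$-component and the boundary dissipation, so every constant produced is permitted to depend on $\ep$ --- harmless here since $\ep\in(0,1]$ is fixed, and fully consistent with the fact (cf. Remark \ref{r:time-1}) that it is uniformity in $\ep$ which later breaks down.
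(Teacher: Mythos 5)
Your scaffolding is the right one, and it is the route the paper itself takes: the paper gives no computation for Lemma \ref{aac}, but borrows \cite[Proposition 2]{Frigeri10} (Ball's energy method) generalized from $\ep=1$ to $\ep\in(0,1]$. Your steps~1--3 are all sound: absorption into $\mathcal{B}_\ep$, extraction of a weak limit, the Radon--Riesz reduction to norm convergence, the compactness of $u\mapsto\int_\Omega F(u)\,\diff x$ via $H^1(\Omega)\hookrightarrow\hookrightarrow L^4(\Omega)$ and (\ref{f-assumption-1}), the diagonal construction of the complete bounded trajectory $\eta$, and the use of weak continuity of $S_\ep$ to make $\eta$ a genuine solution satisfying the energy \emph{equality}. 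The problem is that the closing computation you describe is not the one in \cite[Proposition 2]{Frigeri10}, and as stated it does not close.

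Concretely: applying the plain energy identity (\ref{acoustic-energy-3}) on $[t_n-\tau,t_n]$ and on $[-\tau,0]$, and using weak lower semicontinuity of the dissipation integrals, what you actually obtain is
\begin{equation*}
\limsup_{n}E_\ep(\zeta^{(n)}(t_n)) \le E_\ep(\eta(0)) + \Bigl( \limsup_{n}E_\ep(\zeta^{(n)}(t_n-\tau)) - E_\ep(\eta(-\tau)) \Bigr),
\end{equation*}
and the bracketed remainder is \emph{nonnegative} by weak lower semicontinuity of $E_\ep$, with no mechanism to force it to zero as $\tau\to\infty$: the far-past energies $E_\ep(\zeta^{(n)}(t_n-\tau))$ are only known to be bounded by the absorbing-set constant, which may strictly exceed $\sup_\tau E_\ep(\eta(-\tau))$. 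Both $E_\ep$ and the dissipation are controlled from the same (wrong) side, exactly the obstruction you flagged, and ``balancing'' cannot resolve it. The actual mechanism in Ball/Frigeri is to multiply the interior equation by $2u_t+\kappa u$ and the boundary equation by $2\delta_t+\kappa\delta$, producing a perturbed functional of the type $J_\kappa(\zeta)=\|\zeta\|^2_{\mathcal{H}_\ep}+2\int_\Omega F(u)\,\diff x+\kappa\langle u,v\rangle+\kappa\ep\langle\delta,\gamma\rangle_{L^2(\Gamma)}$ (compare $E_0$ in (\ref{Robin-functional-0})) satisfying an identity $\frac{\diff}{\diff t}J_\kappa+\kappa J_\kappa=\Phi_\kappa$, where $\Phi_\kappa$ splits into sequentially weakly continuous terms (the $F$- and $\langle f(u),u\rangle$-terms, via compact embedding) and quadratic terms carrying a \emph{favorable} sign for the $\limsup$; the variation-of-constants representation
\begin{equation*}
J_\kappa(\zeta^{(n)}(t_n)) = e^{-\kappa\tau}J_\kappa(\zeta^{(n)}(t_n-\tau)) + \int_{-\tau}^{0}e^{\kappa s}\,\Phi_\kappa(\zeta^{(n)}(t_n+s))\,\diff s
\end{equation*}
then multiplies the troublesome far-past term by $e^{-\kappa\tau}$, which kills it as $\tau\to\infty$. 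For fixed $\ep\in(0,1]$ the admissible $\kappa$ and all constants may depend on $\ep$ (the boundary damping $\ep\delta_t$ and the boundary dissipation degenerate as $\ep\to0$), which is consistent with your final remark; but you should state the proof with $J_\kappa$ and the exponential factor, not with (\ref{acoustic-energy-3}) alone.
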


\begin{proof}[Proof of Theorem \ref{t:acoustic-global}]
We apply the method of generalized semiflows due to Ball \cite{Ball00,Ball04}.
This result follows directly from the existence of a bounded absorbing set $\mathcal{B}_\ep$, due to Lemma \ref{t:a-abs-set}, and the asymptotic compactness of the semiflow $S_\ep$, proven in Lemma \ref{aac}.
\end{proof}

\subsection{Optimal regularity for $\mathcal{A}_\ep$}

This section discusses the asymptotic compactness result for the weak solutions to Problem (A), assuming the following (\ref{f-assumption-2}), (\ref{g-reg-ass-1}), (\ref{f-reg-ass-2}), and (\ref{f-reg-ass-3}) hold. 
The results directly follow the presentation in \cite{Frigeri10} with modifications to include the perturbation parameter $\ep\in(0,1].$

\begin{theorem}  \label{optimal-a}
Assume (\ref{f-assumption-2}), (\ref{g-reg-ass-1}), (\ref{f-reg-ass-2}), and (\ref{f-reg-ass-3}) hold.
For each $\ep\in(0,1]$, there exists a closed and bounded subset $\mathcal{U}_\ep\subset \mathcal{D}_\ep$, such that for every nonempty bounded subset $B\subset \mathcal{H}_\ep$,
\begin{equation}
\mathrm{dist}_{\mathcal{H}_\ep}(S_\ep(t)B,\mathcal{U}_\ep) \le Q_\ep(\left\Vert B\right\Vert _{\mathcal{H}_\ep}) e^{-\omega_{4 \ep} t},  \label{tran-2}
\end{equation}
for some nonnegative monotonically increasing function $Q_{\ep}(\cdot)$ and for some positive constant $\omega_{4 \ep}>0$, both depending on $\ep$ where $Q_\ep(\cdot)\sim\ep^{-1}$ and $\omega_{4\ep}\sim\ep^{-1}.$
\end{theorem}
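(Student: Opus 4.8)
The plan is to adapt the $H^2$-elliptic-regularity/time-differentiation scheme of \cite{Pata&Zelik06}, as carried out for the $\ep=1$ case in \cite{Frigeri10} and for the limit problem in Theorem~\ref{exp-attr-r-1}, to the $\ep$-weighted setting, keeping explicit track of how each constant degenerates as $\ep\to0$. By Lemma~\ref{t:a-abs-set} the set $\mathcal{B}_\ep$ is bounded, absorbing, and exponentially attracting for $S_\ep$, so by the transitivity of exponential attraction (Lemma~\ref{t:exp-attr}) it suffices to produce a set $\mathcal{U}_\ep$, closed and bounded in $\mathcal{D}_\ep$, that attracts $\mathcal{B}_\ep$ exponentially fast in the $\mathcal{H}_\ep$-metric. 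Since $\mathcal{D}_\ep$ embeds compactly into $\mathcal{H}_\ep$ (Rellich on $\Omega$ together with the compact trace embedding $H^{1/2}(\Gamma)\hookrightarrow\hookrightarrow L^2(\Gamma)$), such a $\mathcal{U}_\ep$ is automatically compact in $\mathcal{H}_\ep$, which is precisely the asserted optimal regularity. Throughout I would work with strong solutions issuing from $\mathcal{B}_\ep\cap\mathcal{D}_\ep$ and pass to general data at the end via density and the continuous dependence estimate (\ref{acoustic-continuous-dependence}).

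The heart of the argument is a higher-order dissipative estimate obtained by differentiating the system in time. Writing $w=u_t$, $\eta=\delta_t$ and using $g\equiv0$, the pair $(w,\eta)$ satisfies, with $\eta_t=\partial_{\bf{n}}w$ on $\Gamma$,
\begin{equation*}
w_{tt}+w_t-\Delta w+w+f'(u)w=0 \ \text{in} \ \Omega, \qquad \eta_{tt}+\ep[\eta_t+\eta]=-w_t \ \text{on} \ \Gamma,
\end{equation*}
which is structurally identical to Problem (A) with the nonlinearity replaced by the linear-in-$w$ term $f'(u)w$. Multiplying the first equation by $2(w_t+\beta w)$ in $L^2(\Omega)$ with $\beta>0$ small, and the boundary equation by $2\eta_t$ in $L^2(\Gamma)$, then summing, I obtain a differential inequality for a functional $\Psi(t)$ equivalent to $\|w\|^2_1+\|w_t\|^2+\ep\|\eta\|^2_{L^2(\Gamma)}+\|\eta_t\|^2_{L^2(\Gamma)}$. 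The term $\beta\langle f'(u)w,w\rangle$ is bounded below by $-\beta\ell_2\|w\|^2$ through (\ref{f-reg-ass-3}), while the product $\langle f'(u)w,w_t\rangle$ is rewritten as an exact time derivative plus a commutator $\tfrac12\langle f''(u)u_t\,w,w\rangle$. The latter is the delicate term: using (\ref{f-reg-ass-2}), the uniform $\mathcal{H}_\ep$-bound (\ref{acoustic-bound}), and Hölder/Sobolev on $\Omega\subset\mathbb{R}^3$, it is controlled by $C\|u_t\|\,\Psi$, whose coefficient $\|u_t(t)\|$ is square-integrable in time by the first energy identity (\ref{acoustic-energy-3}).

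Two points require care and are the source of the $\ep$-dependence; I expect the second to be the main obstacle. First, since $\zeta_0\in\mathcal{H}_\ep$ only, $w_t(0)=u_{tt}(0)$ need not lie in $L^2(\Omega)$, so $\Psi(0)$ is not controlled; following Pata--Zelik I would multiply the differentiated energy by the weight $t$ and integrate, using the integrability $\int_0^\infty(\|u_t\|^2+\ep\|\delta_t\|^2_{L^2(\Gamma)})\,\diff t<\infty$ from (\ref{acoustic-energy-3}) both to remove the dependence on $\Psi(0)$ and to absorb the superlinear commutator via a uniform (singular) Gr\"onwall lemma, obtaining a bound on $\Psi(t)$ for $t\ge1$. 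Second, the only dissipation on the boundary block is the $\ep$-weighted term $\ep\|\eta_t\|^2_{L^2(\Gamma)}$, while the boundary oscillator carries the small damping coefficient $\ep$; consequently the coercivity constant extracted from $\Psi$ and the source term both degenerate, and these $\ep$-weights propagate exactly as the radii and entry times did in Remark~\ref{r:time-1}, yielding the stated scaling $Q_\ep(\cdot)\sim\ep^{-1}$ and $\omega_{4\ep}\sim\ep^{-1}$. This degeneracy is genuine rather than a slack estimate: it is the reason the bound on $\mathcal{U}_\ep$ in $\mathcal{D}_\ep$, and hence on the fractal dimension, is not uniform in $\ep$.

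Finally, with $\Psi(t)$ bounded I read off $u_t\in H^1(\Omega)$ and $u_{tt}\in L^2(\Omega)$, whence the elliptic problem
\[
-\Delta u+u=-u_{tt}-u_t-f(u)\in L^2(\Omega), \qquad \partial_{\bf{n}}u=\delta_t \ \text{on} \ \Gamma,
\]
has $L^2(\Omega)$ right-hand side by (\ref{f-assumption-1}) and $H^1(\Omega)\hookrightarrow L^6(\Omega)$. Elliptic regularity for the Neumann-type condition places $u\in H^2(\Omega)$, with the $H^{1/2}(\Gamma)$-regularity of the Neumann datum $\delta_t=\partial_{\bf{n}}u$, the trace map $\trD:H^2(\Omega)\rightarrow H^{3/2}(\Gamma)$, and the boundary equation establishing $\delta,\delta_t\in H^{1/2}(\Gamma)$ through the coupled elliptic-trace bootstrap of \cite{Frigeri10}. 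Collecting these bounds defines $\mathcal{U}_\ep$ as a closed ball of $\mathcal{D}_\ep$ of $\ep^{-1}$-order radius, and combining the uniform bound on $\Psi$ for $t\ge1$ with the exponential decay furnished by Proposition~\ref{t:diff-ineq-1} yields (\ref{tran-2}) with the claimed $\ep$-dependent $Q_\ep$ and $\omega_{4\ep}$, completing the proof.
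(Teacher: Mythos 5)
There is a genuine gap, and it sits at the center of your argument: you run the time-differentiated energy estimate on the \emph{full} trajectory $S_\ep(t)\zeta_0$ for arbitrary $\zeta_0\in\mathcal{H}_\ep$, and you propose to remove the uncontrolled quantity $\Psi(0)$ by multiplying the differential inequality by the weight $t$. That device cannot work here, because the damped wave flow does not regularize. Concretely, after multiplying by $t$ you must control $\int_0^t\Psi(s)\,\diff s$, i.e. time-integrals of $\|u_t\|^2_1$ and $\|u_{tt}\|^2$, whereas the only integrability supplied by the energy identity (\ref{acoustic-energy-3}) is $\int_0^\infty\bigl(\|u_t\|^2+\ep\|\delta_t\|^2_{L^2(\Gamma)}\bigr)\diff t<\infty$ --- one full derivative short, with no parabolic cascade available to bootstrap it. Indeed, no estimate of the form $\|S_\ep(t)\zeta_0\|_{\mathcal{D}_\ep}\le C$ for $t\ge1$ and all $\zeta_0\in\mathcal{B}_\ep$ can hold: the linear part is a bounded perturbation of the skew-adjoint acoustic generator of \cite{Beale76}, hence generates a $C_0$-group, so $e^{\mathrm{A}_\ep t}$ is invertible on $\mathcal{H}_\ep$ and singularities of the data propagate rather than smooth out; a map carrying bounded sets of $\mathcal{H}_\ep$ into bounded sets of $\mathcal{D}_\ep\hookrightarrow\hookrightarrow\mathcal{H}_\ep$ would force compactness the flow does not have. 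Your fallback --- prove the bound for data in $\mathcal{B}_\ep\cap\mathcal{D}_\ep$ and pass to rough data by density and (\ref{acoustic-continuous-dependence}) --- also fails for the attraction estimate (\ref{tran-2}), since the Lipschitz constant $e^{\nu_1 t}$ grows in time while the $\mathcal{D}_\ep$-bounds of the approximating trajectories blow up with $\|\zeta_0^n\|_{\mathcal{D}_\ep}$; no $t$-uniform estimate survives the limit.

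The paper supplies exactly the missing mechanism: a splitting of the \emph{semiflow}, not of the time axis. With $\psi(s):=f(s)+\beta s$, $\beta\ge\ell_2$, so that $\psi'\ge0$ by (\ref{f-reg-ass-3}) (see (\ref{beta})), one writes $S_\ep(t)\zeta_0=Z_\ep(t)\zeta_0+K_\ep(t)\zeta_0$, where the decaying part solves (\ref{pde-v}) carrying the rough data $\zeta_0$ together with the \emph{monotone} difference $\psi(u)-\psi(w)$, and the regular part solves (\ref{pde-w}) with $\chi(0)=\mathbf{0}$. Monotonicity makes the first component contract: $\|Z_\ep(t)\zeta_0\|_{\mathcal{H}_\ep}\le Q_\ep(R)e^{-\omega_{5\ep}t}$ with $\omega_{5\ep}\sim\ep$ (Lemma \ref{t:uniform-decay}); and the higher-order/commutator/elliptic-regularity machinery you describe --- which is in itself the right toolkit, and is indeed how \cite{Pata&Zelik06} and \cite{Frigeri10} proceed --- is run on $\chi$ alone, for which the initial higher-order energy vanishes, so the $\Psi(0)$ obstruction you tried to defeat with the $t$-weight never arises (Lemmas \ref{t:uniform-bound-w} and \ref{compact-a}, giving $\|K_\ep(t)\zeta_0\|^2_{\mathcal{D}_\ep}\le Q(R)e^{-\omega_{6\ep}t}+R_{2\ep}$ with $R_{2\ep}\sim\ep^{-1}$). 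Taking $\mathcal{U}_\ep$ to be a closed ball in $\mathcal{D}_\ep$ of radius of order $R_{2\ep}$, the estimate (\ref{tran-2}) then follows from the exponential decay of $Z_\ep$ (combined with Proposition \ref{t:exp-attr} to pass from $\mathcal{B}_\ep$ to arbitrary bounded $B$), not from Proposition \ref{t:diff-ineq-1}, which only yields absorption. Your peripheral points --- the reduction to $\mathcal{B}_\ep$ by transitivity, the elliptic step $-\Delta u+u\in L^2(\Omega)$ with Neumann datum $\delta_t$, and the diagnosis that the $\ep$-degeneracy originates in the weak boundary damping --- are sound and match the paper, but without the $Z_\ep+K_\ep$ decomposition the proof does not close.
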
 

Hence, we immediately have the following

\begin{corollary}
For each $\ep\in(0,1]$, the global attractor $\mathcal{A}_\ep$ admitted by the semiflow $S_\ep$ satisfies 
\begin{equation*}
\mathcal{A}_\ep\subset\mathcal{U}_\ep.
\end{equation*}
Consequently, for each $\ep\in(0,1]$, the global attractor $\mathcal{A}_\ep$ is bounded in $\mathcal{D}_\ep$ and consists only of strong solutions. 
\end{corollary}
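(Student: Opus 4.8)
The plan is to mirror the short argument given for Problem (R) immediately preceding this corollary, combining the exponential attraction estimate (\ref{tran-2}) with the strict invariance of the global attractor. Fix $\ep\in(0,1]$. The key observation is that $\mathcal{A}_\ep$, furnished by Theorem \ref{t:acoustic-global}, is itself a nonempty bounded subset of $\mathcal{H}_\ep$: by Lemma \ref{t:a-abs-set} it is contained in the absorbing ball $\mathcal{B}_\ep$, so $\|\mathcal{A}_\ep\|_{\mathcal{H}_\ep}\le R_1$. Hence Theorem \ref{optimal-a} applies with the admissible choice $B=\mathcal{A}_\ep$.

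First I would invoke the invariance property $S_\ep(t)\mathcal{A}_\ep=\mathcal{A}_\ep$ for all $t\ge0$ (item (1) of Theorem \ref{t:acoustic-global}). Substituting $B=\mathcal{A}_\ep$ into (\ref{tran-2}) and using invariance on the left-hand side yields, for every $t\ge0$,
\[
\dist_{\mathcal{H}_\ep}(\mathcal{A}_\ep,\mathcal{U}_\ep)=\dist_{\mathcal{H}_\ep}(S_\ep(t)\mathcal{A}_\ep,\mathcal{U}_\ep)\le Q_\ep(\|\mathcal{A}_\ep\|_{\mathcal{H}_\ep})e^{-\omega_{4\ep}t}.
\]
Letting $t\to\infty$ forces $\dist_{\mathcal{H}_\ep}(\mathcal{A}_\ep,\mathcal{U}_\ep)=0$. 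Since $\mathcal{U}_\ep$ is closed in $\mathcal{H}_\ep$, the vanishing of this Hausdorff semidistance gives the inclusion $\mathcal{A}_\ep\subset\mathcal{U}_\ep$ directly.

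Because $\mathcal{U}_\ep\subset\mathcal{D}_\ep$ is bounded in $\mathcal{D}_\ep$ by construction, the inclusion just established shows $\mathcal{A}_\ep$ is bounded in $\mathcal{D}_\ep$ as well. To conclude that $\mathcal{A}_\ep$ consists only of strong solutions, I would use that every point of $\mathcal{A}_\ep$ now lies in $\mathcal{D}_\ep$: by the strong-solution part of Theorem \ref{t:ws-a}, the complete trajectory through any such point is a strong solution, invariance keeps the whole trajectory inside $\mathcal{A}_\ep\subset\mathcal{D}_\ep$, and the uniform $\mathcal{D}_\ep$-bound supplies the regularity (\ref{acoustic-regularity-property}).

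The only delicate point is the precise meaning of ``closed'' in Theorem \ref{optimal-a}: one needs $\mathcal{U}_\ep$ closed in the $\mathcal{H}_\ep$-topology for zero semidistance to yield containment. Should $\mathcal{U}_\ep$ be only $\mathcal{D}_\ep$-closed, I would instead pass to its closure in $\mathcal{H}_\ep$, which remains bounded in $\mathcal{D}_\ep$ by weak lower semicontinuity of the $\mathcal{D}_\ep$-norm (equivalently, via the compact embedding $\mathcal{D}_\ep\hookrightarrow\hookrightarrow\mathcal{H}_\ep$, so that a $\mathcal{D}_\ep$-bounded, $\mathcal{H}_\ep$-closed set is $\mathcal{H}_\ep$-compact), and then run the identical argument. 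This is the main---though essentially routine---obstacle; everything else is a direct transcription of the Problem (R) computation.
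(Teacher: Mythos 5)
Your proposal is correct and matches the paper's intent: the paper proves this corollary by the ``standard arguments in the theory of attractors'' (cf.\ the remark before the Problem (R) analogue, citing Hale and Temam), which is exactly the argument you spell out --- substitute $B=\mathcal{A}_\ep$ into (\ref{tran-2}), use the invariance $S_\ep(t)\mathcal{A}_\ep=\mathcal{A}_\ep$, let $t\to\infty$, and conclude from closedness of $\mathcal{U}_\ep$. Your closing caveat about $\mathcal{H}_\ep$-closedness (and the fallback via the compact embedding $\mathcal{D}_\ep\hookrightarrow\mathcal{H}_\ep$) is a sound and careful reading of the same standard argument, not a deviation from it.
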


The proof of Theorem \ref{optimal-a} proceeds along the usual lines; whereby decomposing the semiflow $S_\ep$ into two parts, one which decays (exponentially) to zero, and one part which is precompact. 
Recall, since fractional powers of the Laplacian with acoustic boundary conditions are undefined, the precompactness property will be earned through the application of $H^2$-elliptic regularity results. 
Define 
\begin{equation}  \label{beta}
\psi (s):=f(s)+\beta s
\end{equation}
for some constant $\beta \ge \ell_2$ to be determined later (observe, $\psi'(s)\ge 0$ thanks to assumption (\ref{f-reg-ass-3})). 
Set $\Psi (s):=\int_{0}^{s}\psi (\sigma )\diff\sigma$. 
Let $\zeta_0=(u_0,u_1,\delta_0,\delta_1)\in\mathcal{H}_\ep$. 
Let $\zeta(t)=(u(t),u_t(t),\delta(t),\delta_t(t))$ denote the corresponding global solution of Problem (A) on $[0,\infty)$ with the initial data $\zeta_0$. 
We then decompose Problem (A) into the following systems of equations.
For all $t\ge0$, set 
\begin{align}
\zeta(t) & = (u(t),u_t(t),\delta(t),\delta_t(t))  \notag \\ 
& = \left( v(t),v_t(t),\gamma(t),\gamma_t(t) \right) + \left( w(t),w_t(t),\theta(t),\theta_t(t) \right)  \notag \\
& =: \xi(t) + \chi(t)  \notag 
\end{align}
Then $\xi$ and $\chi$ satisfy the IBVPs,
\begin{equation}
\left\{ 
\begin{array}{ll}  \label{pde-v}
v_{tt} + v_{t} - \Delta v + v + \psi (u) - \psi (w) = 0 & \text{in}\quad(0,\infty
)\times \Omega, \\ 
\gamma_{tt} + \ep [ \gamma_t + \gamma ] = -v_t & \text{on}\quad(0,\infty)\times \Gamma, \\ 
\gamma_t = \partial_{\bf{n}}v & \text{on}\quad(0,\infty )\times \Gamma, \\ 
\xi(0) = \zeta_0 & \text{at}\quad\{0\}\times{\overline{\Omega}}, \\ 
\end{array}
\right.\end{equation}
and, respectively, 
\begin{equation}
\left\{ 
\begin{array}{ll}  \label{pde-w}
w_{tt} + w_{t} - \Delta w + w + \psi (w) = \beta u & \text{in}\quad(0,\infty)\times \Omega, \\ 
\theta_{tt} + \ep [ \theta_t + \theta ] = -w_t & \text{on}\quad(0,\infty)\times \Gamma, \\ 
\theta_{t} = \partial_{\bf{n}}w & \text{on}\quad(0,\infty)\times \Gamma, \\ 
\chi(0) = {\bf{0}} & \text{at}\quad\{0\}\times{\overline{\Omega}}.
\end{array} \right.
\end{equation}
In view of Lemmas \ref{t:uniform-bound-w} and \ref{t:uniform-decay} below, we define the one-parameter family of maps, $K_\ep(t):\mathcal{H}_\ep\rightarrow \mathcal{H}_\ep$, by 
\begin{equation*}
K_\ep(t)\zeta_{0} := \xi(t),
\end{equation*}
where $\xi(t)$ is a solution of (\ref{pde-w}). 
With such $\xi(t)$, we may define a second function $\chi(t)$ for all $t\ge 0$ as the solution of (\ref{pde-v}). 
Through the dependence of $\xi$ on $\chi$ and $\zeta_{0}$, the solution of (\ref{pde-v}) defines a one-parameter family of maps, $Z_\ep(t):\mathcal{H}_{\varepsilon}\rightarrow \mathcal{H}_\ep$, defined by 
\begin{equation*}
Z_\ep(t)\zeta_{0} := \chi(t).
\end{equation*}
Notice that if $\xi$ and $\chi$ are solutions to (\ref{pde-v}) and (\ref{pde-w}), respectively, then the function $\zeta := \xi + \chi$ is a solution to the
original Problem (A), for each $\ep\in(0,1]$.

The first lemma shows that the operators $K_\ep$ are bounded in $\mathcal{H}_\ep$, uniformly with respect to $\varepsilon$. 
The result largely follows from the existence of a bounded absorbing set $\mathcal{B}_\ep$ in $\mathcal{H}_\ep$ for $S_\ep$ (recall (\ref{acoustic-bound})).

\begin{lemma}  \label{t:uniform-bound-w} 
For each $\varepsilon \in (0,1]$ and $\zeta _{0}=(u_{0},u_{1},\delta_0,\delta_1) \in \mathcal{H}_\ep$, there exists a unique global weak solution $\chi = (w, w_{t}, \theta, \theta_t) \in C([0,\infty );\mathcal{H}_\ep)$ to problem (\ref{pde-w}) satisfying 
\begin{equation}  \label{bnd-2}
\theta_{t}\in L_{\mathrm{loc}}^{2}([0,\infty )\times \Gamma ).
\end{equation}
Moreover, for all $\zeta_{0}\in \mathcal{H}_\ep$ with $\left\Vert \zeta_{0}\right\Vert _{\mathcal{H}_\ep}\leq R$ for
all $\varepsilon \in (0,1]$, there holds for all $t\geq 0$, 
\begin{equation}  \label{uniform-bound-w}
\Vert K_\ep(t)\zeta _{0}\Vert _{\mathcal{H}_\ep} \le Q(R).
\end{equation}
\end{lemma}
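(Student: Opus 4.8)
The plan is to treat (\ref{pde-w}) as an abstract Cauchy problem in $\mathcal{H}_\ep$ driven by the already-constructed global solution $u$ of Problem (A), and then to extract a dissipative energy estimate uniform in both $t$ and $\ep$. First I would establish local well-posedness. Writing the system as $\chi'=\mathrm{A}_\ep\chi+\widetilde{\mathcal{G}}_\ep(\chi)+\mathcal{U}(t)$, with $\widetilde{\mathcal{G}}_\ep(\chi):=(0,-\psi(w),0,0)$ and the known forcing $\mathcal{U}(t):=(0,\beta u(t),0,0)$, I note that $\mathrm{A}_\ep$ generates a $C^0$-semigroup of contractions on $\mathcal{H}_\ep$ (established above), that $\psi=f+\beta\,\mathrm{id}$ inherits the growth of $f$ so $w\mapsto\psi(w)$ is locally Lipschitz from $H^1(\Omega)$ to $L^2(\Omega)$, and that $\mathcal{U}\in C([0,\infty);\mathcal{H}_\ep)$ since $u\in C([0,\infty);H^1(\Omega))$ by Theorem \ref{t:ws-a}. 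The usual variation-of-constants/contraction argument (cf. \cite{Zheng04}) then gives a unique mild solution $\chi\in C([0,T^\ast);\mathcal{H}_\ep)$ with $\chi(0)=\mathbf{0}$; in particular $\theta_t$, the fourth component, lies in $C([0,T^\ast);L^2(\Gamma))\subset L^2_{\mathrm{loc}}$, which yields (\ref{bnd-2}).

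Next comes the a priori estimate. Multiplying (\ref{pde-w})$_1$ by $2w_t$ in $L^2(\Omega)$ and (\ref{pde-w})$_2$ by $2\theta_t$ in $L^2(\Gamma)$ and summing, the two boundary couplings generated by $\partial_{\bf{n}}w=\theta_t$ and by the right-hand side $-w_t$ cancel, producing the energy identity
\[
\frac{\diff}{\diff t}\Big(\|\chi\|^2_{\mathcal{H}_\ep}+2\int_\Omega\Psi(w)\,\diff x\Big)+2\|w_t\|^2+2\ep\|\theta_t\|^2_{L^2(\Gamma)}=2\beta\langle u,w_t\rangle.
\]
Since $\psi'\geq0$ by (\ref{f-reg-ass-3}) and (\ref{beta}), $\Psi$ is convex and bounded below by a linear (hence absorbable) term, so the left-hand functional is equivalent to $\|\chi\|^2_{\mathcal{H}_\ep}$. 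The forcing is controlled by $2\beta|\langle u,w_t\rangle|\leq\|w_t\|^2+\beta^2\|u\|^2\leq\|w_t\|^2+\beta^2 Q(R)^2$ using the uniform absorbing bound (\ref{acoustic-bound}) on $u$, so the positive dissipation survives against a constant right-hand side. To turn this into a bound uniform in $t$, I would follow the proof of Lemma \ref{t:a-abs-set}: augment the energy by multiplier terms obtained from testing (\ref{pde-w})$_1$ with $\eta w$ and (\ref{pde-w})$_2$ with a compatibly $\ep$-weighted multiple of $\theta$, so as to recover dissipation in $\|w\|_1^2$ and $\ep\|\theta\|^2_{L^2(\Gamma)}$. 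For $\eta>0$ small this yields a Lyapunov functional $\Lambda$ equivalent to $\|\chi\|^2_{\mathcal{H}_\ep}$ obeying a differential inequality $\Lambda'+c\Lambda\leq C(R)$; since $\chi(0)=\mathbf{0}$ gives $\Lambda(0)=0$, a Gr\"onwall argument delivers $\|K_\ep(t)\zeta_0\|_{\mathcal{H}_\ep}^2\leq\Lambda(t)\leq C(R)/c=Q(R)$ for all $t\geq0$. This uniform-in-$t$ bound also precludes blow-up, so $T^\ast=\infty$.

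The main obstacle is making the dissipative estimate uniform in $\ep$: the boundary damping $\ep\|\theta_t\|^2_{L^2(\Gamma)}$ and restoring term $\ep\|\theta\|^2_{L^2(\Gamma)}$ both degenerate as $\ep\to0$, whereas $\theta_t$ enters $\|\cdot\|_{\mathcal{H}_\ep}$ with weight one. This is exactly the place where $\ep$ enters crucially, and I expect to resolve it as in Remark \ref{r:time-1}: the free parameters in $\Lambda$ (in the spirit of the choice $\iota=m_1\ep$ used there) must be tuned proportionally to $\ep$ so that the $\ep$-dependence cancels and the final constant $Q(R)$ is genuinely $\ep$-independent. A secondary technical point is that the multiplier computations are only formal at the $\mathcal{H}_\ep$-energy level, since they involve boundary traces of $w_t$; I would justify them first for strong solutions supplied by Theorem \ref{t:ws-a} and then pass to weak solutions by density together with the continuous-dependence estimate (\ref{acoustic-continuous-dependence}), as in \cite{Frigeri10}.
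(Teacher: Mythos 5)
Your overall architecture coincides with what the paper intends: the paper never writes out a proof of this lemma (it defers to \cite{Frigeri10}, where $\ep=1$, ``with suitable modifications,'' and hints that the bound rests on (\ref{acoustic-bound})), and your plan --- mild/local solutions for the $\mathrm{A}_\ep$-system with the locally Lipschitz monotone $\psi$ and continuous forcing $\beta u$, the energy identity from the multipliers $2w_t$ and $2\theta_t$ with cancellation of the boundary couplings, absorption of $2\beta\langle u,w_t\rangle$ via (\ref{acoustic-bound}), a perturbed energy, Gr\"onwall from $\chi(0)=\mathbf{0}$, and justification of the formal multipliers on regular solutions --- is exactly the standard route; your derivation of (\ref{bnd-2}) is also fine (a very minor quibble: the functional $\|\chi\|^2_{\mathcal{H}_\ep}+2\int_\Omega\Psi(w)\,\mathrm{d}x$ is only bounded \emph{below} by $c\|\chi\|^2_{\mathcal{H}_\ep}-C$, not equivalent to the norm, but that is all the argument needs since $\Lambda(0)=0$).

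The genuine gap is precisely at the point you flag and then resolve by an unsubstantiated analogy: the $\ep$-independence of $Q(R)$ in (\ref{uniform-bound-w}). Your multiplier scheme cannot yield $\Lambda'+c\Lambda\le C(R)$ with $c$ independent of $\ep$. The only damping available for $\|\theta_t\|^2_{L^2(\Gamma)}$ --- which enters $\|\cdot\|_{\mathcal{H}_\ep}$ with weight one --- is $2\ep\|\theta_t\|^2_{L^2(\Gamma)}$; the interior multiplier $\eta w$ generates the boundary term $-2\eta\langle\theta_t,w\rangle_{L^2(\Gamma)}$, whose absorption forces $\eta\lesssim\ep$; and the $\theta$-multiplier $c\theta$ produces $-c\|\theta_t\|^2_{L^2(\Gamma)}$ and the cross term $c\langle\theta_t,\theta\rangle_{L^2(\Gamma)}$, both of which force $c\lesssim\ep$ (the energy carries only $\ep\|\theta\|^2_{L^2(\Gamma)}$). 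The best inequality of the proposed type is therefore $\Lambda'+c\,\ep\,\Lambda\le C(R)$ with $C(R)\sim\beta^2 Q(R)^2$ of order one, and Gr\"onwall from $\Lambda(0)=0$ gives only $\sup_{t\ge0}\Lambda(t)\le C(R)/(c\ep)\sim\ep^{-1}$ --- consistent with the $\ep$-dependent rates the paper itself records nearby ($\omega_{5\ep}\sim\ep$ in Lemma \ref{t:uniform-decay}, $R_{2\ep}\sim\ep^{-1}$ in Lemma \ref{compact-a}), but not the claimed uniform $Q(R)$. The appeal to Remark \ref{r:time-1} does not transfer: there the tuning $\iota=m_1\ep$ buys an $\ep$-uniform \emph{radius} at the cost of an entry time $t_{1\ep}\sim\ep^{-1}$, an option unavailable here since (\ref{uniform-bound-w}) must hold for \emph{all} $t\ge0$; on the transient the raw energy inequality gives only linear growth $\Lambda(t)\le C(R)t$, which reaches size $\ep^{-1}$ before the $O(\ep)$ dissipation saturates it. So as written your argument proves (\ref{uniform-bound-w}) with $Q_\ep(R)\sim\ep^{-1/2}$ in place of $Q(R)$; closing the lemma as stated would require a mechanism beyond Lyapunov estimates (e.g., exploiting oscillatory cancellation in the boundary equation $\theta_{tt}+\ep\theta_t+\ep\theta=-w_t$), and you should be aware the paper supplies no such argument either. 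A secondary point: your density step needs care, since strong solutions of (\ref{pde-w}) require the forcing $\beta u$ to be more than merely continuous in time; it is cleaner to justify the energy identity for weak solutions directly, as in Theorem \ref{t:ws-a} via Ball's framework.
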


\begin{lemma}  \label{t:Gronwall-bound} 
For each $\ep\in(0,1]$ and for all $\eta >0$, there is a function $Q_{\eta}(\cdot) \sim \eta^{-1}$, such that for every $0\leq s\leq t$, $\zeta_{0} = (u_{0},u_{1},\delta_0,\delta_1) \in \mathcal{B}_{\varepsilon }$, and $\ep\in(0,1]$,
\begin{equation}  \label{Gronwall-bound-0}
\int_{s}^{t} \left( \Vert u_{t}(\tau )\Vert ^{2} + \Vert w_{t}(\tau )\Vert
^{2} + \ep\|\theta(\tau)\|^2_{L^2(\Gamma)} \right) \diff\tau \le \frac{\eta }{2}(t - s) + Q_{\eta}(R),
\end{equation}
where $R>0$ is such that $\left\Vert \zeta _{0}\right\Vert _{\mathcal{H}_{\varepsilon }}\leq R,$ for all $\varepsilon\in(0,1]$.
\end{lemma}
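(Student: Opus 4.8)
The plan is to bound the three contributions separately, using in each case the appropriate energy identity together with the device of integrating the forcing in time by parts and then applying a Cauchy--Schwarz/Young argument that trades a sublinear $(t-s)^{1/2}$ growth for a small linear term $\tfrac{\eta}{2}(t-s)$ plus an $\eta$-dependent constant. Throughout I would use that $\zeta_0\in\mathcal{B}_\ep$ gives, via the uniform bound (\ref{acoustic-bound}) and Lemma \ref{t:uniform-bound-w}, the a priori controls $\|\zeta(\tau)\|_{\mathcal{H}_\ep}\le Q(R)$ and $\|\chi(\tau)\|_{\mathcal{H}_\ep}=\|K_\ep(\tau)\zeta_0\|_{\mathcal{H}_\ep}\le Q(R)$ for all $\tau\ge0$, where $\chi=(w,w_t,\theta,\theta_t)$ solves (\ref{pde-w}). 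Apportioning $\eta$ among the three terms, it suffices to prove the stated form for each with $\eta$ replaced by $\eta/3$.

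First I would dispatch the $\|u_t\|^2$ term. Integrating the energy equation (\ref{acoustic-energy-3}) (with $g\equiv0$) over $[s,t]$ and invoking the lower bound (\ref{from-f-assumption-2}) together with (\ref{acoustic-bound}) gives $\int_s^t\|u_\tau\|^2\,\diff\tau\le Q(R)$, a constant with no growth in $t-s$; this already fits under $\tfrac{\eta}{2}(t-s)+Q_\eta(R)$.

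Next, for $\|w_t\|^2$ I would derive the energy identity for the subsystem (\ref{pde-w}): testing (\ref{pde-w})$_1$ with $2w_t$ in $L^2(\Omega)$ and (\ref{pde-w})$_2$ with $2\theta_t$ in $L^2(\Gamma)$, the boundary coupling terms cancel and one obtains
\[
\frac{\diff}{\diff t}\mathcal{E}_w+2\|w_t\|^2+2\ep\|\theta_t\|^2_{L^2(\Gamma)}=2\beta\langle u,w_t\rangle,
\]
where $\mathcal{E}_w:=\|w_t\|^2+\|w\|^2_1+\|\theta_t\|^2_{L^2(\Gamma)}+\ep\|\theta\|^2_{L^2(\Gamma)}+2\int_\Omega\Psi(w)\,\diff x$ is bounded by $Q(R)$ by Lemma \ref{t:uniform-bound-w} and the growth of $\Psi$. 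Integrating and writing the forcing as $2\beta\int_s^t\langle u,w_\tau\rangle=2\beta[\langle u,w\rangle]_s^t-2\beta\int_s^t\langle u_\tau,w\rangle$, I would estimate
\[
\left|\int_s^t\langle u_\tau,w\rangle\,\diff\tau\right|\le\sup_{\tau}\|w(\tau)\|\,(t-s)^{1/2}\Big(\int_s^t\|u_\tau\|^2\,\diff\tau\Big)^{1/2}\le Q(R)\,(t-s)^{1/2},
\]
using the $u_t$-bound just obtained. Hence $\int_s^t\|w_\tau\|^2\le Q(R)\big(1+(t-s)^{1/2}\big)$, and the elementary inequality $(t-s)^{1/2}\le\tfrac{\eta'}{2}(t-s)+\tfrac{1}{2\eta'}$ with $\eta'$ chosen proportional to $\eta/Q(R)$ yields $\int_s^t\|w_\tau\|^2\le\tfrac{\eta}{2}(t-s)+Q_\eta(R)$ with $Q_\eta\sim\eta^{-1}$, the constant here being independent of $\ep$.

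The remaining term $\ep\int_s^t\|\theta\|^2_{L^2(\Gamma)}$ is the crux, and I expect it to be the main obstacle. Since $\ep\|\theta\|^2$ is a stored rather than dissipated quantity, I would test the boundary ODE (\ref{pde-w})$_2$ with $\theta$ itself, obtaining
\[
\frac{\diff}{\diff t}\Big(\langle\theta_t,\theta\rangle_{L^2(\Gamma)}+\tfrac{\ep}{2}\|\theta\|^2_{L^2(\Gamma)}\Big)+\ep\|\theta\|^2_{L^2(\Gamma)}=\|\theta_t\|^2_{L^2(\Gamma)}-\langle w_t,\theta\rangle_{L^2(\Gamma)},
\]
and integrating to isolate $\ep\int_s^t\|\theta\|^2$. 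The difficulty is that the dissipation furnished by the identity of the previous step is the $\ep$-weighted quantity $\ep\int_s^t\|\theta_t\|^2$ (itself bounded by $Q(R)(1+(t-s)^{1/2})$ by the same Cauchy--Schwarz/Young device), whereas the right-hand side above carries the \emph{unweighted} $\int_s^t\|\theta_t\|^2$; moreover the endpoint term $\langle\theta_t,\theta\rangle_{L^2(\Gamma)}$ is controlled only through $\ep\|\theta\|^2\le Q(R)$, i.e. by a negative power of $\ep$. The forcing $\int_s^t\langle w_t,\theta\rangle$ can be split by Young's inequality so as to absorb a factor $\tfrac12\ep\int_s^t\|\theta\|^2$ onto the left, but the surviving contributions retain inverse powers of $\ep$. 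Tracking these carefully, one again reaches the form $\tfrac{\eta}{2}(t-s)+Q_\eta(R)$ with $Q_\eta\sim\eta^{-1}$ in its $\eta$-dependence; keeping (or honestly recording) the $\ep$-dependence of this constant is the delicate point, and is precisely the non-uniformity flagged for Problem (A) (compare $Q_\ep\sim\ep^{-1}$ in Theorem \ref{optimal-a}). This $\ep$-weighted boundary estimate is where the argument departs from the Robin case and is the reason a robustness result for the exponential attractors remains out of reach.
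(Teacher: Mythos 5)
Your proposal follows essentially the same route as the paper's (unwritten) proof: the paper states Lemma \ref{t:Gronwall-bound} without argument, deferring to the $\ep=1$ case in \cite{Frigeri10}, and that argument is precisely your scheme --- integrate the energy identity for the full problem and for (\ref{pde-w}) over $[s,t]$, handle the forcing $2\beta\langle u,w_t\rangle$ by integration by parts in time and Cauchy--Schwarz against the dissipation bound $\int_s^t\|u_\tau\|^2\,\diff\tau\le Q(R)$, convert the resulting $(t-s)^{1/2}$ growth into $\tfrac{\eta}{2}(t-s)+Q_\eta(R)$ via Young, and recover $\ep\int_s^t\|\theta\|^2_{L^2(\Gamma)}\,\diff\tau$ by testing the boundary equation with $\theta$. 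Your first two estimates are correct and uniform in $\ep$, and your energy identity for (\ref{pde-w}) (with the boundary coupling cancelling) is right. Your diagnosis of the third term is also accurate and is the substantive point of comparison: the $\theta$-multiplier identity carries the \emph{unweighted} $\int_s^t\|\theta_\tau\|^2_{L^2(\Gamma)}\,\diff\tau$ while the dissipation controls only $\ep\int_s^t\|\theta_\tau\|^2_{L^2(\Gamma)}\,\diff\tau$, and the endpoint term $\langle\theta_t,\theta\rangle_{L^2(\Gamma)}$ costs a factor $\ep^{-1/2}$ through $\ep\|\theta\|^2_{L^2(\Gamma)}\le Q(R)$; tracking this, the achievable constant is of the form $Q_{\eta,\ep}\sim\eta^{-1}\ep^{-2}$ rather than an $\ep$-independent $Q_\eta$ as the lemma's notation nominally suggests. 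At $\ep=1$ (Frigeri's setting) this distinction is invisible, which is presumably why the statement does not flag it, and the $\ep$-dependence you record is consistent with the degenerate constants the paper itself reports downstream ($Q_\ep\sim\ep^{-1}$ and $\omega_{5\ep}\sim\ep$ in Lemma \ref{t:uniform-decay}, $R_{2\ep}\sim\ep^{-1}$ in Lemma \ref{compact-a}), so your attempt is, if anything, more precise than the statement it proves.
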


The next result shows that the operators $Z_\ep$ are uniformly decaying to zero in $\mathcal{H}_\ep$, for each $\ep\in(0,1]$.

\begin{lemma}  \label{t:uniform-decay}  
For each $\varepsilon \in (0,1]$ and $\zeta_{0}=(u_{0},u_{1},\delta_0,\delta_1) \in \mathcal{H}_{\varepsilon }$, there exists a unique global weak solution $\xi=(v,v_t,\gamma,\gamma_t)\in C([0,\infty );\mathcal{H}_{\varepsilon })$ to problem (\ref{pde-v}) satisfying 
\begin{equation}  \label{bounded-boundary-3}
\gamma_{t}\in L_{\mathrm{loc}}^{2}([0,\infty )\times \Gamma ).
\end{equation}
Moreover, for all $\zeta_0\in\mathcal{D}_\varepsilon$ with $\left\Vert\zeta_{0}\right\Vert _{\mathcal{H}_{\varepsilon }}\le R$ for all $\varepsilon \in (0,1]$, there is a constant $\omega_{5 \ep} > 0$, depending on $\varepsilon $, as $\omega_{5\ep}\sim\ep$, and there is a positive monotonically increasing function $Q_\ep(\cdot)\sim\ep^{-1}$, such that, for all $t\geq 0$, 
\begin{equation}  \label{uniform-decay}
\Vert Z_{\varepsilon }(t)\zeta_{0}\Vert _{\mathcal{H}_{\varepsilon }} \le
Q_\ep(R)e^{-\omega_{5\ep} t}.
\end{equation}
\end{lemma}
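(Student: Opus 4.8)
The plan is to read \eqref{pde-v} as a damped wave system for the single unknown $\xi=(v,v_t,\gamma,\gamma_t)$ in which the functions $w$ and $u=v+w$ furnished by Lemma~\ref{t:uniform-bound-w} play the role of \emph{given} data, and then to build an $\ep$-dependent Lyapunov functional whose dissipation forces exponential decay at a rate proportional to $\ep$. Throughout I work under \eqref{f-assumption-2}, \eqref{g-reg-ass-1}, \eqref{f-reg-ass-2}, \eqref{f-reg-ass-3}, which are exactly the hypotheses guaranteeing strong solutions in Theorem~\ref{t:ws-a}; taking $\zeta_0\in\mathcal{D}_\ep$ therefore makes $\xi$ a strong solution and renders all the multiplications below rigorous.

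First I would establish well-posedness. Writing $\psi(u)-\psi(w)=\psi(v+w)-\psi(w)$ and recalling that $w$ (hence $u$) is a global function bounded in $\mathcal{H}_\ep$ by $Q(R)$, the map $v\mapsto\psi(v+w(t))-\psi(w(t))$ is locally Lipschitz from $H^1(\Omega)$ into $L^2(\Omega)$, uniformly on bounded time intervals, exactly as for $\mathcal{G}_\ep$ in the proof of Theorem~\ref{t:ws-a}. Hence \eqref{pde-v} fits the abstract framework $\diff\xi/\diff t=\mathrm{A}_\ep\xi+\widetilde{\mathcal{G}}(t,\xi)$, and the semigroup generation of $\mathrm{A}_\ep$ together with the a priori bound below yields a unique global $\xi\in C([0,\infty);\mathcal{H}_\ep)$; the regularity \eqref{bounded-boundary-3} is read off from the finiteness of the boundary dissipation $\ep\int_0^t\|\gamma_t\|^2_{L^2(\Gamma)}\,\diff\tau$ built into the energy identity.

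Next comes the decay estimate, which is the core. Testing \eqref{pde-v}$_1$ with $2v_t+2\eta v$ in $L^2(\Omega)$ and \eqref{pde-v}$_2$ with $2\gamma_t+2\eta\gamma$ in $L^2(\Gamma)$ and summing, the leading boundary coupling terms cancel exactly as in the energy identity \eqref{acoustic-energy-3} (here $g\equiv0$ by \eqref{g-reg-ass-1}). The multiplier $2\eta v$ produces the coercive interior contribution $2\eta(\|\nabla v\|^2+\|v\|^2)$ and $2\eta\gamma$ produces $2\eta\ep\|\gamma\|^2_{L^2(\Gamma)}$, while the residual sign-indefinite boundary cross terms are absorbed by Young's inequality into $\ep\|\gamma_t\|^2_{L^2(\Gamma)}$ and $\ep\|\gamma\|^2_{L^2(\Gamma)}$ at the cost of factors $\ep^{-1}$. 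The crucial structural point is monotonicity: by \eqref{f-reg-ass-3} the function $\psi$ in \eqref{beta} is nondecreasing, so $(\psi(u)-\psi(w))v=(\psi(v+w)-\psi(w))v\ge0$ pointwise and the $2\eta$-weighted nonlinear term is nonnegative, hence discardable. The remaining sign-indefinite term $2\langle\psi(u)-\psi(w),v_t\rangle$ coming from $2v_t$ I would cancel by adjoining the nonnegative potential $2\int_\Omega[\Psi(u)-\Psi(w)-\psi(w)v]\,\diff x$ to the functional: its time derivative reproduces $2\langle\psi(u)-\psi(w),v_t\rangle$ plus a remainder $2\int_\Omega(\psi(u)-\psi(w)-\psi'(w)v)w_t\,\diff x$ controlled via \eqref{f-reg-ass-2}, the uniform bounds on $u,w$, and the averaged integral bound of Lemma~\ref{t:Gronwall-bound}. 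Collecting terms, set
\[
\Lambda_\ep(t):=\|\xi\|^2_{\mathcal{H}_\ep}+2\eta\langle v,v_t\rangle+2\eta\ep\langle\gamma,\gamma_t\rangle_{L^2(\Gamma)}+2\int_\Omega[\Psi(u)-\Psi(w)-\psi(w)v]\,\diff x,
\]
which for $\eta$ small is equivalent to $\|\xi\|^2_{\mathcal{H}_\ep}$.

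Finally I would derive a differential inequality
\[
\frac{\diff}{\diff t}\Lambda_\ep+\omega_{5\ep}\Lambda_\ep\le h(t),
\]
whose dissipative budget is $2\|v_t\|^2+2\ep\|\gamma_t\|^2_{L^2(\Gamma)}+2\eta(\|\nabla v\|^2+\|v\|^2)+2\eta\ep\|\gamma\|^2_{L^2(\Gamma)}$. The term $-2\eta\|\gamma_t\|^2_{L^2(\Gamma)}$ generated by the boundary multiplier must be dominated by the boundary dissipation $2\ep\|\gamma_t\|^2_{L^2(\Gamma)}$, which forces $\eta\lesssim\ep$; since the interior coercivity thereby gained is only of order $\eta\sim\ep$, the best rate for which the \emph{full} functional (interior and boundary together) is controlled by its dissipation is $\omega_{5\ep}\sim\ep$, exactly the claimed scaling. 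The prefactor $Q_\ep(R)\sim\ep^{-1}$ arises from the $\ep$-weight in $\|\cdot\|_{\mathcal{H}_\ep}$ and from taking the averaging parameter in Lemma~\ref{t:Gronwall-bound} of order $\ep$, so that $Q_\eta\sim\eta^{-1}\sim\ep^{-1}$. A standard Gronwall argument then yields \eqref{uniform-decay}.

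The hard part will be twofold. First, controlling the nonlinear remainder $2\int_\Omega(\psi(u)-\psi(w)-\psi'(w)v)w_t\,\diff x$ uniformly: this second-order-in-$v$ quantity is where the growth bound \eqref{f-reg-ass-2} and the averaged dissipation integral of Lemma~\ref{t:Gronwall-bound} must be combined so that $h(t)$ is either exponentially small or absorbable into $\omega_{5\ep}\Lambda_\ep$. Second, verifying honestly that no step degrades the rate below $O(\ep)$, i.e.\ that the interior dissipation is not silently coupled to the slow boundary oscillator in a way that would pin a smaller rate. This $\ep$-bookkeeping, rather than any single estimate, is the essential difficulty; it mirrors the $\ep=1$ computation in \cite{Frigeri10} carried through with all the $\ep$-weights made explicit.
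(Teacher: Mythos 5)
Your architecture is the intended one: the paper gives no self-contained proof of Lemma \ref{t:uniform-decay}, deferring to \cite{Frigeri10} ``with modifications to include the perturbation parameter,'' and what you describe --- a perturbed energy functional built from the $\mathcal{H}_\ep$-energy with multipliers $2v_t+2\eta v$ in $\Omega$ and boundary multipliers on $\Gamma$, the convex potential $2\int_\Omega[\Psi(u)-\Psi(w)-\psi(w)v]\,\diff x$ (nonnegative since $\psi'\ge 0$ by \eqref{f-reg-ass-3} and \eqref{beta}), monotonicity to discard $2\eta\langle\psi(u)-\psi(w),v\rangle\ge 0$, the Taylor remainder controlled through \eqref{f-reg-ass-2} and Lemma \ref{t:Gronwall-bound}, and a Gr\"onwall argument --- is precisely Frigeri's scheme with the $\ep$-weights tracked. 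Your identification of the constraint $\eta\lesssim\ep$ (from absorbing $-2\eta\|\gamma_t\|^2_{L^2(\Gamma)}$ into $2\ep\|\gamma_t\|^2_{L^2(\Gamma)}$) as the source of $\omega_{5\ep}\sim\ep$ is also correct.

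There is, however, one concrete step that fails as written. Testing \eqref{pde-v}$_2$ with $2\eta\gamma$ produces the cross term $-2\eta\langle v_t,\gamma\rangle_{L^2(\Gamma)}$, and you propose to absorb all ``residual sign-indefinite boundary cross terms \ldots by Young's inequality.'' Young's inequality applied to this particular term leaves $\eta\ep^{-1}\|v_t\|^2_{L^2(\Gamma)}$, and the \emph{trace} norm of $v_t$ is controlled neither by the energy $\|\xi\|_{\mathcal{H}_\ep}$ (which sees only $\|v_t\|_{L^2(\Omega)}$) nor by the interior dissipation $2\|v_t\|^2$; restricting to strong solutions makes the term well-defined but not quantitatively estimable at this energy level. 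The standard repair is to integrate by parts in time, writing $-2\eta\langle v_t,\gamma\rangle_{L^2(\Gamma)}=-2\eta\frac{\diff}{\diff t}\langle v,\gamma\rangle_{L^2(\Gamma)}+2\eta\langle v,\gamma_t\rangle_{L^2(\Gamma)}$, adjoining $2\eta\langle v,\gamma\rangle_{L^2(\Gamma)}$ to $\Lambda_\ep$ (harmless for norm equivalence when $\eta=c\ep$ with $c$ small, by the trace theorem), after which Young's inequality does work since only the trace of $v\in H^1(\Omega)$ appears. Two further bookkeeping points: your displayed functional carries $2\eta\ep\langle\gamma,\gamma_t\rangle_{L^2(\Gamma)}$, but the corresponding multiplier $2\eta\ep\gamma$ yields boundary coercivity $2\eta\ep^2\|\gamma\|^2_{L^2(\Gamma)}$ against the norm weight $\ep\|\gamma\|^2_{L^2(\Gamma)}$, pinning the rate at $\ep^2$; to realize $\omega_{5\ep}\sim\ep$ you must use the unweighted multiplier $2\eta\gamma$, as your prose in fact says. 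And since the remainder $2\int_\Omega(\psi(u)-\psi(w)-\psi'(w)v)w_t\,\diff x$ is quadratic in $v$, it enters Gr\"onwall \emph{multiplicatively}, $\frac{\diff}{\diff t}\Lambda_\ep+\omega_{5\ep}\Lambda_\ep\le C(R)\|w_t\|\,\Lambda_\ep$, so the constant it produces is of the form $e^{CQ_\eta(R)}$ rather than $Q_\eta(R)$ itself; your one-line derivation of $Q_\ep(R)\sim\ep^{-1}$ from ``$Q_\eta\sim\eta^{-1}$ with $\eta\sim\ep$'' skips this, and reconciling it with the stated scaling is exactly the $\ep$-bookkeeping you rightly flag as the essential difficulty.
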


The following lemma establishes the precompactness of the operators $K_{\varepsilon }$, for each $\ep\in(0,1].$

\begin{lemma}  \label{compact-a}
For all $\ep\in(0,1]$, and for each $R>0$ and $\zeta\in\mathcal{D}_\ep$ such that $\|\zeta\|_{\mathcal{D}_\ep}\le R$ for all $\ep\in(0,1]$, there exist constants $\omega_{6\ep},R_{2\ep}>0$, both depending on $\ep$, with $\omega_{6\ep}\sim\ep$ and $R_{2\ep}\sim\ep^{-1}$, in which, for all $t\ge0$, there holds
\begin{equation}  \label{a-exp-attr-1}
\|K_\ep(t)\zeta_0\|^2_{\mathcal{D}_\ep} \le Q(R)e^{-\omega_{6\ep}t} + R_{2\ep}.
\end{equation}
\end{lemma}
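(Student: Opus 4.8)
The plan is to follow the $H^2$-elliptic regularity strategy of \cite{Pata&Zelik06} (as already carried out for the $\ep=1$ case in \cite{Frigeri10}), splitting the target quantity
\[
\|K_\ep(t)\zeta_0\|^2_{\mathcal{D}_\ep} = \|w\|^2_2 + \|w_t\|^2_1 + \ep\|\theta\|^2_{H^{1/2}(\Gamma)} + \|\theta_t\|^2_{H^{1/2}(\Gamma)}
\]
into a \emph{time-derivative} contribution, estimated by differentiating (\ref{pde-w}) in $t$, and a \emph{spatial} contribution, recovered from the first by elliptic regularity. Here $(w,w_t,\theta,\theta_t)=K_\ep(t)\zeta_0$ solves (\ref{pde-w}); recall $\chi(0)=\mathbf{0}$, so all four components vanish at $t=0$ while $w_{tt}(0)$ is determined by $u_0$ and hence controlled by $R$.

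First I would differentiate (\ref{pde-w}) in time. With $\bar w:=w_t$ and $\bar\theta:=\theta_t$, the pair $(\bar w,\bar\theta)$ solves the linearised acoustic system
\begin{align*}
\bar w_{tt} + \bar w_t - \Delta\bar w + \bar w + \psi'(w)\bar w &= \beta u_t & \text{in } (0,\infty)\times\Omega, \\
\bar\theta_{tt} + \ep[\bar\theta_t + \bar\theta] &= -\bar w_t & \text{on } (0,\infty)\times\Gamma,
\end{align*}
together with $\bar\theta_t=\partial_{\bf{n}}\bar w$ on $\Gamma$. Testing the interior equation with $2(\bar w_t+\eta\bar w)$ in $L^2(\Omega)$ and the boundary equation with $2(\bar\theta_t+\eta\bar\theta)$ in $L^2(\Gamma)$ and summing, the acoustic coupling terms cancel exactly as in the derivation of (\ref{acoustic-energy-3}). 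The monotonicity $\psi'\ge 0$ (from (\ref{f-reg-ass-3}) with $\beta\ge\ell_2$) supplies a favourable sign, while the commutator produced by $\psi''(w)$ is absorbed using the growth bound (\ref{f-reg-ass-2}), the embedding $H^1(\Omega)\hookrightarrow L^6(\Omega)$, and the uniform bound (\ref{uniform-bound-w}). For small $\eta>0$ this yields a perturbed energy $\mathcal{E}_1(t)\sim\|\partial_t K_\ep(t)\zeta_0\|^2_{\mathcal{H}_\ep}$ obeying a differential inequality $\tfrac{\diff}{\diff t}\mathcal{E}_1 + \omega_{6\ep}\mathcal{E}_1 \le C\|u_t\|^2 + C$. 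The source $\beta u_t$ is controlled by the integral estimate of Lemma \ref{t:Gronwall-bound}, and a Gronwall argument (cf. Proposition \ref{t:diff-ineq-1}) then gives $\|\partial_t K_\ep(t)\zeta_0\|^2_{\mathcal{H}_\ep}\le Q(R)e^{-\omega_{6\ep}t}+R_{2\ep}$; in particular $w_t\in H^1(\Omega)$ and $w_{tt}\in L^2(\Omega)$ are bounded in this dissipative sense.

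It remains to convert this into the spatial $\mathcal{D}_\ep$ bound without circularity. Since $w_t\in H^1(\Omega)$, its trace lies in $H^{1/2}(\Gamma)$; viewing the \emph{original} boundary equation $\theta_{tt}+\ep\theta_t+\ep\theta=-w_t$ as a second-order ODE in the Hilbert space $H^{1/2}(\Gamma)$ with zero initial data and forcing $-w_t|_\Gamma\in H^{1/2}(\Gamma)$, a Duhamel/energy estimate propagates this regularity to $\theta,\theta_t\in H^{1/2}(\Gamma)$ with the same $e^{-\omega_{6\ep}t}+R_{2\ep}$ profile. Thus $\partial_{\bf{n}}w=\theta_t\in H^{1/2}(\Gamma)$ is now available as genuine Neumann data, and applying $H^2$-elliptic regularity to $-\Delta w + w = \beta u - w_t - w_{tt} - \psi(w)$ — whose right-hand side lies in $L^2(\Omega)$ by (\ref{uniform-bound-w}), the just-obtained bound on $w_{tt}$, and the cubic growth of $\psi$ via $H^1\hookrightarrow L^6$ — bounds $\|w\|_2$ and closes the estimate, delivering (\ref{a-exp-attr-1}).

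The main obstacle is the $\ep$-bookkeeping rather than any single estimate. Because the only dissipation on $\Gamma$ is the term $\ep\theta_t$, the coercivity of the boundary energy degenerates like $\ep$ as $\ep\to 0$, forcing the decay rate $\omega_{6\ep}\sim\ep$; and since the additive constant in the differential inequality is divided by this rate, the residual $R_{2\ep}\sim\ep^{-1}$ (matching the behaviour recorded in Remark \ref{r:time-1} and Lemma \ref{t:uniform-decay}). One must also confirm that the bootstrap is genuinely non-circular: the $H^{1/2}(\Gamma)$ control of $\theta_t=\partial_{\bf{n}}w$ is obtained from the \emph{independent} input $w_t\in H^1(\Omega)$ through the boundary ODE, and only then fed into the elliptic estimate for $w\in H^2(\Omega)$.
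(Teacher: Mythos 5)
Your proposal is correct and follows essentially the same route as the paper, which proves this lemma by deferring to the $\ep=1$ argument of \cite{Frigeri10} (the $H^2$-elliptic regularity scheme of \cite{Pata&Zelik06}): differentiate (\ref{pde-w}) in time, run the damped energy estimate using $\psi'\ge 0$, recover $\theta,\theta_t\in H^{1/2}(\Gamma)$ from the boundary ODE with trace forcing $w_t|_\Gamma$, and close with elliptic regularity, the boundary damping $\ep\theta_t$ forcing $\omega_{6\ep}\sim\ep$ and $R_{2\ep}\sim\ep^{-1}$ exactly as you track. One cosmetic caveat: the cubic commutator term $\int_\Omega\psi''(w)\,w_t^3\,\diff x$ is not absorbed by the uniform bound (\ref{uniform-bound-w}) alone but, like the source $\beta u_t$, requires the time-averaged smallness of Lemma \ref{t:Gronwall-bound} in the generalized Gronwall step --- which you invoke anyway, so the argument goes through.
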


\subsection{Exponential attractors for Problem (A)}

We now turn to the existence of exponential attractors for each $\varepsilon\in(0,1]$ for Problem (A). 
By \cite[Theorem 5]{Frigeri10}, we already know that Problem (A) with $\ep=1$ admits an exponential attractor described by Theorem \ref{eaa} below.
Moreover, the result for the perturbed case follows from \cite{Frigeri10} after suitable modifications to include $\varepsilon\in(0,1]$ appearing in the boundary condition (\ref{acoustic-boundary}). 

\begin{theorem}  \label{eaa} 
Assume (\ref{f-assumption-1}), (\ref{f-assumption-2}), and (\ref{g-reg-ass-1}) hold.
For each $\varepsilon \in (0,1]$, the dynamical system $(S_\ep,\mathcal{H}_\ep)$ associated with Problem (A) admits an exponential attractor $\mathcal{M}_{\varepsilon }$ compact in $\mathcal{H}_{\varepsilon},$ and bounded in $\mathcal{D}_\ep$. 
Moreover, for each $\ep\in(0,1]$ fixed, there hold:

(i) For each $t\geq 0$, $S_{\varepsilon }(t)\mathcal{M}_{\varepsilon}\subseteq \mathcal{M}_{\varepsilon }$.

(ii) The fractal dimension of $\mathcal{M}_{\varepsilon }$ with respect to the metric $\mathcal{H}_{\varepsilon }$ is finite, namely,
\begin{equation*}
\dim_{\mathrm{F}}\left( \mathcal{M}_{\varepsilon },\mathcal{H}_{\varepsilon }\right) \leq C_{\varepsilon }<\infty,
\end{equation*}
for some positive constant $C$ depending on $\varepsilon$.

(iii) There exists a positive constant $\nu_{1\ep}>0$ and a nonnegative monotonically increasing function $Q_{\ep}$ both depending on $\ep$, and where $Q_\ep\sim\ep^{-1}$, such that, for all $t\geq 0$, 
\begin{equation*}
\dist_{\mathcal{H}_{\varepsilon }}(S_{\varepsilon }(t)B,\mathcal{M}_{\varepsilon })\leq Q_{\ep}(\Vert B\Vert _{\mathcal{H}_{\varepsilon }})e^{-\nu_{1\ep} t},
\end{equation*}
for every nonempty bounded subset $B$ of $\mathcal{H}_{\varepsilon }$.
\end{theorem}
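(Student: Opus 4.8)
The plan is to apply the abstract construction of exponential attractors of Efendiev--Miranville--Zelik type (cf.\ \cite{EMZ00,FGMZ04,GGMP05}), exactly as in the $\ep=1$ case of \cite{Frigeri10}, while carefully tracking the dependence of every constant on the perturbation parameter. I fix $\ep\in(0,1]$ and work with the semiflow $S_\ep$ on $\mathcal{H}_\ep$; the abstract theorem requires a compact, positively invariant, exponentially attracting ``more regular'' absorbing set together with a discrete smoothing property for differences of trajectories. The first ingredient comes directly from Theorem \ref{optimal-a}: the set $\mathcal{U}_\ep\subset\mathcal{D}_\ep$ is bounded in $\mathcal{D}_\ep$ and attracts every bounded subset of $\mathcal{H}_\ep$ at the rate in (\ref{tran-2}); since $\mathcal{D}_\ep$ embeds compactly into $\mathcal{H}_\ep$ (Rellich's theorem together with the compactness of the trace $H^{1/2}(\Gamma)\hookrightarrow L^2(\Gamma)$), a suitable closed enlargement of the forward orbit of $\mathcal{U}_\ep$ furnishes a set $\mathcal{B}^1_\ep$ that is compact in $\mathcal{H}_\ep$, positively invariant, and still exponentially attracting, playing the role of $\mathcal{B}^1_0$ in the Problem (R) analysis (cf.\ Theorem \ref{ear} and Remark \ref{rem_att}).

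The heart of the proof is the smoothing property on $\mathcal{B}^1_\ep$. Given two initial data in $\mathcal{B}^1_\ep$, I would write their difference $\bar\zeta=\zeta^1-\zeta^2$ and split it, in the spirit of the decomposition (\ref{pde-v})--(\ref{pde-w}), into a part $\bar\xi$ that decays exponentially in $\mathcal{H}_\ep$ (as in Lemma \ref{t:uniform-decay}) and a part $\bar\chi$ that remains bounded in the higher-regularity space $\mathcal{D}_\ep$ (as in Lemma \ref{compact-a}). Controlling the nonlinear differences $\psi(u^1)-\psi(u^2)$ requires the additional assumptions (\ref{f-reg-ass-2})--(\ref{f-reg-ass-3}) on $f$, and, since no fractional power of the Laplacian is available under the acoustic boundary condition, the $\mathcal{D}_\ep$-bound on $\bar\chi$ must be extracted through $H^2$-elliptic regularity applied to the elliptic problem solved at each time, exactly as in the optimal-regularity section. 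Fixing a time $t^\ast=t^\ast(\ep)$ large enough that the decaying contribution becomes a strict contraction, I obtain the decomposition $S_\ep(t^\ast)\zeta^1_0-S_\ep(t^\ast)\zeta^2_0=\bar\xi(t^\ast)+\bar\chi(t^\ast)$ with
\[
\|\bar\xi(t^\ast)\|_{\mathcal{H}_\ep}\le \tfrac12\|\zeta^1_0-\zeta^2_0\|_{\mathcal{H}_\ep},
\qquad
\|\bar\chi(t^\ast)\|_{\mathcal{D}_\ep}\le K_\ep\,\|\zeta^1_0-\zeta^2_0\|_{\mathcal{H}_\ep};
\]
since $\mathcal{D}_\ep$ is compactly embedded in $\mathcal{H}_\ep$, this is precisely the discrete smoothing property demanded by the abstract result.

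With these two ingredients the abstract theorem yields a discrete exponential attractor $\mathcal{M}^d_\ep$ for the map $S_\ep(t^\ast)$ on $\mathcal{B}^1_\ep$, of finite fractal dimension. I would then pass to the continuous object by setting $\mathcal{M}_\ep:=\bigcup_{t\in[0,t^\ast]}S_\ep(t)\mathcal{M}^d_\ep$ and invoking the joint Lipschitz/H\"older continuity of $(t,\zeta)\mapsto S_\ep(t)\zeta$ on $[0,t^\ast]\times\mathcal{B}^1_\ep$ (in the $\zeta$ variable this is (\ref{sf-a-lc}) from Corollary \ref{sf-a}, and in $t$ it follows from the energy estimates), which preserves finite fractal dimension and gives the positive invariance (i) and the finiteness of the dimension (ii). The exponential attraction (iii) first holds for $\mathcal{B}^1_\ep$ and is then upgraded to every bounded subset of $\mathcal{H}_\ep$ by the transitivity of exponential attraction (Lemma \ref{t:exp-attr}), combined with the attraction of $\mathcal{B}^1_\ep$ furnished by Theorem \ref{optimal-a}.

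The main obstacle is that \emph{none} of this is uniform in $\ep$. The regular absorbing radius behaves like $R_{1\ep}\sim\ep^{-1}$ (Remark \ref{r:time-1}), while the decay rates and bounding functions produced by Lemmas \ref{t:uniform-decay} and \ref{compact-a} satisfy $\omega_{5\ep},\omega_{6\ep}\sim\ep$ and $Q_\ep\sim\ep^{-1}$; consequently the contraction time $t^\ast$, the constant $K_\ep$, and hence the dimension bound $C_\ep$ and the attraction data $\nu_{1\ep}$, $Q_\ep\sim\ep^{-1}$, all degenerate as $\ep\to0$. Tracking these factors faithfully through the $H^2$-regularity estimates---rather than producing estimates independent of $\ep$---is the delicate point, and it is exactly what prevents any robustness or H\"older-continuity statement for the family $\{\mathcal{M}_\ep\}_{\ep\in(0,1]}$.
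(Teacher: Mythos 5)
Your proposal is correct and takes essentially the same route as the paper: it applies the abstract Efendiev--Miranville--Zelik-type result (Proposition \ref{abstract-a}) with (H1) supplied by the higher-order dissipative estimate (\ref{a-exp-attr-2}) yielding the regular absorbing set $\mathcal{B}^1_\ep$, (H2) by the contraction-plus-$\mathcal{D}_\ep$-smoothing decomposition of trajectory differences via $H^2$-elliptic regularity as in \cite{Frigeri10}, (H3) by the Lipschitz continuity (\ref{sf-a-lc}), the basin of attraction extended to all of $\mathcal{H}_\ep$ by transitivity of exponential attraction (Proposition \ref{t:exp-attr}), and the non-uniform $\ep$-dependence tracked exactly as in Remark \ref{r:time-1} and Lemmas \ref{t:uniform-decay} and \ref{compact-a}. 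Your observation that the smoothing step needs (\ref{f-reg-ass-2})--(\ref{f-reg-ass-3}) is consistent with the paper's actual machinery (Theorem \ref{optimal-a} and the definition of $\psi$ in (\ref{beta}) assume them), and in fact flags what appears to be a misprint in the hypotheses as stated in Theorem \ref{eaa}.
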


As with Problem (R) above, the proof of Theorem \ref{eaa} will follow from the application of an abstract proposition reported specifically for our current case below (see, e.g., \cite[Proposition 1]{EMZ00}, \cite{FGMZ04}, \cite{GGMP05}).

\begin{proposition}  \label{abstract-a}
Assume (\ref{f-assumption-1}), (\ref{f-assumption-2}), and (\ref{g-reg-ass-1}) hold.
Let $(S_{\varepsilon},\mathcal{H}_{\varepsilon}) $ be a dynamical system for each $\varepsilon >0$. 
Assume the following hypotheses hold:

\begin{enumerate}

\item[(H1)] There exists a bounded absorbing set $\mathcal{B}_{\varepsilon
}^{1}\subset \mathcal{D}_{\varepsilon }$ which is positively invariant for $S_{\varepsilon }(t).$ 
More precisely, there exists a time $t_{2\ep}>0,$ which \emph{depends} on $\varepsilon >0$, such that
\begin{equation*}
S_{\varepsilon }(t)\mathcal{B}_{\varepsilon }^{1}\subset \mathcal{B}_{\varepsilon }^{1}
\end{equation*}
for all $t\geq t_{2\ep}$ where $\mathcal{B}_{\varepsilon }^{1}$ is endowed with
the topology of $\mathcal{H}_{\varepsilon }.$

\item[(H2)] There is $t^{\ast }\geq t_{2\ep}$ such that the map $S_{\varepsilon
}(t^{\ast })$ admits the decomposition, for each $\varepsilon \in (0,1]$ and
for all $\zeta_{0},\xi_{0}\in \mathcal{B}_{\varepsilon }^{1}$, 
\begin{equation*}
S_{\varepsilon }(t^{\ast })\zeta_{0}-S_{\varepsilon }(t^{\ast })\xi_{0}=L_{\varepsilon }(\zeta_{0},\xi_{0})+R_{\varepsilon }(\zeta_{0},\xi_{0})
\end{equation*}
where, for some constants $\alpha ^{\ast }\in (0,\frac{1}{2})$ and $\Lambda
^{\ast }=\Lambda ^{\ast }(\Omega ,t^{\ast })\geq 0$ with $\Lambda ^{\ast }$ depending on $\varepsilon >0$, the following hold:
\begin{equation}\label{dd-l-a}
\Vert L_{\varepsilon }(\zeta_{0},\xi_{0})\Vert _{\mathcal{H}_{\varepsilon }}\leq \alpha ^{\ast }\Vert \zeta_{0}-\xi_{0}\Vert _{\mathcal{H}_{\varepsilon }}  
\end{equation}
and
\begin{equation}  \label{dd-k-a}
\Vert R_{\varepsilon }(\zeta_{0},\xi_{0})\Vert _{\mathcal{D}_{\varepsilon }}\leq \Lambda ^{\ast }\Vert \zeta_{0}-\xi_{0}\Vert _{\mathcal{H}_{\varepsilon }}.
\end{equation}

\item[(H3)] The map
\begin{equation*}
(t,U)\mapsto S_{\varepsilon }(t)\zeta:[t^{\ast },2t^{\ast }]\times \mathcal{B}_{\varepsilon }^{1}\rightarrow \mathcal{B}_{\varepsilon }^{1}
\end{equation*}
is Lipschitz continuous on $\mathcal{B}_{\varepsilon }^{1}$ in the topology
of $\mathcal{H}_{\varepsilon}$.
\end{enumerate}

Then, $(S_{\varepsilon },\mathcal{H}_{\varepsilon })$ possesses
an exponential attractor $\mathcal{M}_{\varepsilon }$ in $\mathcal{B}_{\varepsilon }^{1}.$
\end{proposition}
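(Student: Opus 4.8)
The plan is to follow the by-now standard abstract construction of exponential attractors via the smoothing (or squeezing) property, as developed in \cite{EMZ00,FGMZ04,GGMP05}. The strategy is to first build an exponential attractor for the \emph{discrete} semigroup generated by the single map $S:=S_\varepsilon(t^*)$ acting on the positively invariant absorbing set $\mathcal{B}^1_\varepsilon$, and then to lift this discrete attractor to the continuous-time semiflow using hypothesis (H3). Throughout I would fix $\varepsilon\in(0,1]$ (so that all constants below are permitted to depend on $\varepsilon$) and record the compact embedding $\mathcal{D}_\varepsilon\hookrightarrow\hookrightarrow\mathcal{H}_\varepsilon$, which holds because $\mathcal{D}_\varepsilon$ encodes one extra degree of Sobolev regularity in each component relative to $\mathcal{H}_\varepsilon$. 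By (H1) the map $S$ sends $\mathcal{B}^1_\varepsilon$ into itself for $t^*\ge t_{2\varepsilon}$, and since $\mathcal{B}^1_\varepsilon$ is bounded in $\mathcal{D}_\varepsilon$ it is precompact in $\mathcal{H}_\varepsilon$.

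The heart of the argument is a covering estimate derived from (H2). Given any ball $B_{\mathcal{H}_\varepsilon}(\zeta_0,r)\cap\mathcal{B}^1_\varepsilon$ with center $\zeta_0$, the decomposition $S\zeta_0-S\xi_0=L_\varepsilon(\zeta_0,\xi_0)+R_\varepsilon(\zeta_0,\xi_0)$ together with (\ref{dd-l-a}) shows the contraction part lies in the $\mathcal{H}_\varepsilon$-ball of radius $\alpha^*r$ about $S\zeta_0$, while (\ref{dd-k-a}) bounds the admissible remainders by $\Lambda^*r$ in $\mathcal{D}_\varepsilon$. By the compact embedding, the set of such remainders can be covered by a finite number $N=N(\Lambda^*/\alpha^*)$ of $\mathcal{H}_\varepsilon$-balls of radius $\alpha^*r$, with $N$ \emph{independent of} $r$ by scaling. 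Hence $S\bigl(B_{\mathcal{H}_\varepsilon}(\zeta_0,r)\cap\mathcal{B}^1_\varepsilon\bigr)$ is covered by $N$ balls of radius $2\alpha^*r$, and the assumption $\alpha^*<\tfrac12$ forces $2\alpha^*<1$, so the radius strictly contracts under each application of $S$.

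Iterating this covering estimate along the orbit of $\mathcal{B}^1_\varepsilon$ and collecting the resulting centers produces a compact, positively invariant set $\mathcal{M}^d_\varepsilon\subset\mathcal{B}^1_\varepsilon$ whose Kolmogorov entropy obeys $\mu_{\mathcal{H}_\varepsilon}(\mathcal{M}^d_\varepsilon,r)\le Cr^{-d}$ with $d=\ln N/(-\ln(2\alpha^*))$, giving finite fractal dimension, and which attracts $\mathcal{B}^1_\varepsilon$ at the discrete exponential rate $(2\alpha^*)^n$. I would then pass to continuous time by setting $\mathcal{M}_\varepsilon:=\bigcup_{t\in[t^*,2t^*]}S_\varepsilon(t)\mathcal{M}^d_\varepsilon$. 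Positive invariance (i) follows from the semigroup property and the $S$-invariance of $\mathcal{M}^d_\varepsilon$; the Lipschitz continuity in (H3) exhibits this union as a Lipschitz image of $[t^*,2t^*]\times\mathcal{M}^d_\varepsilon$, so its fractal dimension is bounded by $1+\dim_{\mathrm{F}}(\mathcal{M}^d_\varepsilon,\mathcal{H}_\varepsilon)<\infty$, yielding (ii); and combining the discrete exponential attraction, the uniform boundedness of the flow on $[t^*,2t^*]$ from (H3), the exponential absorption of any bounded set into $\mathcal{B}^1_\varepsilon$, and the transitivity of exponential attraction as in Remark \ref{rem_att} gives the attraction estimate (iii).

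The main obstacle I anticipate is the covering step itself: one must verify carefully that the $\mathcal{D}_\varepsilon$-bound on the smoothing remainder $R_\varepsilon$ converts, via the compact embedding, into a cover of $\mathcal{H}_\varepsilon$ whose cardinality $N$ is \emph{both} finite \emph{and} independent of the scale $r$, since it is precisely this radius-uniformity that simultaneously controls the entropy growth and forces the fractal dimension of $\mathcal{M}^d_\varepsilon$ to remain finite. Everything downstream is then a mechanical bookkeeping of the geometric series governing the radii and the attraction rates.
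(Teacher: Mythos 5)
Your proposal is correct and follows exactly the route the paper intends: the paper offers no proof of this proposition, deferring instead to the abstract results of \cite[Proposition 1]{EMZ00}, \cite{FGMZ04}, and \cite{GGMP05}, whose argument is precisely your construction (discrete attractor for $S_\varepsilon(t^*)$ via the smoothing/covering estimate from (H2) and the compact embedding $\mathcal{D}_\varepsilon\hookrightarrow\hookrightarrow\mathcal{H}_\varepsilon$, then the lift $\mathcal{M}_\varepsilon=\bigcup_{t\in[t^*,2t^*]}S_\varepsilon(t)\mathcal{M}^d_\varepsilon$ using (H3), with transitivity of exponential attraction as in Remark \ref{rem_att}). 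Your handling of the one delicate point --- the scale-invariance of the covering number $N$, obtained by homogeneity of the two norms --- is exactly how the cited sources resolve it.
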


\begin{remark}
As in the case for Problem (R), the basin of exponential attraction is indeed the entire phase space thanks to (\ref{a-exp-attr-1}) below.
This in turn implies the exponential attraction of subsets of $\mathcal{B}^1_\varepsilon$ and the transitivity of exponential attraction (see Proposition \ref{t:exp-attr}).
\end{remark}

\begin{corollary}
For each $\ep\in(0,1]$, the global attractors of Theorem \ref{t:acoustic-global} are bounded in $\mathcal{D}_\ep$.

In addition, there holds
\begin{equation*}
\dim_{\mathrm{F}}(\mathcal{A}_\varepsilon,\mathcal{H}_\varepsilon)\leq \dim_{\mathrm{F}}(\mathcal{M}_\varepsilon,\mathcal{H}_\varepsilon)\le C_\ep
\end{equation*}
for some constant $C>0$, depending on $\ep.$
As a consequence, the global attractors $\mathcal{A}_\varepsilon$ are finite dimensional; however, the dimension of $\mathcal{M}_{\varepsilon }$ is not necessarily uniform with respect to $\varepsilon >0$.
\end{corollary}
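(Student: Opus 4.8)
The plan is to derive both assertions from the single inclusion $\mathcal{A}_\ep \subseteq \mathcal{M}_\ep$, exactly as foreshadowed in the introduction. First I would record two facts from the theorems already proved: by Theorem \ref{t:acoustic-global} the global attractor $\mathcal{A}_\ep$ is compact in $\mathcal{H}_\ep$ and strictly invariant, $S_\ep(t)\mathcal{A}_\ep = \mathcal{A}_\ep$ for every $t \geq 0$, so in particular it is a bounded subset of $\mathcal{H}_\ep$; and by Theorem \ref{eaa}(iii) the exponential attractor $\mathcal{M}_\ep$ attracts every nonempty bounded subset of $\mathcal{H}_\ep$. Applying the attraction estimate of Theorem \ref{eaa}(iii) to $B = \mathcal{A}_\ep$ and using invariance, I obtain for every $t \geq 0$
\[
\dist_{\mathcal{H}_\ep}(\mathcal{A}_\ep, \mathcal{M}_\ep) = \dist_{\mathcal{H}_\ep}(S_\ep(t)\mathcal{A}_\ep, \mathcal{M}_\ep) \leq Q_\ep(\|\mathcal{A}_\ep\|_{\mathcal{H}_\ep}) e^{-\nu_{1\ep} t}.
\]
Since the right-hand side tends to $0$ as $t \to \infty$ while the left-hand side is independent of $t$, it must vanish; and because $\mathcal{M}_\ep$ is compact, hence closed, in $\mathcal{H}_\ep$, this yields $\mathcal{A}_\ep \subseteq \mathcal{M}_\ep$.

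With the inclusion in hand, the two conclusions are immediate. Theorem \ref{eaa} asserts that $\mathcal{M}_\ep$ is bounded in $\mathcal{D}_\ep$, so $\mathcal{A}_\ep$, being a subset, is likewise bounded in $\mathcal{D}_\ep$; this is the first claim, and it also recovers (more cheaply) the optimal-regularity statement that $\mathcal{A}_\ep$ consists of strong solutions. For the dimension estimate I would invoke monotonicity of the fractal dimension under inclusion: any finite cover of $\mathcal{M}_\ep$ by $r$-balls of $\mathcal{H}_\ep$ also covers $\mathcal{A}_\ep$, so $\mu_{\mathcal{H}_\ep}(\mathcal{A}_\ep, r) \leq \mu_{\mathcal{H}_\ep}(\mathcal{M}_\ep, r)$ for every $r > 0$, whence
\[
\dim_{\mathrm{F}}(\mathcal{A}_\ep, \mathcal{H}_\ep) \leq \dim_{\mathrm{F}}(\mathcal{M}_\ep, \mathcal{H}_\ep) \leq C_\ep < \infty,
\]
the final inequality being Theorem \ref{eaa}(ii). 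This establishes that $\mathcal{A}_\ep$ has finite fractal dimension for each fixed $\ep \in (0,1]$.

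There is no genuine analytic obstacle in this corollary; it is a formal consequence of Theorems \ref{t:acoustic-global} and \ref{eaa}, and the only point worth stressing is the failure of uniformity in $\ep$. The constant $C_\ep$ originates in the abstract construction of Proposition \ref{abstract-a}, whose hypotheses (H1)--(H3) are verified through the decay and precompactness estimates of Lemmas \ref{t:uniform-decay} and \ref{compact-a}, where the relevant rates and radii behave like $\omega_{5\ep}, \omega_{6\ep} \sim \ep$ and $Q_\ep, R_{2\ep} \sim \ep^{-1}$. As these quantities degenerate when $\ep \to 0$, the bound $C_\ep$ need not remain finite in that limit, which is precisely why we can assert finite dimension for each $\ep$ but cannot claim a bound uniform in $\ep$. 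Obtaining such a uniform bound would require $\ep$-independent versions of those estimates and remains open.
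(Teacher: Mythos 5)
Your argument is correct and is exactly the route the paper intends: the inclusion $\mathcal{A}_\ep\subseteq\mathcal{M}_\ep$ (obtained, as you do, from the invariance $S_\ep(t)\mathcal{A}_\ep=\mathcal{A}_\ep$, the exponential attraction estimate of Theorem \ref{eaa}(iii), and the closedness of the compact set $\mathcal{M}_\ep$), combined with the boundedness of $\mathcal{M}_\ep$ in $\mathcal{D}_\ep$ and monotonicity of the fractal dimension under inclusion, which the paper treats as ``readily seen'' and leaves unproved. Your closing remarks on the non-uniformity of $C_\ep$, traced to the $\ep$-dependent rates in Lemmas \ref{t:uniform-decay} and \ref{compact-a}, likewise match the paper's discussion.
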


To prove Theorem \ref{eaa}, we apply the abstract result expressed in Proposition \ref{abstract-a}.
As a preliminary step, we make an observation on the energy equation (\ref{acoustic-energy-3}) associated with Problem (A).

\begin{lemma}  \label{t:to-H1} 
Conditions (H1), (H2) and (H3) hold for each fixed $\varepsilon \in (0,1]$. 
Moreover, for each $\ep\in(0,1]$, and for each $R>0$ and $\zeta\in\mathcal{D}_\ep$ such that $\|\zeta\|_{\mathcal{D}_\ep}\le R$ for all $\ep\in(0,1]$, there exist constants $\widetilde \omega_{6\ep},\widetilde R_{2\ep}>0$, both depending on $\ep$, with $\widetilde\omega_{6\ep}\sim\ep$ and $\widetilde R_{2\ep}\sim\ep^{-1}$, in which, for all $t\ge0$, there holds
\begin{equation}  \label{a-exp-attr-2}
\|S(t)\zeta_0\|^2_{\mathcal{D}_\ep} \le Q(R)e^{-\widetilde\omega_{6\ep}t/2} + \widetilde R_{2\ep}.
\end{equation}
\end{lemma}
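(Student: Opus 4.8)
The plan is to verify the three structural hypotheses (H1)--(H3) of Proposition \ref{abstract-a} for each fixed $\ep\in(0,1]$, exploiting the splitting $S_\ep(t)\zeta_0 = K_\ep(t)\zeta_0 + Z_\ep(t)\zeta_0$ together with the bounds already recorded in Lemmas \ref{t:uniform-bound-w}, \ref{t:Gronwall-bound}, \ref{t:uniform-decay}, and \ref{compact-a}. I would establish the estimate (\ref{a-exp-attr-2}) first, since it is precisely what produces the regular absorbing set demanded by (H1). For $\zeta_0\in\mathcal{D}_\ep$ with $\|\zeta_0\|_{\mathcal{D}_\ep}\le R$, I would write $\|S_\ep(t)\zeta_0\|_{\mathcal{D}_\ep}\le\|K_\ep(t)\zeta_0\|_{\mathcal{D}_\ep}+\|Z_\ep(t)\zeta_0\|_{\mathcal{D}_\ep}$ and control the two terms separately. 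The bounded part is handled directly by (\ref{a-exp-attr-1}) of Lemma \ref{compact-a}. For the decaying part $\xi = Z_\ep(t)\zeta_0$, which solves (\ref{pde-v}) and carries the regular initial data, I would upgrade the $\mathcal{H}_\ep$-decay of Lemma \ref{t:uniform-decay} to a $\mathcal{D}_\ep$-decay by a higher-order energy estimate: differentiate (\ref{pde-v}) in $t$, test against the differentiated state, and recover the full $\mathcal{D}_\ep$-norm through $H^2$-elliptic regularity (rather than fractional powers of the Laplacian, which are unavailable under the acoustic boundary conditions). Squaring and adding, with $\widetilde\omega_{6\ep}:=\min\{\omega_{6\ep},2\omega_{5\ep}\}\sim\ep$ and $\widetilde R_{2\ep}\sim R_{2\ep}\sim\ep^{-1}$, then yields (\ref{a-exp-attr-2}).

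With (\ref{a-exp-attr-2}) in hand, (H1) follows: the $\mathcal{D}_\ep$-ball $\mathcal{B}_\ep^1 := \{\zeta\in\mathcal{D}_\ep : \|\zeta\|_{\mathcal{D}_\ep}^2\le 2\widetilde R_{2\ep}\}$ is absorbing and, once the exponential term in (\ref{a-exp-attr-2}) has dropped below $\widetilde R_{2\ep}$, positively invariant; the entry time $t_{2\ep}$ is read off from $\widetilde\omega_{6\ep}\sim\ep$ and therefore depends on $\ep$. That every bounded subset of $\mathcal{H}_\ep$ is eventually carried into $\mathcal{D}_\ep$ is exactly the transitive attraction (\ref{tran-2}) of Theorem \ref{optimal-a}, so $\mathcal{B}_\ep^1$ is genuinely absorbing in the phase space.

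For (H2) I would run the same decomposition on the difference of two trajectories. Given $\zeta_0,\xi_0\in\mathcal{B}_\ep^1$, write $\bar\zeta := S_\ep(\cdot)\zeta_0 - S_\ep(\cdot)\xi_0$ and split it as $\bar\zeta = \bar L + \bar R$, where $\bar L$ solves the homogeneous (contracting) linearized system carrying the data $\zeta_0-\xi_0$, modeled on (\ref{pde-v}), and $\bar R$ solves the inhomogeneous (smoothing) system with zero data and forcing produced by the monotone shift $\psi$, modeled on (\ref{pde-w}). Setting $L_\ep(\zeta_0,\xi_0):=\bar L(t^*)$ and $R_\ep(\zeta_0,\xi_0):=\bar R(t^*)$, the decay underlying Lemma \ref{t:uniform-decay} gives $\|L_\ep(\zeta_0,\xi_0)\|_{\mathcal{H}_\ep}\le Ce^{-\omega_{5\ep}t^*}\|\zeta_0-\xi_0\|_{\mathcal{H}_\ep}$, so choosing $t^*=t^*(\ep)$ large enough drives the prefactor below $\tfrac12$ and furnishes the required $\alpha^*\in(0,\tfrac12)$; the smoothing estimate underlying Lemma \ref{compact-a}, with the forcing bounded through the local Lipschitz continuity of $f$ (hence of $\psi$) and the continuous-dependence bound (\ref{acoustic-continuous-dependence}), gives $\|R_\ep(\zeta_0,\xi_0)\|_{\mathcal{D}_\ep}\le\Lambda^*\|\zeta_0-\xi_0\|_{\mathcal{H}_\ep}$ with $\Lambda^*=\Lambda^*(\ep)$. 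Finally, (H3) follows by combining Lipschitz dependence on the initial datum, which is (\ref{sf-a-lc}) of Corollary \ref{sf-a}, with Lipschitz dependence on $t$, which holds because points of $\mathcal{B}_\ep^1\subset\mathcal{D}_\ep$ are strong solutions whose time derivative lies in $L^\infty([0,\infty);\mathcal{H}_\ep)$ by Definition \ref{d:acoustic-strong}.

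The main obstacle is the smoothing estimate (\ref{dd-k-a}) of (H2). Because fractional powers of the Laplacian are not available here, the $\mathcal{D}_\ep$-control of $\bar R$ must be extracted by differentiating in time and invoking $H^2$-elliptic regularity, and the resulting constant $\Lambda^*$ — together with the decay rate $\omega_{5\ep}\sim\ep$ that dictates how large $t^*=t^*(\ep)$ must be taken in order to reach $\alpha^*<\tfrac12$ — degenerates like $\ep^{-1}$ as $\ep\to0$. This is precisely the non-uniformity that blocks any robustness or H\"older-continuity statement for the family $\{\mathcal{M}_\ep\}$, even though (H1)--(H3) each hold for fixed $\ep$.
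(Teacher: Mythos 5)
Your overall scheme is the one the paper intends: the paper states this lemma without a displayed proof, deferring to \cite{Frigeri10} (with $\ep$-modifications) and to the preceding Lemmas \ref{t:uniform-bound-w}--\ref{compact-a}, and the implicit argument is exactly your triangle inequality $S_\ep=K_\ep+Z_\ep$ for (\ref{a-exp-attr-2}), the contracting-plus-smoothing splitting of the difference of two trajectories for (H2) with $t^*=t^*(\ep)$ chosen so that the prefactor $Q_\ep(R)e^{-\omega_{5\ep}t^*}$ drops below $\tfrac12$ (whence $t^*\sim \ep^{-1}\ln \ep^{-1}$, the source of the non-uniformity you correctly diagnose), and (H3) from the Lipschitz dependence (\ref{sf-a-lc}) on the data together with $\partial_t\zeta\in L^\infty([0,\infty);\mathcal{H}_\ep)$ for strong solutions emanating from $\mathcal{B}^1_\ep\subset\mathcal{D}_\ep$. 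You also correctly identify the one ingredient the stated lemmas do not furnish: Lemma \ref{t:uniform-decay} controls $Z_\ep$ only in $\mathcal{H}_\ep$, so a higher-order estimate on (\ref{pde-v}) --- time differentiation plus $H^2$-elliptic regularity, fractional powers being unavailable --- must be supplied before the $\mathcal{D}_\ep$-norms can be squared and added; your bookkeeping $\widetilde\omega_{6\ep}=\min\{\omega_{6\ep},2\omega_{5\ep}\}\sim\ep$ and $\widetilde R_{2\ep}\sim\ep^{-1}$ then reproduces (\ref{a-exp-attr-2}). (An equally admissible variant, closer to \cite{Pata&Zelik06}, differentiates the full system in time and estimates $S_\ep(t)\zeta_0$ directly rather than through $Z_\ep$.)

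One claim in your verification of (H1) is wrong as stated: (\ref{tran-2}) does \emph{not} show that every bounded subset of $\mathcal{H}_\ep$ is ``eventually carried into $\mathcal{D}_\ep$.'' It gives only exponential decay of the Hausdorff semidistance $\dist_{\mathcal{H}_\ep}(S_\ep(t)B,\mathcal{U}_\ep)$, and closeness in the weak norm to a regular set never forces $S_\ep(t)B\subset\mathcal{D}_\ep$ at any finite time, so $\mathcal{B}^1_\ep$ is \emph{not} absorbing for $\mathcal{H}_\ep$-bounded sets. Fortunately (H1), as formulated in Proposition \ref{abstract-a}, only requires positive invariance $S_\ep(t)\mathcal{B}^1_\ep\subset\mathcal{B}^1_\ep$ for $t\ge t_{2\ep}$, which your estimate (\ref{a-exp-attr-2}) delivers, and absorption need only be asserted for bounded subsets of $\mathcal{D}_\ep$ (this is precisely how the remark following the lemma computes the entry time, for $B\subset\mathcal{D}_\ep$). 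The full basin $\mathcal{H}_\ep$ is then recovered only at the level of \emph{exponential attraction}, by combining (\ref{tran-2}) with the transitivity property of Proposition \ref{t:exp-attr} --- not via absorption. With that one sentence repaired, your proof is sound and follows the paper's route.
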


\begin{remark}
The ``time of entry'' of some bounded set $B\subset\mathcal{D}_\ep$ into $\mathcal{B}^1_\ep$ is given by
\[
t_{2\ep}=t_{2}(\|B\|_{\mathcal{H}_\ep},R_\ep) = \max\left\{ \frac{1}{\widetilde\omega_{6\ep}}\ln\left( \frac{Q\left(\|B\|_{\mathcal{H}_\ep}\right)}{R_\ep - \widetilde R_{2\ep}} \right),0 \right\}
\]
where $R_\ep$ is the radius of the absorbing set $\mathcal{B}^1_\ep$ in $\mathcal{D}_\ep$.
Furthermore, both $R_{2 \ep}$ and $t_{2 \ep} \rightarrow+\infty$ as $\ep\rightarrow 0$.
\end{remark}

\section{The continuity of families of sets}

This section contains a new abstract theorem (Theorem \ref{t:robustness}) concerning the upper-semicontinuity of a family of sets. 
The key assumption in the theorem involves a comparison of the semiflow corresponding to the unperturbed problem to the semiflow corresponding to the perturbation problem in the topology of the perturbation problem. 
The unperturbed problem is ``fitted'' into the phase space of the perturbed problem through the use of two maps, a {\em{canonical extension}} and {\em{lift}}. This approach for obtaining an upper-semicontinuous family of sets developed in this section is largely motivated by \cite{GGMP05}. In the setting presented in this paper, where the perturbation is isolated to the boundary condition, the upper-semicontinuity result is obtained for a broad range of families of sets and the overall analysis is much simpler. 
The result is then applied to Problem (R) and Problem (A).
In the final part of this section, we also consider the difference between Problem (A) and Problem (R), whereby, this time, we {\em{project}} Problem (A) onto the phase space for Problem (R) for estimate in $\mathcal{H}_0$.

The abstract upper-semicontinuity theorem is developed in this section.

\begin{definition} Given two bounded subsets $A$, $B$ in a Banach space $X$, the {\em Hausdorff (asymmetric) semidistance} between $A$ and $B$, in the topology of $X$, is defined by 
\[
\dist_X(A,B):=\sup_{a\in A}\inf_{b\in B}\|a-b\|_X.
\]
\end{definition} 

Suppose $X_0$ is a Banach space with the norm $\|\chi\|_{X_0}$, for all $\chi\in X_0$, and suppose $Y$ is a Banach space with the norm $\|\psi\|_Y$, for all $\psi\in Y$. For $\ep\in(0,1]$, let $X_\ep$ be the one-parameter family of Banach product spaces 
\[
X_\ep=X_0\times Y
\] 
with the $\ep$-weighted norm whose square is given by 
\[
\|(\chi,\psi)\|^2_{X_\ep}=\|\chi\|^2_{X_0} + \ep\|\psi\|^2_{Y}.
\]
For each $\ep\in(0,1]$, let $S_\ep$ be a semiflow on $X_\ep$ and let $S_0$ be a semiflow on $X_0$. Let $\Pi:X_\ep\rightarrow X_0$ be the {\em{projection}} from $X_\ep$ onto $X_0$; for every subset $B_\ep\subset X_\ep$, 
\[
\Pi B_\ep=B_0\subset X_0
\]
(recall, the correspondence between Problem (A) and Problem (R) indicated by the projection is indicated in Remark \ref{key}).

Define the ``lift'' map to map sets $B_0\subset X_0$ to sets in the product $X_\ep$. With the lift map it is possible to measure the semi-distance between sets from $X_0$ with sets in $X_\ep$, using the topology of $X_\ep$. 

\begin{definition}
Given a map $\mathcal{E}:X_0\rightarrow Y$, locally Lipschitz in $X_0$, the map $\mathcal{L}:X_0\rightarrow X_\ep$ defined by $\chi\mapsto (\chi,\mathcal{E}\chi)$ is called a {\em{lift}} map. 
The map $\mathcal{E}$ is called a {\em{canonical extension}}. 
If $B_0$ is a bounded subset of $X_0$, the set $\mathcal{E}B_0\subset Y$ is called the canonical extension of $B_0$ into $Y$, and the set 
\[
\mathcal{L}B_0=\{(\chi,\psi)\in X_\ep : \chi\in B_0, \ \psi\in \mathcal{E}B_0\}
\]
is called the lift of $B_0$ into $X_\ep$.
\end{definition}

What follows is a general description the type of families that are will be continuous in $X_\ep$.

Let $W_0$ be a bounded subset of $X_0$, and let $S_0$ be a semiflow on $X_0$. 
For each $\ep\in(0,1]$ let $W_\ep$ be a bounded subset of $X_\ep$, and let $S_\ep$ be a semiflow on $X_\ep$. 
Let $T>0$ and define the sets 
\begin{equation}\label{set-family-0}
\mathcal{U}_0 = \bigcup_{t\in[0,T]} S_0(t)W_0
\end{equation}
and
\begin{equation}\label{set-family-1}
\mathcal{U}_\ep = \bigcup_{t\in[0,T]} S_\ep(t)W_\ep.
\end{equation}
Define the family of sets $(\mathbb{U}_\ep)_{\ep\in[0,1]}$ in $X_\ep$ by
\begin{equation}\label{set-family-2}
\mathbb{U}_\ep=\left\{ \begin{array}{ll} \mathcal{U}_\ep & 0<\ep\leq 1 \\ \mathcal{L}\mathcal{U}_0 & \ep=0. \end{array} \right.
\end{equation}

\begin{remark}
The sets $W_0$ and $W_\ep$ are not assumed to be absorbing or positively invariant.
\end{remark}

\begin{theorem}\label{t:robustness}
Suppose that the semiflow $S_0$ is Lipschitz continuous on $X_0$, uniformly in $t$ on compact intervals. 
Suppose the lift map $\mathcal{L}$ satisfies the following: for $T>0$ and for any bounded set $B_\ep$ in $X_\ep$, there exists $M=M(\|B_\ep\|_{X_\ep})>0$, depending on $B_\ep$, $\rho\in(0,1]$, both independent of $\ep$, such that for all $t\in [0,T]$ and $(\chi,\psi)\in B_\ep$,
\begin{equation}\label{robustness}
\|S_\ep(t)(\chi,\psi)-\mathcal{L}S_0(t)\Pi(\chi,\psi)\|_{X_\ep}\leq M\ep^\rho.
\end{equation}
Then the family of sets $(\mathbb{U}_\ep)_{\ep\in[0,1]}$ is upper-semicontinuous in the topology of $X_\ep$; precisely, 
\[
{\rm dist}_{X_\ep}(\mathbb{U}_\ep,\mathbb{U}_0) \le M\ep^\rho.
\]
\end{theorem}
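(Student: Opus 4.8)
The plan is to bound the Hausdorff semidistance directly from its definition, by exhibiting, for an arbitrary element of $\mathbb{U}_\ep$, a single explicit competitor in $\mathbb{U}_0$ whose distance is controlled by the robustness hypothesis (\ref{robustness}). First I would fix $\ep\in(0,1]$ and take an arbitrary $a\in\mathbb{U}_\ep=\mathcal{U}_\ep$. By the definition (\ref{set-family-1}) there exist $t\in[0,T]$ and $(\chi,\psi)\in W_\ep$ with $a=S_\ep(t)(\chi,\psi)$. The natural competitor is then $b:=\mathcal{L}S_0(t)\Pi(\chi,\psi)$, i.e.\ the element obtained by projecting the initial datum into $X_0$, evolving it under the unperturbed semiflow, and lifting the result back into $X_\ep$.

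The crux of the bookkeeping is to check that $b$ indeed lies in $\mathbb{U}_0$. Since $\Pi W_\ep=W_0$, the projection $\Pi(\chi,\psi)=\chi$ belongs to $W_0$; because $t\in[0,T]$, the point $S_0(t)\chi$ lies in $\mathcal{U}_0$ by (\ref{set-family-0}); and therefore $b=\mathcal{L}(S_0(t)\chi)\in\mathcal{L}\mathcal{U}_0=\mathbb{U}_0$ by (\ref{set-family-2}). Applying (\ref{robustness}) with the bounded set $B_\ep=W_\ep$ (so that $M=M(\|W_\ep\|_{X_\ep})$), and using $(\chi,\psi)\in W_\ep$ together with $t\in[0,T]$, I obtain
\[
\inf_{b'\in\mathbb{U}_0}\|a-b'\|_{X_\ep}\le\|a-b\|_{X_\ep}=\|S_\ep(t)(\chi,\psi)-\mathcal{L}S_0(t)\Pi(\chi,\psi)\|_{X_\ep}\le M\ep^\rho.
\]
Because the right-hand side does not depend on the particular $a$, taking the supremum over $a\in\mathbb{U}_\ep$ gives $\dist_{X_\ep}(\mathbb{U}_\ep,\mathbb{U}_0)\le M\ep^\rho$, which is the assertion.

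In this abstract form the estimate itself is essentially immediate once (\ref{robustness}) is granted, so the only genuinely delicate points are structural. One must ensure the Hausdorff semidistance is meaningful, i.e.\ that both sets are bounded: here the uniform-in-$t$ Lipschitz continuity of $S_0$ on $[0,T]$, together with the continuity of individual orbits, is what guarantees $\mathcal{U}_0$ is bounded, and hence that its lift $\mathcal{L}\mathcal{U}_0=\mathbb{U}_0$ is bounded (the canonical extension $\mathcal{E}$ being only \emph{locally} Lipschitz, boundedness of $\mathcal{U}_0$ must be secured before applying it); boundedness of $\mathcal{U}_\ep$ then follows \emph{a posteriori} from (\ref{robustness}). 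I expect the real obstacle to lie not in this theorem but in its application, namely in verifying the hypothesis (\ref{robustness}) with an explicit exponent --- here $\rho=\frac{1}{2}$ --- for Problem (A) against Problem (R), which rests on the $\sqrt{\ep}$ estimate for the difference of two trajectories launched from the same (lifted) data over a compact time interval.
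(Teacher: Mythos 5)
Your proof is correct and takes essentially the same route as the paper: both bound the semidistance by pairing each $a=S_\ep(t)\alpha\in\mathcal{U}_\ep$ with the competitor $\mathcal{L}S_0(t)\Pi\alpha$, which lies in $\mathbb{U}_0$ precisely because $\Pi W_\ep=W_0$, and then invoke hypothesis (\ref{robustness}). The only difference is cosmetic: the paper routes the estimate through a triangle inequality against an arbitrary $\theta\in W_0$ together with the Lipschitz bounds for $S_0$ and $\mathcal{E}$, all of which collapse once $\theta=\Pi\alpha$ is chosen --- exactly the shortcut your direct argument takes, with the Lipschitz hypothesis playing only the structural role (boundedness of $\mathcal{U}_0$ and its lift) that you correctly identify.
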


\begin{proof}
To begin,
\[
\dist_{\mathcal{H}_\ep}(\mathbb{U}_\ep,\mathbb{U}_0) =  \sup_{a\in\mathcal{U}_\ep}\inf_{b\in\mathcal{L}\mathcal{U}_0}\|a-b\|_{X_\ep}.
\]
Fix $t\in[0,T]$ and $\alpha\in W_\ep$ so that $a=S_\ep(t)\alpha\in\mathcal{U}_\ep$. Then 
\begin{align}
\inf_{b\in\mathcal{L}\mathcal{U}_0}\|a-b\|_{X_\ep} & = \inf_{\substack{\tau\in [0,T] \\ \theta\in W_0}}\|S_\ep(t)\alpha-\mathcal{L}S_0(\tau)\theta\|_{X_\ep}  \notag \\
& \leq \inf_{\theta\in W_0}\|S_\ep(t)\alpha-\mathcal{L}S_0(t)\theta\|_{X_\ep}.  \notag
\end{align}
Since $S_\ep(t)\alpha=a$, 
\begin{align}
\sup_{\alpha\in W_\ep}\inf_{b\in\mathcal{L}\mathcal{U}_0}\|S_\ep(t)\alpha-b\|_{X_\ep} & \leq \sup_{\alpha\in W_\ep}\inf_{\theta\in W_0}\|S_\ep(t)\alpha-\mathcal{L}S_0(t)\theta\|_{X_\ep}   \notag \\ 
& =\dist_{X_\ep}(S_\ep(t)W_\ep,\mathcal{L}S_0(t)W_0)  \notag \\
& \leq \max_{t\in [0,T]}\dist_{X_\ep}(S_\ep(t)W_\ep,\mathcal{L}S_0(t)W_0). \notag
\end{align}
Thus, 
\begin{align}
\sup_{t\in [0,T]}\sup_{\alpha\in W_\ep}\inf_{b\in\mathcal{L}\mathcal{U}_0} \|S_\ep(t)\alpha-b\|_{X_\ep} \leq \max_{t\in [0,T]}\dist_{X_\ep}(S_\ep(t)W_\ep,\mathcal{L}S_0(t)W_0),  \notag
\end{align}
and 
\begin{align}
\sup_{a\in\mathcal{U}_\ep}\inf_{b\in\mathcal{L}\mathcal{U}_0}\|a-b\|_{X_\ep} & \leq \sup_{t\in [0,T]}\sup_{\alpha\in W_\ep}\inf_{b\in\mathcal{L}\mathcal{U}_0} \|S_\ep(t)\alpha-b\|_{X_\ep}  \notag \\
& \leq \max_{t\in [0,T]}\dist_{X_\ep}(S_\ep(t) W_\ep,\mathcal{L}S_0(t)W_0)  \notag \\
& \leq \max_{t\in [0,T]} \sup_{\alpha\in W_\ep}\inf_{\theta\in W_0}\|S_\ep(t)\alpha-\mathcal{L}S_0(t)\theta\|_{X_\ep}.  \notag
\end{align}

The norm is then expanded
\begin{align}
\|S_\ep(t)\alpha-\mathcal{L}S_0(t)\theta\|_{X_\ep} \leq \|S_\ep(t)\alpha & - \mathcal{L}S_0(t)\Pi\alpha\|_{X_\ep}  \notag \\
& + \|\mathcal{L}S_0(t)\Pi\alpha-\mathcal{L}S_0(t)\theta\|_{X_\ep} \label{4-1}
\end{align}
so that by the assumption described in (\ref{robustness}), there is a constant $M>0$ such that for all $t\in[0,T]$ and for all $\alpha\in W_\ep$,
\[
\|S_\ep(t)\alpha-\mathcal{L}S_0(t)\Pi\alpha\|_{X_\ep} \leq M\ep^\rho.
\]
Expand the square of the norm on the right hand side of (\ref{4-1}) to obtain, for $\Pi\alpha=\Pi(\alpha_1,\alpha_2)=\alpha_1\in X_0$ and $\theta\in X_0$,
\begin{equation}\label{triangle-r}
\|\mathcal{L}S_0(t)\Pi\alpha-\mathcal{L}S_0(t)\theta\|^2_{X_\ep} = \|S_0(t)\Pi\alpha-S_0(t)\theta\|^2_{X_0} + \ep\|\mathcal{E}S_0(t)\Pi\alpha - \mathcal{E}S_0(t)\theta\|^2_Y.
\end{equation}
By the local Lipschitz continuity of $\mathcal{E}$ on $X_0$, and by the local Lipschitz continuity of $S_0$ on $X_0$, there is $L>0$, depending on $W_0$, but independent of $\ep$, such that (\ref{triangle-r}) can be estimated by 
\[
\|\mathcal{L}S_0(t)\Pi\alpha-\mathcal{L}S_0(t)\theta\|^2_{X_\ep} \leq L^2(1+\ep)\|\Pi\alpha-\theta\|^2_{X_0}.
\]
Hence, (\ref{4-1}) becomes
\[
\|S_\ep(t)\alpha-\mathcal{L}S_0(t)\theta\|_{X_\ep} \leq M\ep^\rho + L\sqrt{1+\ep}\|\Pi\alpha-\theta\|_{X_0}
\]
and 
\[
\inf_{\theta\in W_0}\|S_\ep(t)\alpha-\mathcal{L}S_0(t)\theta\|_{X_\ep} \leq M\ep^\rho + L\sqrt{1+\ep}\inf_{\theta=\Pi\alpha}\|\Pi\alpha-\theta\|_{X_0}.
\]
Since $\Pi\alpha\in\Pi W_\ep=W_0$, then it is possible to choose $\theta\in W_0$ to be $\theta=\Pi\alpha$. Therefore, 
\[
\dist_{X_\ep}(\mathbb{U}_\ep,\mathbb{U}_0) = \sup_{\alpha\in W_\ep}\inf_{\theta\in W_0}\|S_\ep(t)\alpha-\mathcal{L}S_0(t)\theta\|_{X_\ep} \leq M\ep^\rho.
\]
This establishes the upper-semicontinuity of the sets $\mathbb{U}_\ep$ in $X_\ep$. 
\end{proof}

\begin{remark}
The upper-semicontinuous result given in Theorem \ref{t:robustness} is reminiscent of robustness results (cf. \cite{GGMP05}) in-so-far as we obtain explicit control over the semidistance in terms of the perturbation parameter $\ep$. 
\end{remark}

\subsection{The upper-semicontinuity of the family of global attractors for the model problems}

The goal of this section is to show that the assumptions of Theorem \ref{t:robustness} are meet. The conclusion is that the family of global attractors for the model problem are upper-semicontinuous.

First, compared to the previous section, $X_0=\mathcal{H}_0$, $Y=L^2(\Gamma)\times L^2(\Gamma)$, and $X_\ep=X_0\times Y=\mathcal{H}_\ep$. Recall that by equation (\ref{S0-Lipschitz-continuous}), $S_0$ is locally Lipschitz continuous on $\mathcal{H}_0$. Define the projection $\Pi:\mathcal{H}_\ep\rightarrow\mathcal{H}_0$ by 
\[
\Pi(u,v,\delta,\gamma) = (u,v);
\]
thus, for every subset $E_\ep\subset\mathcal{H}_\ep$, $\Pi E_\ep=E_0\subset\mathcal{H}_0$. Define the canonical extension $\mathcal{E}:\mathcal{H}_0\rightarrow L^2(\Gamma)\times L^2(\Gamma)$ by, for all $(u,v)\in\mathcal{H}_0$,
\[
\mathcal{E}(u,v)=(0,-u).
\]
Clearly, $\mathcal{E}$ is locally Lipschitz on $\mathcal{H}_0$. Then the lift map, $\mathcal{L}:\mathcal{H}_0\rightarrow\mathcal{H}_\ep$, is defined, for any bounded set $E_0$ in $\mathcal{H}_0$, by
\[
\mathcal{L}E_0:=\{(u,v,\delta,\gamma)\in\mathcal{H}_\ep:(u,v)\in E_0, \delta=0, \gamma=-u \}.
\]
Recall that $\gamma=\delta_t$ in distributions, so the Robin boundary condition---in $u_t$---is achieved by differentiating the last equation with respect to $t$; hence, $\delta_{tt}=-u_t$. The condition $\delta=0$ highlights the drop in rank the Robin problem possesses when $\ep=0$ compared to the acoustic boundary condition when $\ep>0$.

The main result of the section is 

\begin{theorem}  \label{upper}
Assume (\ref{f-assumption-1}), (\ref{f-assumption-2}), and (\ref{g-reg-ass-1}) hold.
Let $\mathcal{A}_0$ denote the global attractor corresponding to Problem (R) and for each $\ep\in(0,1]$, let $\mathcal{A}_\ep$ denote the global attractor corresponding to Problem (A). 
The family of global attractors $(\mathbb{A}_\ep)_{\ep\in[0,1]}$ in $\mathcal{H}_\ep$ defined by
\[
\mathbb{A}_\ep=\left\{ \begin{array}{ll} \mathcal{A}_\ep & 0<\ep\leq 1 \\ \mathcal{L}\mathcal{A}_0 & \ep=0. \end{array} \right.
\]
is upper-semicontinuous in $\mathcal{H}_\ep$, with explicit control over semi-distances in terms of $\ep$. 
(Note: we are not claiming that $\mathcal{LA}_0$ is a global attractor for Problem (R) in $\mathcal{H}_\ep.$)
\end{theorem}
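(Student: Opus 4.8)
The plan is to apply the abstract result Theorem~\ref{t:robustness} with the identifications already fixed in this subsection, namely $X_0=\mathcal{H}_0$, $Y=L^2(\Gamma)\times L^2(\Gamma)$, $X_\ep=\mathcal{H}_\ep$, the projection $\Pi(u,v,\delta,\gamma)=(u,v)$, the canonical extension $\mathcal{E}(u,v)=(0,-u)$, and the lift $\mathcal{L}(u,v)=(u,v,0,-u)$, and then to invoke that theorem with $W_0=\mathcal{A}_0$ and $W_\ep=\mathcal{A}_\ep$. Its first hypothesis, Lipschitz continuity of $S_0$ on $\mathcal{H}_0$ uniformly in $t$ on compact intervals, is exactly~(\ref{S0-Lipschitz-continuous}), and $\mathcal{E}$ is plainly Lipschitz. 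Because $\mathcal{A}_0$ and $\mathcal{A}_\ep$ are invariant, $\bigcup_{t\in[0,T]}S_0(t)\mathcal{A}_0=\mathcal{A}_0$ and $\bigcup_{t\in[0,T]}S_\ep(t)\mathcal{A}_\ep=\mathcal{A}_\ep$, so the family $(\mathbb{U}_\ep)$ of~(\ref{set-family-2}) coincides with $(\mathbb{A}_\ep)$; moreover each $\mathcal{A}_\ep$ lies in the absorbing ball $\mathcal{B}_\ep$ of radius $R_1$ uniform in $\ep$ (Lemma~\ref{t:a-abs-set}), so one bounded set serves all $\ep$. Everything therefore reduces to verifying the robustness estimate~(\ref{robustness}), which I expect to hold with $\rho=\tfrac12$.

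To check~(\ref{robustness}), fix data $\zeta_0=(u_0,u_1,\delta_0,\delta_1)$, let $(\bar u,\bar u_t,\delta,\delta_t)=S_\ep(t)\zeta_0$ solve Problem~(A) (with $g\equiv0$) and $(u,u_t)=S_0(t)\Pi\zeta_0$ solve Problem~(R), so that $\mathcal{L}S_0(t)\Pi\zeta_0=(u,u_t,0,-u)$. Introduce the difference variables $p:=\bar u-u$ on $\Omega$ and $q:=\delta$, $r:=\delta_t+u$ on $\Gamma$, for which
\[
\|S_\ep(t)\zeta_0-\mathcal{L}S_0(t)\Pi\zeta_0\|^2_{\mathcal{H}_\ep}=\|p\|_1^2+\|p_t\|^2+\|r\|^2_{L^2(\Gamma)}+\ep\|q\|^2_{L^2(\Gamma)}=:\Phi(t).
\]
Subtracting the two wave equations gives $p_{tt}+p_t-\Delta p+p+f(\bar u)-f(u)=0$; subtracting the two boundary laws (recall $\partial_{\bf{n}}\bar u=\delta_t$ and $\partial_{\bf{n}}u=-u$) gives the coupling $\partial_{\bf{n}}p=r$ on $\Gamma$; and differentiating $\delta_{tt}=-\bar u_t-\ep(\delta_t+\delta)$ yields $r_t=-(p_t)_{\mid\Gamma}-\ep r+\ep u-\ep q$, together with $q_t=\delta_t=r-u$.

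The estimate is then produced by multiplying the $p$-equation by $2p_t$ in $L^2(\Omega)$; the boundary contribution $2\langle r,(p_t)_{\mid\Gamma}\rangle_{L^2(\Gamma)}$ is rewritten, using the formula for $r_t$, as $-\tfrac{\diff}{\diff t}\|r\|^2_{L^2(\Gamma)}-2\ep\|r\|^2_{L^2(\Gamma)}$ plus terms of order $\ep$. Adding the identity obtained by testing $q_t=r-u$ with $2\ep q$, the cross terms $\pm2\ep\langle q,r\rangle_{L^2(\Gamma)}$ cancel and one reaches
\[
\frac{\diff}{\diff t}\Phi+2\|p_t\|^2+2\ep\|r\|^2_{L^2(\Gamma)}+2\langle f(\bar u)-f(u),p_t\rangle=2\ep\langle r,u\rangle_{L^2(\Gamma)}-2\ep\langle q,u\rangle_{L^2(\Gamma)}.
\]
Since $\bar u,u$ stay in a fixed ball of $H^1(\Omega)$ by the uniform bounds~(\ref{Robin-bound-1}) and~(\ref{acoustic-bound}), the local Lipschitz continuity of $f\colon H^1(\Omega)\to L^2(\Omega)$ bounds the nonlinear term by $C\|p\|_1^2+\|p_t\|^2$, while the trace inequality controls the boundary integrals, so that the whole right-hand side is dominated by $C\Phi+C\ep$; a Gr\"onwall argument on $[0,T]$ gives $\Phi(t)\le e^{CT}\Phi(0)+C\ep$. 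The crucial input is that the initial mismatch is of order $\ep$: thanks to the reparametrised data~(\ref{acoustic-initial-conditions2}) one has $p(0)=p_t(0)=0$, $q(0)=\delta_0$, and $r(0)=\delta_t(0)+u_0=\ep(\delta_1+u_0)$, whence $\Phi(0)=\ep^2\|\delta_1+u_0\|^2_{L^2(\Gamma)}+\ep\|\delta_0\|^2_{L^2(\Gamma)}\le C\ep$ for data bounded independently of $\ep$. Thus $\Phi(t)\le M^2\ep$ on $[0,T]$, which is~(\ref{robustness}) with $\rho=\tfrac12$, and Theorem~\ref{t:robustness} delivers $\dist_{\mathcal{H}_\ep}(\mathbb{A}_\ep,\mathbb{A}_0)\le M\sqrt{\ep}$.

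I expect the main obstacle to be the energy estimate underlying~(\ref{robustness}): arranging the boundary coupling so that the uncontrolled trace term $2\langle r,(p_t)_{\mid\Gamma}\rangle_{L^2(\Gamma)}$ is absorbed exactly into $-\tfrac{\diff}{\diff t}\|r\|^2_{L^2(\Gamma)}$ through the evolution law for $r$, checking that the residual coupling is genuinely $O(\ep)$ (here the $\ep$-weighting of $\mathcal{H}_\ep$ in the degenerate $\delta$-component and the choice~(\ref{acoustic-initial-conditions2}) are essential in keeping $\Phi(0)=O(\ep)$), and ensuring that the constant $M$ depends only on the uniform absorbing radius $R_1$ and on $T$, not on $\ep$.
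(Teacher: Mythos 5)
Your proposal is correct and takes essentially the same approach as the paper: the reduction to Theorem \ref{t:robustness} via invariance of the attractors (with $W_0=\mathcal{A}_0$, $W_\ep=\mathcal{A}_\ep$) is exactly the paper's proof of Theorem \ref{upper}, and your energy--Gr\"onwall verification of (\ref{robustness}) with $\rho=\tfrac{1}{2}$ reproduces the paper's Lemma \ref{compare}, your variables $(p,q,r)=(\bar u-u,\,\delta,\,\delta_t+u)$ being equivalent to the paper's $(z,w,w_t)$ built from the auxiliary boundary function $\bar\delta$ with $\bar\delta_t=-u$, $\bar\delta(0)=0$. The only cosmetic difference is that your choice makes $\Phi(t)$ equal to the squared $\mathcal{H}_\ep$-distance directly, so you skip the paper's concluding triangle-inequality step (\ref{final-2})--(\ref{final-4}); like the paper, you rely crucially on the reparametrised data (\ref{acoustic-initial-conditions2}) and the $\ep$-uniform attractor bounds to obtain $\Phi(0)=O(\ep)$.
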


The following claim establishes the assumption made in equation (\ref{robustness}).
The claim indicates that trajectories on $\mathcal{A}_0$ and $\mathcal{A}_\ep$, with the same initial data, may be estimated, on compact time intervals and in the topology of $\mathcal{H}_\ep$, by a constant depending on the radii of the absorbing sets $\mathcal{B}_\ep$ and by the perturbation parameter $\ep$ to some power (recall Remark \ref{r:time-1}, these radii are bounded independent of $\ep$).

\begin{lemma}  \label{compare}
Let $T>0$. 
There is a constant $\Lambda_1>0$, independent of $\ep$, such that, for all $t\in[0,T]$ and for all $\zeta_0\in \mathcal{A}_\ep$,
\begin{equation}  \label{robust-7}
\|S_\ep(t)\zeta_0 - \mathcal{L}S_0(t)\Pi\zeta_0\|_{\mathcal{H}_\ep} \le \Lambda_1\sqrt{\ep}.
\end{equation}
\end{lemma}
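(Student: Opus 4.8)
The plan is to compare the Problem~(A) trajectory issued from $\zeta_0\in\mathcal{A}_\ep$ with the lifted Problem~(R) trajectory issued from $\Pi\zeta_0$, and to control their difference by a single energy functional that coincides with the squared $\mathcal{H}_\ep$-distance appearing in (\ref{robust-7}). Write $S_\ep(t)\zeta_0=(u,u_t,\delta,\delta_t)$ and $\mathcal{L}S_0(t)\Pi\zeta_0=(\bar u,\bar u_t,0,-\bar u)$, where $(\bar u,\bar u_t)=S_0(t)\Pi\zeta_0$ solves Problem~(R). Set $z:=u-\bar u$ and $\nu:=\delta_t+\bar u$. First I would record the two structural facts that drive everything: since $\Pi\zeta_0=(u_0,u_1)$ furnishes the same $u$-data, one has $z(0)=0$ and $z_t(0)=0$; and subtracting the boundary conditions $\partial_{\bf{n}}u=\delta_t$ (from (\ref{acoustic-boundary-2})) and $\partial_{\bf{n}}\bar u=-\bar u$ (from (\ref{Robin-boundary})) gives the identity $\partial_{\bf{n}}z=\delta_t+\bar u=\nu$ on $\Gamma$. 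Thus $\nu$ is simultaneously the fourth-component mismatch and the Neumann trace of $z$, which is exactly what couples the interior and the boundary estimates.

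Next I would run the basic energy estimate on $z$, which solves $z_{tt}+z_t-\Delta z+z+f(u)-f(\bar u)=0$. Multiplying by $2z_t$ and integrating over $\Omega$ produces
\[
\frac{\diff}{\diff t}\big(\|z_t\|^2+\|z\|^2_1\big)+2\|z_t\|^2-2\int_\Gamma\nu\, z_t\,\diff\sigma+2\langle f(u)-f(\bar u),z_t\rangle=0 .
\]
The only non-routine term is the boundary integral. Here I would use the acoustic equation (with $g\equiv0$): differentiating $\nu=\delta_t+\bar u$ and substituting $\delta_{tt}=-u_t-\ep\delta_t-\ep\delta$ yields $z_t=-\nu_t-\ep\delta_t-\ep\delta$ on $\Gamma$, whence $-2\int_\Gamma\nu z_t=\frac{\diff}{\diff t}\|\nu\|^2_{L^2(\Gamma)}+2\ep\int_\Gamma\nu\delta_t+2\ep\int_\Gamma\nu\delta$. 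Adding $\frac{\diff}{\diff t}(\ep\|\delta\|^2_{L^2(\Gamma)})=2\ep\int_\Gamma\delta\delta_t$ to both sides and substituting $\delta_t=\nu-\bar u$ on $\Gamma$, I expect the genuinely coupled terms $\mp2\ep\int_\Gamma\nu\delta$ to cancel, leaving $-2\ep\|\nu\|^2_{L^2(\Gamma)}$ (a helpful sign) together with the lower-order remainders $2\ep\int_\Gamma\nu\bar u-2\ep\int_\Gamma\delta\bar u$. Writing $\mathcal{E}(t):=\|z\|^2_1+\|z_t\|^2+\|\nu\|^2_{L^2(\Gamma)}+\ep\|\delta\|^2_{L^2(\Gamma)}$, which equals $\|S_\ep(t)\zeta_0-\mathcal{L}S_0(t)\Pi\zeta_0\|^2_{\mathcal{H}_\ep}$, this reorganizes into a differential inequality for the exact quantity we must estimate.

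It then remains to absorb the surviving terms. The local Lipschitz bound for $f:H^1(\Omega)\to L^2(\Omega)$, applied on the set where $u$ and $\bar u$ live---both bounded in $H^1(\Omega)$ uniformly in $\ep$ and $t\in[0,T]$ by the uniform bound (\ref{acoustic-bound}) on $\mathcal{A}_\ep$ (see (\ref{g-attr-bound})) and by (\ref{Robin-bound-1})---gives $2|\langle f(u)-f(\bar u),z_t\rangle|\le C\|z\|^2_1+\|z_t\|^2$ with $C$ independent of $\ep$. Since $\bar u$ has a uniformly bounded trace on $\Gamma$, Young's inequality renders the remaining boundary terms bounded by $\mathcal{E}$ plus a forcing of order $\ep$ (the $\ep\|\nu\|^2_{L^2(\Gamma)}$ contribution being absorbed by the dissipative $-2\ep\|\nu\|^2_{L^2(\Gamma)}$ and $\ep\le1$). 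This yields $\frac{\diff}{\diff t}\mathcal{E}\le C\mathcal{E}+C\ep$ on $[0,T]$, so Gr\"onwall's inequality gives $\mathcal{E}(t)\le e^{CT}\mathcal{E}(0)+C\ep(e^{CT}-1)$; choosing $\Lambda_1=\sqrt{Ce^{CT}}$ then produces (\ref{robust-7}), \emph{provided} $\mathcal{E}(0)\le C\ep$. I expect the control of $\mathcal{E}(0)$ to be the main obstacle, and it is precisely where the tailored initial data (\ref{acoustic-initial-conditions2}) enter: with $\delta_t(0)=\ep\delta_1-(1-\ep)u_0$ one finds $\nu(0)=\ep(\delta_1+u_0)$, so $\|\nu(0)\|^2_{L^2(\Gamma)}=O(\ep^2)$. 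The delicate part is the $\ep$-weighted third-component mismatch $\ep\|\delta(0)\|^2_{L^2(\Gamma)}$: here one must invoke the uniform $\mathcal{H}_\ep$-bound on $\mathcal{A}_\ep$ from Remark~\ref{r:time-1} to guarantee that this term, and hence $\mathcal{E}(0)$, is genuinely of order $\ep$ uniformly in $\ep$---the step on which the whole semidistance estimate ultimately rests.
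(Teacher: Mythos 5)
Your argument is, in substance, the paper's own proof with one organizational simplification. The paper introduces the auxiliary boundary function $\bar\delta$ solving $\bar\delta_{tt}=-\bar u_t$, $\bar\delta(0)=0$, $\bar\delta_t(0)=-u_0$, and runs the energy estimate on the difference system (\ref{z-difference}) in the variables $z=u-\bar u$ and $w=\delta-\bar\delta$: it multiplies the interior equation by $2z_t$ and the boundary equation by $2w_t$, uses precisely your cancellation $\partial_{\bf n}z=w_t$ (your $\nu$ is exactly $w_t$, since $\bar\delta_t=-\bar u$), and arrives at (\ref{robust-3}), whose remainder $2\ep\langle\bar\delta_t+\bar\delta,w_t\rangle_{L^2(\Gamma)}$ is the counterpart of your remainders $2\ep\int_\Gamma(\nu-\delta)\bar u\,\diff\sigma$ after your cancellation of the $\ep\int_\Gamma\nu\delta$ terms. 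Because the paper's energy involves $\ep\|w\|^2_{L^2(\Gamma)}$ rather than $\ep\|\delta\|^2_{L^2(\Gamma)}$, it must convert back to the true $\mathcal{H}_\ep$-distance at the end via the triangle inequalities (\ref{final-3})--(\ref{final-4}) together with the uniform bound on $\bar\delta$ over $[0,T]$; your choice of energy $\mathcal{E}$ --- exactly the squared distance --- builds that step in, at the harmless price of the extra term $2\ep\int_\Gamma\delta\bar u\,\diff\sigma$, which you correctly dominate by $\mathcal{E}+C\ep$. Your algebra (the sign of the dissipative $2\ep\|\nu\|^2_{L^2(\Gamma)}$, the identity $z_t=-\nu_t-\ep\delta_t-\ep\delta$ on $\Gamma$), the Lipschitz estimate for $f$ with $\ep$-independent constant, and the computation $\nu(0)=\ep(\delta_1+u_0)$ from (\ref{acoustic-initial-conditions2}) all match the paper's steps.

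The one point on which you should be careful is exactly the step you flag: the term $\ep\|\delta(0)\|^2_{L^2(\Gamma)}=\ep\|\delta_0\|^2_{L^2(\Gamma)}$ in $\mathcal{E}(0)$. The uniform bound of Remark \ref{r:time-1} is a bound in the \emph{weighted} norm of $\mathcal{H}_\ep$, so it yields only $\ep\|\delta_0\|^2_{L^2(\Gamma)}\le R_1^2$, i.e.\ $O(1)$, not the $O(\ep)$ your Gr\"onwall argument requires; what is actually needed is an $\ep$-independent \emph{unweighted} $L^2(\Gamma)$ bound on the $\delta$-component of $\mathcal{A}_\ep$, which the cited remark does not supply. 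You should know, however, that the paper's own proof makes the identical leap in passing from (\ref{robust-6}) to (\ref{final-1}), where the corresponding term $\ep\|w(0)\|^2_{L^2(\Gamma)}=\ep\|\delta_0\|^2_{L^2(\Gamma)}$ is absorbed without comment into $\ep\cdot C(R_0,R_1,\Omega)$. So your proposal faithfully reproduces the published argument, its weakest step included --- and you deserve credit for isolating that step explicitly --- but the justification you offer for it (the weighted uniform bound) does not, as stated, close it.
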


\begin{proof}
Let $u$ denote the weak solution of Problem (A) corresponding to the initial data $\zeta_0=(u_0,u_1,\delta_0,\delta_1)\in \mathcal{A}_\ep$, and let $\bar u$ denote the weak solution of Problem (R) corresponding to the initial data $\Pi\zeta_0=(u_0,u_1)\in \mathcal{A}_0$. 
Rewrite the Robin boundary condition as the following system in $\bar u$ and $\bar\delta$,
\begin{equation}\label{Robin-system-1}
\left\{\begin{array}{l} \bar\delta_t = -\bar u \\ 
\bar\delta_t = \partial_{\bf{n}}{\bar{u}}.
\end{array}\right.
\end{equation}
To compare the Robin problem with the acoustic problem in $\mathcal{H}_\ep$, the first equation is differentiated with respect to $t$ and the corresponding system is equipped with initial conditions,
\[
\left\{\begin{array}{l} \bar\delta_{tt} = -\bar u_t \\ 
\bar\delta_t = \partial_{\bf{n}}{\bar{u}} \\
\bar\delta(0,\cdot)=0, \ \bar\delta_t(0,\cdot)=-u_0. 
\end{array}\right.
\]
Observe that through the definition of the lift map, \[
(\bar\delta(0,\cdot),\bar\delta_t(0,\cdot))=\mathcal{E}(u(0,\cdot),u_t(0,\cdot))=(0,-u_0).
\]
 
Let $z=u-\bar u$ and $w=\delta-\bar\delta$; hence, $z$ and $w$ satisfy the system
\begin{equation}\label{z-difference}
\left\{\begin{array}{ll} z_{tt} + z_t - \Delta z + z + f(u) - f(\bar u) = 0 & \text{in} \ (0,\infty)\times\Omega \\
z(0,\cdot)=0, \ z_t(0,\cdot)=0 & \text{on} \ \{0\}\times\Omega \\ 
w_{tt} + \ep(w_t + w) - \ep(\bar\delta_t + \bar\delta) = -z_t & \text{on} \ (0,\infty)\times\Gamma \\
w_t = \partial_{\bf{n}}z & \text{on} \ (0,\infty)\times\Gamma \\
\ep w(0,\cdot)=\ep\delta_0, \ w_t(0,\cdot)=\ep(\delta_1+u_0) & \text{on} \ \{0\}\times\Gamma. \end{array}\right.
\end{equation}
Multiply equation (\ref{z-difference})$_1$ by $2z_t$ in $L^2(\Omega)$ to obtain
\begin{align}
\frac{\diff}{\diff t} & \|z_t\|^2 + 2\|z_t\|^2 + \frac{\diff}{\diff t}\|\nabla z\|^2 - 2\left\langle \partial_{\bf{n}}z,z_t \right\rangle_{L^2(\Gamma)}  \notag \\
& + \frac{\diff}{\diff t}\|z\|^2 + 2\langle f(u)-f(\bar u),z_t \rangle = 0. \label{robust-1}
\end{align}
Multiply equation (\ref{z-difference})$_3$ by $2w_t$ in $L^2(\Gamma)$, to obtain
\begin{align}
\frac{\diff}{\diff t} & \|w_t\|^2_{L^2(\Gamma)} + 2\ep\|w_t\|^2_{L^2(\Gamma)} + \ep\frac{\diff}{\diff t}\|w\|^2_{L^2(\Gamma)}  \notag \\ 
& - 2\ep\langle \bar\delta_t+\bar\delta, w_t \rangle_{L^2(\Gamma)} = - 2\langle z_t, w_t \rangle_{L^2(\Gamma)}.  \label{robust-2}
\end{align}
Since $w_t=\partial_{\bf{n}}z$ on the boundary $\Gamma$, then $ - 2\langle \partial_{\bf{n}}z, z_t \rangle_{L^2(\Gamma)} = -2\langle z_t, w_t \rangle_{L^2(\Gamma)}$. 
Hence, summing equations (\ref{robust-1}) and (\ref{robust-2}) gives, for almost all $t\ge0,$
\begin{align}
\frac{\diff}{\diff t} & \left\{ \|z\|^2_1 + \|z_t\|^2 + \ep\|w\|^2_{L^2(\Gamma)} + \|w_t\|^2_{L^2(\Gamma)} \right\} + 2\|z_t\|^2 + 2\ep\|w_t\|^2_{L^2(\Gamma)}  \notag \\
& - 2\ep\langle \bar\delta_t+\bar\delta, w_t \rangle_{L^2(\Gamma)} + 2\langle f(u)-f(\bar u),z_t \rangle = 0. \label{robust-3}
\end{align}
Estimating the first product yields,
\[
2\ep|\langle \bar\delta_t+\bar\delta,w_t \rangle_{L^2(\Gamma)}| \leq 2\ep^2(\|\bar\delta_t\|^2_{L^2(\Gamma)}+\|\bar\delta\|^2_{L^2(\Gamma)}) + \|w_t\|^2_{L^2(\Gamma)}.
\]
From (\ref{Robin-system-1}), by comparing directly with the solution of the Problem (R), we find that $\bar u\in L^2([0,\infty);H^1(\Omega))$, so by virtue of the trace map, $\bar u_{\mid\Gamma}=\bar\delta_t\in L^2([0,\infty);H^{1/2}(\Gamma)) \hookrightarrow L^2([0,\infty);L^2(\Gamma))$.
By the definition of weak solution for the Robin problem, the map $t\mapsto\|\bar u(t)\|^2_{L^2(\Gamma)}$ is continuous. 
So as $\bar\delta_t(t)=-\bar u(t)$ in $L^2(\Gamma)$, the auxiliary term $\|\bar\delta(t)\|^2_{L^2(\Gamma)}$ is bounded, uniformly in $t$ on compact intervals. 
Since the global attractor $\mathcal{A}_0$ is bounded in $\mathcal{B}_0$, the maps $t\mapsto\|\bar\delta(t)\|_{L^2(\Gamma)}$ and $t\mapsto\|\bar\delta_t(t)\|_{L^2(\Gamma)}$ are bounded, uniformly in $t$ and $\ep\in(0,1]$, by a the radius of $\mathcal{B}_0$, $R_0$. 
Thus, there is a constant $C=C(R_0)>0$, independent of $\ep$, such that, for all $t\in [0,T]$, 
\begin{align}
2\ep|\langle \bar\delta_t+\bar\delta,w_t \rangle_{L^2(\Gamma)}| & \le \ep^2\cdot C(R_0) + \|w_t\|^2_{L^2(\Gamma)}.  \label{robust-4}
\end{align}
Now estimate the remaining product using the local Lipschitz continuity of $f$, 
\begin{equation}\label{robust-5}
2|\langle f(u)-f(\bar u),z_t \rangle| \leq C_\Omega\|z\|^2_1 + \|z_t\|^2,
\end{equation}
where $C_\Omega$ is due to the continuous embedding $H^1(\Omega)\hookrightarrow L^6(\Omega).$
Combining (\ref{robust-3}) (after omitting the two positive terms $2\|z_t\|^2 + 2\ep\|w_t\|^2_{L^2(\Gamma)}$), (\ref{robust-4}), and (\ref{robust-5}) after adding the terms $\ep\|w\|^2_{L^2(\Gamma)} + \|w_t\|^2_{L^2(\Gamma)}$ the the right hand side, produces the differential inequality, which holds for almost all $t\in[0,T]$,
\begin{align}
\frac{\diff}{\diff t} & \left\{ \|z\|^2_1 + \|z_t\|^2 + \ep\|w\|^2_{L^2(\Gamma)} + \|w_t\|^2_{L^2(\Gamma)} \right\}   \notag \\
& \leq \ep^2\cdot C(R_0) + C_\Omega(\|z\|^2_1 + \|z_t\|^2 + \ep\|w\|^2_{L^2(\Gamma)} + \|w_t\|^2_{L^2(\Gamma)}).  \notag
\end{align}
Integrating with respect to $t$ in the compact interval $[0,T]$ yields,
\begin{align}
& \|z(t)\|^2_1 + \|z_t(t)\|^2 + \ep\|w(t)\|^2_{L^2(\Gamma)} + \|w_t(t)\|^2_{L^2(\Gamma)}   \notag \\
& \leq e^{C_\Omega T}(\|z(0)\|^2_1 + \|z_t(0)\|^2 + \ep\|w(0)\|^2_{L^2(\Gamma)} + \|w_t(0)\|^2_{L^2(\Gamma)})  \notag \\ 
& + \ep^2\cdot C(R_0) \left( e^{C_\Omega T}-1 \right).  \label{robust-6}
\end{align}
Because of the initial conditions, $z(0)=z_t(0)=0$,
\[
\ep\|w(0)\|^2_{L^2(\Gamma)} = \ep\|\delta_0\|^2_{L^2(\Gamma)} \quad \text{and}\quad \|w_t(0)\|^2_{L^2(\Gamma)}=\ep\|\delta_1+u_0\|^2_{L^2(\Gamma)}. 
\] 
Since the initial condition $\zeta_0=(u_0,u_1,\delta_0,\delta_1)$ belongs to the bounded attractor $\mathcal{A}_\ep$, $\|w_t(0)\|_{L^2(\Gamma)}\le \ep\cdot R_1$. 
Thus, inequality (\ref{robust-6}) can be written as
\begin{equation}\label{final-1}
\|z(t)\|^2_1 + \|z_t(t)\|^2 + \ep\|w(t)\|^2_{L^2(\Gamma)} + \|w_t(t)\|^2_{L^2(\Gamma)} \le \ep\cdot C(R_0,R_1,\Omega).
\end{equation}

To show (\ref{robust-7}) as claimed, recall that
\begin{align}
\|S_\ep(t)\zeta_0-\mathcal{L}S_0(t)\Pi\zeta_0\|^2_{\mathcal{H}_\ep} &  \notag \\
= \|z(t)\|^2_1 & + \|z_t(t)\|^2 + \ep\|\delta(t)\|^2_{L^2(\Gamma)} + \|\delta_t(t)+\bar u(t)\|^2_{L^2(\Gamma)}.  \label{final-2}
\end{align}
The last two terms are estimated above by 
\begin{equation}  \label{final-3}
\ep\|\delta(t)\|^2_{L^2(\Gamma)} = \ep\|\delta(t)-\bar\delta(t)+\bar\delta(t)\|^2_{L^2(\Gamma)} \leq 2\ep\|w(t)\|^2_{L^2(\Gamma)} + 2\ep\|\bar\delta(t)\|^2_{L^2(\Gamma)},
\end{equation}
and
\begin{align}
\|\delta_t(t)+\bar u(t)\|^2_{L^2(\Gamma)} & = \|\delta_t(t)-\bar\delta_t(t)+\bar\delta_t(t)+\bar u(t)\|^2_{L^2(\Gamma)}   \notag \\
& \leq 2\|w_t(t)\|^2_{L^2(\Gamma)} + 2\|\bar\delta_t(t)+\bar u(t)\|^2_{L^2(\Gamma)}.  \label{final-4}
\end{align}
It follows from (\ref{Robin-system-1}) that, on $\Gamma$, $\bar\delta_t(t)=-\bar u(t)$ so $\|\bar\delta_t(t) + \bar u(t)\|^2_{L^2(\Gamma)}\equiv 0$. Combining inequalities (\ref{final-2}), (\ref{final-3}), and (\ref{final-4}) with (\ref{final-1}), and recalling the bound on $\ep\|\delta(t)\|^2_{L^2(\Gamma)}$, we arrive at
\[
\|S_\ep(t)\zeta_0-\mathcal{L}S_0(t)\Pi\zeta_0\|^2_{\mathcal{H}_\ep} \le \ep\cdot C(R_0,R_1,\Omega).
\]
This establishes equation (\ref{robust-7}).
\end{proof}

The proof of the main result now follows from a direct application of Theorem \ref{t:robustness} to the model problem. 
Theorem \ref{t:robustness} may actually be applied to any family of sets that may be described by (\ref{set-family-0})-(\ref{set-family-2}), which includes the family of global attractors found above. 
However, since the bound on the exponential attractors is {\em{not}} uniform in $\ep$, this upper-semicontinuity result cannot be applied to the corresponding family of exponential attractors.

\begin{proof}[Proof of Theorem \ref{upper}]
Because of the invariance of the global attractors, setting $W_0=\mathcal{A}_0$ in equation (\ref{set-family-0}) and setting $W_\ep=\mathcal{A}_\ep$ in equation (\ref{set-family-1}) produces, respectively, $\mathcal{U}_0=\mathcal{A}_0$ and $\mathcal{U}_\ep=\mathcal{A}_\ep$.
\end{proof}

We conclude this section with some remarks and final observations.

\begin{remark}
One can see from (\ref{final-3}) that the continuity result in Theorem \ref{upper} depends on the topology of $\mathcal{H}_\ep$.
Conversely, the result in \cite{Hale&Raugel88} holds in the corresponding topology with $\ep=1$ fixed.
But recall, the argument made in their work requires more regularity from the solutions.
\end{remark}

Upon reflection with Lemma \ref{compare}, we also obtain the following explicit estimate for the difference of two trajectories, originating from similar initial data, whereby we project Problem (A) onto the phase space for Problem (R).
The result shows that the first two components of the solution to Problem (A) converge to the solution to Problem (R) as $\ep\to 0$, starting with some fixed initial data.

\begin{lemma}
Let $T>0$. 
There is a constant $\Lambda_1>0$, independent of $\ep$, such that, for all $t\in[0,T]$ and for all $\zeta_0\in \mathcal{A}_\ep$,
\begin{equation}  \label{robust-7-70}
\|\Pi S_\ep(t)\zeta_0 - S_0(t)\Pi\zeta_0\|_{\mathcal{H}_0} \le \Lambda_2\sqrt{\ep}.
\end{equation}
\end{lemma}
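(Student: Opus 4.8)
The plan is to reuse almost verbatim the computation already carried out in the proof of Lemma \ref{compare}. Write $u$ for the weak solution of Problem (A) with initial data $\zeta_0=(u_0,u_1,\delta_0,\delta_1)\in\mathcal{A}_\ep$ and $\bar u$ for the weak solution of Problem (R) with initial data $\Pi\zeta_0=(u_0,u_1)$, and set $z=u-\bar u$ exactly as before. By the definition of the projection $\Pi$,
\[
\Pi S_\ep(t)\zeta_0 - S_0(t)\Pi\zeta_0 = (z(t),z_t(t)),
\]
so that, recalling the definition of the $\mathcal{H}_0$-norm,
\[
\|\Pi S_\ep(t)\zeta_0 - S_0(t)\Pi\zeta_0\|^2_{\mathcal{H}_0} = \|z(t)\|^2_1 + \|z(t)\|^2_{L^2(\Gamma)} + \|z_t(t)\|^2.
\]

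First I would observe that the two interior contributions are already controlled. The difference $z$ solves the same system (\ref{z-difference}) treated in Lemma \ref{compare}, and the estimate (\ref{final-1}) derived there gives directly
\[
\|z(t)\|^2_1 + \|z_t(t)\|^2 \le \ep\cdot C(R_0,R_1,\Omega)
\]
for all $t\in[0,T]$, with a constant independent of $\ep$ (here we simply discard the two nonnegative boundary terms $\ep\|w(t)\|^2_{L^2(\Gamma)}+\|w_t(t)\|^2_{L^2(\Gamma)}$ appearing on the left of (\ref{final-1})).

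The only quantity not already bounded is the boundary term $\|z(t)\|^2_{L^2(\Gamma)}$, which is present in the $\mathcal{H}_0$-norm but absent from the $\mathcal{H}_\ep$-norm. I would recover it from the interior bound by the trace embedding $H^1(\Omega)\hookrightarrow H^{1/2}(\Gamma)\hookrightarrow L^2(\Gamma)$ recalled in Section \ref{s:Robin}, which supplies a constant $C_{\mathrm{tr}}>0$, independent of $\ep$, with $\|z(t)\|_{L^2(\Gamma)}\le C_{\mathrm{tr}}\|z(t)\|_1$. Hence $\|z(t)\|^2_{L^2(\Gamma)}\le C_{\mathrm{tr}}^2\|z(t)\|^2_1\le \ep\cdot C_{\mathrm{tr}}^2\,C(R_0,R_1,\Omega)$.

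Combining the last three displays yields
\[
\|\Pi S_\ep(t)\zeta_0 - S_0(t)\Pi\zeta_0\|^2_{\mathcal{H}_0} \le (1+C_{\mathrm{tr}}^2)\,\ep\cdot C(R_0,R_1,\Omega),
\]
and taking square roots gives (\ref{robust-7-70}) with $\Lambda_2:=\sqrt{(1+C_{\mathrm{tr}}^2)\,C(R_0,R_1,\Omega)}$. Since the whole argument rests on the previously established estimate (\ref{final-1}) and the single elementary trace bound is the only new ingredient, there is no genuine obstacle here; the one point requiring care is precisely to recognize that the extra boundary norm $\|z\|^2_{L^2(\Gamma)}$ carried by $\mathcal{H}_0$ does not appear in the $\mathcal{H}_\ep$-estimate and must be recaptured from the interior $H^1(\Omega)$-bound through the trace map.
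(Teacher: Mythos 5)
Your proof is correct, and it reaches the estimate by a genuinely different (and shorter) route than the paper's. You obtain (\ref{robust-7-70}) as a direct corollary of Lemma \ref{compare}: discarding the nonnegative boundary terms in (\ref{final-1}) gives $\|z(t)\|^2_1+\|z_t(t)\|^2\le \ep\, C(R_0,R_1,\Omega)$ on $[0,T]$, and you recover the extra term $\|z\|^2_{L^2(\Gamma)}$ carried by the $\mathcal{H}_0$-norm (but absent from $\mathcal{H}_\ep$) via the trace embedding $H^1(\Omega)\hookrightarrow H^{1/2}(\Gamma)\hookrightarrow L^2(\Gamma)$, whose constant is purely geometric and hence $\ep$-independent; you rightly identify this as the only point requiring care, and your hypotheses match those of Lemma \ref{compare} (same $\zeta_0\in\mathcal{A}_\ep$, same initial conditions (\ref{acoustic-initial-conditions2}), same difference $z=u-\bar u$). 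The paper instead runs a fresh argument: it integrates the boundary equations (\ref{proj-A})$_3$--(\ref{proj-A})$_5$ of the \emph{projected} Problem (A), using the prepared initial data (\ref{acoustic-initial-conditions2}) to conclude that the projected solution satisfies an inhomogeneous Robin condition $\partial_{\mathbf{n}}u+u=\phi$ with $\phi=\ep(\delta_1+u_0)$, so that $z$ solves the Robin system (\ref{z-difference-70}) with $O(\ep)$ boundary forcing, and then closes with a continuous-dependence/Gr\"onwall estimate in $\mathcal{H}_0$ (cf. (\ref{proj-p-1})). Your approach buys economy — no new energy estimate is needed, and all $\ep$-bookkeeping is inherited from the already-verified (\ref{final-1}) — while the paper's approach buys structural insight: it makes explicit that, with the initial data (\ref{acoustic-initial-conditions2}), the projection of Problem (A) is literally a Robin problem with $O(\ep)$ boundary data, which substantiates the interpretation in Remark \ref{key} independently of the $\mathcal{H}_\ep$-comparison of Lemma \ref{compare}. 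Both arguments are valid; yours is arguably the cleaner derivation given that Lemma \ref{compare} precedes this statement in the paper.
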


\begin{proof}
Let us here consider the difference between Problem (R) and Problem (A) whereby this time we project Problem (A) onto the phase space for Problem (R).
The {\em{projected Problem (A)}} is obtained from equations (\ref{damped-wave-equation})-(\ref{Robin-initial-conditions}) and (\ref{acoustic-boundary}), and (\ref{acoustic-initial-conditions2}),
\begin{equation}  \label{proj-A}
\left\{\begin{array}{ll} u_{tt} + u_t - \Delta u + u + f(u) = 0 & \text{in} \ (0,\infty)\times\Omega \\
u(0,\cdot)=u_0, \ u_t(0,\cdot)=u_1 & \text{on} \ \{0\}\times\Omega \\ 
\delta_{tt} = -u_t & \text{on} \ (0,\infty)\times\Gamma \\
\delta_t = \partial_{\bf{n}}u & \text{on} \ (0,\infty)\times\Gamma \\
\delta_t(0,\cdot)=\ep\delta_1-(1-\ep)u_0 & \text{on} \ \{0\}\times\Gamma. \end{array}\right.
\end{equation}
The associated solution operator is denoted $\Pi S_\ep(t).$
(It is important to note that the initial data for the projected Problem (A) is not projected, only the corresponding solution is.)
Observe, the final three equations, (\ref{proj-A})$_3$-(\ref{proj-A})$_5$ may be reduced; they imply that there is an ``arbitrary constant'' $\phi=\phi(x)$ in which there holds on $\Gamma$,
\[
\partial_{\bf{n}}u+u=\phi(x) \quad \text{where} \quad \phi=\ep(\delta_1+u_0).
\]
Let $u$ denote the weak solution of Problem (A) corresponding to the initial data $\zeta_0=(u_0,u_1,\delta_0,\delta_1)\in \mathcal{A}_\ep$, and let $\bar u$ denote the weak solution of Problem (R) corresponding to the initial data $\Pi\zeta_0=(u_0,u_1)\in \mathcal{A}_0$. 
Again, rewrite the Robin boundary condition as in (\ref{Robin-system-1}), letting $z=u-\bar u$ and $w=\delta-\bar\delta$, we find, this time, $z$ and $w$ satisfy the system,
\begin{equation}  \label{z-difference-70}
\left\{\begin{array}{ll} z_{tt} + z_t - \Delta z + z + f(u) - f(\bar u) = 0 & \text{in} \ (0,\infty)\times\Omega \\
z(0,\cdot)=0, \ z_t(0,\cdot)=0 & \text{on} \ \{0\}\times\Omega \\ 
\partial_{\bf{n}}z + z = \ep\phi(x) & \text{on} \ (0,\infty)\times\Gamma. \end{array}\right.
\end{equation}
At this point it is easy to check that, by virtue of the continuous dependence argument for Problem (R) (cf. (\ref{continuous-dependence})), for almost all $t\ge0$, there holds, 
\begin{equation}  \label{proj-p-1}
\frac{\diff}{\diff t} \|\bar\varphi\|^2_{\mathcal{H}_0} \le C \|\bar\varphi\|^2_{\mathcal{H}_0} + \ep\|\phi(x)\|^2.
\end{equation}
Thus, we find that there is a constant $\Lambda_2>0$, such that for all $t\in[0,T]$, (\ref{robust-7-70}) holds.
\end{proof}

\begin{remark}
At this point it is worth contrasting the above results with what we know of the following related perturbation problem where, for $\ep\in(0,1],$
\begin{equation}  \label{abc2}
\left\{ \begin{array}{ll} 
\ep\delta_{tt}+\delta_t+\delta=-u_t \\ 
\delta_t=\partial_{\bf{n}}u & \quad\text{on}\quad (0,\infty)\times\Gamma. \end{array} \right.
\end{equation}
The limit problem is $\partial_{\bf{n}}u+\delta=-u_t$, which could be compared with the dynamic boundary condition 
\begin{equation}  \label{dybc}
\partial_{\bf{n}}\bar u+\bar u_t=0 \quad\text{on}\quad (0,\infty)\times\Gamma,
\end{equation}
as found in \cite[Equation (1.4)]{Gal12}. 
Writing (\ref{dybc}) as the system 
\begin{equation}  \label{dybc2}
\left\{ \begin{array}{ll} 
\bar\delta_t=-\bar u_t \\ 
\bar\delta_t=\partial_{\bf{n}}\bar u & \quad\text{on}\quad (0,\infty)\times\Gamma. \end{array} \right.
\end{equation}
allows us to write the difference when we take the initial conditions
\[
\zeta(0)=\zeta_0=(u_0,u_1,\ep\delta_0,\delta_1)\in\mathcal{H}_\ep,
\]
and
\[
\mathcal{L}\Pi\zeta_0=(u_0,u_1,\ep u_0,-u_1)
\]
into account.
Letting $z=u-\bar u$ and $w=\delta-\bar\delta$ as above, we find
\begin{equation}  \label{z-difference2}
\left\{\begin{array}{ll} z_{tt} + z_t - \Delta z + z + f(u) - f(\bar u) = 0 & \text{in} \quad (0,\infty)\times\Omega \\
z(0,\cdot)=0, \ z_t(0,\cdot)=0 & \text{at} \quad \{0\}\times\Omega \\ 
\ep w_{tt} + w_t + w= -z_t - \bar\delta - \ep\bar\delta_{tt} & \text{on} \quad (0,\infty)\times\Gamma \\
w_t = \partial_{\bf{n}}z & \text{on} \quad (0,\infty)\times\Gamma \\
w(0,\cdot)=\ep(\delta_0-u_0), \ \ep w_t(0,\cdot)=\ep(\delta_1+u_1) & \text{at} \quad \{0\}\times\Gamma. \end{array}\right.
\end{equation}
The first problem that arrises concerns the bound on the term $\bar\delta_{tt}$, uniform in $\ep$ and $t$ on compact intervals.
Since $\bar\delta=-\bar u_{tt}$ by (\ref{dybc2}), such a bound can be obtained from arguments similar to those in \cite[Proof of Lemma 3.16]{Gal&Shomberg15}; however, this in turn necessitates the regularity assumptions such as (\ref{f-assumption-2}), (\ref{f-reg-ass-2})-(\ref{f-reg-ass-3}) for Problem (R). 
Also, another concern comes from the presence of the new term $\bar\delta$ in the right-hand side of (\ref{z-difference2})$_3$.
The function $\bar\delta$ is determined from (\ref{dybc2}) (that is, the transport-type equation (\ref{dybc})) and the initial condition $\bar\delta(0,\cdot)=\ep u_0$.
The $\ep$ present in this initial condition insures we obtain a control like that obtained in (\ref{robust-7}).
This model will be examined in a subsequent article.
\end{remark}

\section{Conclusions}

In this article, an upper-semicontinous family of global attractors was constructed for a damped semilinear wave equation possessing a singular perturbation parameter occurring in prescribed acoustic boundary condition. 
The result is obtained with a rather restrictive growth condition on the nonlinear term, and as a result, the global attractors are arrived at after obtaining an asymptotic compactness property of the semiflows.
With the perturbation parameter occurring in the boundary conditions, the semiflow corresponding to the limit problem, Problem (R), is Lipschitz continuous on its phase space. 
This result is utilized at a critical step in the proof of the upper-semicontinuity of a generic family of sets which was applied to the global attractors. 
Another crucial property in the framework of this problem is that the lift map does not require any regularity. 
For example, the global attractor $\mathcal{A}_0$ for Problem (R) is in $H^1(\Omega)\times L^2(\Omega)$ whereby its lift is in $H^1(\Omega)\times L^2(\Omega)\times L^2(\Gamma)\times L^2(\Gamma)$; no additional regularity of the attractor is required in order to obtain the upper-semicontinuity result. 
Recall, this is certainly not the case for problems with a perturbation of hyperbolic-relaxation type.

In comparison to the upper-semicontinuity result for the global attractors in \cite{Hale&Raugel88}, the perturbation parameter there occurs as a hyperbolic-relaxation term (see the motivation in \S1). 
The global attractors for the parabolic problem must be in (at least) $H^2(\Omega)\cap H^1_0(\Omega)$ in order for the lift to be well-defined in $H^1(\Omega)\times L^2(\Omega)$. 
Such a regularity result requires stronger assumptions than those initially used here on the nonlinear term. 
In addition, while the parabolic semiflow $S_0$ is Lipschitz continuous in $L^2(\Omega)$, it is not necessarily so in $H^1_0(\Omega)$. 
Because of this, another approach is needed when investigating the continuity of the family of global attractors in $H^1(\Omega)\times L^2(\Omega)$.

The global attractors $\mathcal{A}_\ep$ obtained under the restrictive growth assumptions were shown to be bounded, uniformly with respect to the perturbation parameter $\ep$, in the phase space $\mathcal{H}_\ep.$
With assumptions sufficient to allow the existence of both weak and strong solutions to both Problem (R) and Problem (A), we showed that the family of global attractors, $\mathcal{A}_{\ep}$, $\ep\in[0,1],$ possess optimal regularity; each is bounded in the more regular phase space $\mathcal{D}_\ep.$
However, here, the bound is no longer independent of $\ep.$
With this further regularity on the nonlinear term, we showed that the semiflows $S_\ep$ admit a decomposition into exponentially decaying to zero and uniformly precompact parts.
Since fractional-powers of the Laplacian are undefined for Problem (A), we resorted to other $H^2$-regularity methods.
In a natural way, these results allowed us to show the existence of a bounded absorbing set $\mathcal{B}^1_\ep$ in $\mathcal{D}_\ep;$ a first step to proving the existence of an exponential attractor.
Indeed, further properties of $S_\ep$ are shown: Lipschitz continuity on $[T,2T]\times\mathcal{B}^1_\ep$, for some $0<T<\infty,$ and a squeezing property.
The existence of a family of exponential attractors (also upper-semicontinuous) means the corresponding family of global attractors (for each $\ep\in[0,1]$) possesses finite fractal dimension.
Through various estimates which depend on $\ep$ in a crucial way, the radius of the absorbing set $\mathcal{B}^1_\ep$ depends on $\ep$; moreover, the fractal dimension of $\mathcal{A}_\ep$ and $\mathcal{M}_\ep$ is not necessarily uniform in $\ep.$

Two important practical results stemming from this work are the following: First, the nature of the upper-semicontinuity result of the attractors, as presented here, means Problem (A) is a ``relaxation'' of Problem (R).
Precisely, in light of Lemma \ref{compare}, Problem (A) can be interpreted as an approximation of Problem (R), and, in this case, the difference between corresponding trajectories, on compact time intervals, is controlled explicitly in terms of $\sqrt{\ep}$.
Secondly, the finite dimensionality of the attractors means the infinite-dimensional dynamics inherent in the systems associated with Problem (R) and Problem (A) can be reduced to a finite-dimensional system of ODEs.

\appendix
\section{}

In this section we include some useful results utilized by Problem (R) and Problem (A). 
The first result can be found in \cite[Lemma 2.7] {Belleri&Pata01}.

\begin{proposition}  \label{t:diff-ineq-1}
Let $X$ be an arbitrary Banach space, and $Z\subset C([0,\infty);X)$. 
Suppose that there is a functional $E:X\rightarrow\mathbb{R}$ such that, for every $z\in Z$,
\begin{equation}\label{zupper-1}
\sup_{t\geq 0} E(z(t))\geq -r \ \text{and} \ E(z(0))\leq R
\end{equation}
for some $r,R\geq 0$. 
In addition, assume that the map $t\mapsto E(z(t))$ is $C^1([0,\infty))$ for every $z\in Z$ and that for almost all $t\geq 0$, the differential inequality holds
\begin{equation}  \label{id-007}
\frac{\diff}{\diff t} E(z(t)) + m\|z(t)\|^2_X \leq C,
\end{equation}
for some $m>0$, $C\geq 0$, both independent of $z\in Z$. Then, for every $\iota>0$, there exists $t_0\geq 0$, depending on $R$ and $\iota$, such that for every $z\in Z$ and for all $t\geq t_0$,
\begin{equation}\label{wer-2}
E(z(t))\leq \sup_{\xi\in X}\{ E(\xi):m\|\xi\|^2_X\leq C+\iota \}.
\end{equation}
Furthermore, $t_0=(r+R)/\iota$.
\end{proposition}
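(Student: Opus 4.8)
The plan is to exhibit the one-parameter sublevel value
\[
K_\iota := \sup_{\xi\in X}\{E(\xi) : m\|\xi\|^2_X \leq C+\iota\}
\]
as a threshold that every trajectory $z\in Z$ must cross before time $t_0=(r+R)/\iota$ and can never re-cross upward. The first step is a pointwise observation: if $E(z(t))>K_\iota$ at some instant $t$, then necessarily $m\|z(t)\|^2_X>C+\iota$, since otherwise $z(t)$ would itself be an admissible competitor $\xi$ in the supremum defining $K_\iota$, forcing $E(z(t))\leq K_\iota$. Feeding this into the differential inequality (\ref{id-007}) gives, for almost every such $t$,
\[
\frac{\diff}{\diff t}E(z(t)) \leq C - m\|z(t)\|^2_X < C-(C+\iota) = -\iota,
\]
so $E(z(\cdot))$ strictly decreases, at rate at least $\iota$, on every interval where it lies above $K_\iota$.

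Next I would combine this decay with the bounds in (\ref{zupper-1}) in two steps. For the \emph{entry} step, suppose for contradiction that $E(z(t))>K_\iota$ for all $t\in[0,t_0]$. Since $t\mapsto E(z(t))$ is $C^1$, I may integrate the preceding strict inequality over $[0,t_0]$ to obtain $E(z(t_0)) < E(z(0)) - \iota t_0 \leq R-\iota t_0 = R-(r+R)=-r$, contradicting the lower bound $E(z(t))\geq -r$ furnished by (\ref{zupper-1}). Hence there exists $s^\ast\in[0,t_0]$ with $E(z(s^\ast))\leq K_\iota$. For the \emph{trapping} step, I claim the inequality $E(z(t))\leq K_\iota$ then persists for all $t\geq s^\ast$: if some later $t_1$ had $E(z(t_1))>K_\iota$, continuity would produce a last instant $s^\sharp\in[s^\ast,t_1)$ with $E(z(s^\sharp))=K_\iota$ and $E(z(\cdot))>K_\iota$ on $(s^\sharp,t_1]$, but on that interval the derivative is strictly negative, so $E(z(t_1))<E(z(s^\sharp))=K_\iota$, a contradiction. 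Combining the two steps yields $E(z(t))\leq K_\iota$ for every $t\geq t_0\geq s^\ast$, which is exactly (\ref{wer-2}) with $t_0=(r+R)/\iota$.

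The main obstacle, and the point demanding the most care, is the interplay between the almost-everywhere validity of (\ref{id-007}) and the pointwise/continuity reasoning: the integration in the entry step is legitimate precisely because $t\mapsto E(z(t))$ is assumed $C^1$ (hence absolutely continuous), which transfers the a.e. bound to the integral, and because the rate $-\iota$ is strict on a set of positive measure, one gets the strict inequality $E(z(t_0))<-r$ needed to contradict $E(z(t_0))\geq -r$. The trapping argument similarly rests on continuity at the crossing instant together with the sign of the derivative just above the threshold. It is worth stressing that the quantitative entry time $t_0=(r+R)/\iota$ genuinely requires a lower bound $E(z(t))\geq -r$ along the trajectory, which is the content I would extract from (\ref{zupper-1}); the rate $\iota$ and the initial bound $R$ then pin down $t_0$ exactly.
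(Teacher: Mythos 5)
Your entry-plus-trapping argument is the standard proof of this type of dissipation lemma, and it is worth noting that the paper itself offers no proof at all: it simply cites \cite[Lemma 2.7]{Belleri&Pata01}, so yours is the only self-contained argument on the table. Your trapping step is correct as written, and your care with the interplay between the a.e.\ inequality (\ref{id-007}) and the assumed $C^1$ regularity (absolute continuity justifying the integration, strictness of the rate on a full-measure set giving the strict inequality) is exactly right.

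The one genuine issue is in the entry step: you invoke ``the lower bound $E(z(t))\geq -r$ furnished by (\ref{zupper-1})'', but (\ref{zupper-1}) as literally stated only asserts $\sup_{t\geq0}E(z(t))\geq -r$, which is \emph{not} a pointwise bound. Along a trajectory whose energy decreases monotonically the supremum is attained at $t=0$, where $E(z(0))$ may lie anywhere in $[-r,R]$, so deducing $E(z(t_0))<-r$ produces no contradiction with the sup condition. In fact, under the literal reading the proposition is false. Sketch of a counterexample: on $X=\mathbb{R}$ take $m=1$, $C=0$, $\iota=1$, $E(\xi)=g(|\xi|)$ with $g$ smooth and increasing, $g\leq-10$ on $[0,1]$ with $g(1)=-10$, and $g(\rho)=-10+8(\rho-1)$ for $\rho$ above $1$; let $Z$ consist of the single trajectory with $|z(t)|=\rho(t)$ solving $g'(\rho)\rho'=-\rho^{2}$, $\rho(0)=9/4$, so that $\frac{\diff}{\diff t}E(z(t))=-|z(t)|^{2}$ exactly. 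With $r=0$, $R=1$ all hypotheses hold ($\sup_{t\geq0}E(z(t))=E(z(0))=0\geq-r$ and $E(z(0))\leq R$), yet $\rho(1)\approx 1.76$ gives $E(z(t_0))\approx-3.9>-10=K_\iota$ at $t_0=(r+R)/\iota=1$: the steep profile of $g$ decouples the energy drop from the elapsed time. So the hypothesis must be read pointwise---$E(z(t))\geq-r$ for all $t\geq0$, i.e.\ $\inf$ rather than $\sup$---which is precisely what is available in the paper's application, where (\ref{Robin-functional-1}) yields $E_0(\varphi)\geq C_1\|\varphi\|^2_{\mathcal{H}_0}-\kappa_f\geq-\kappa_f$ for \emph{every} state; an alternative repair is to assume $Z$ invariant under time translation (true for trajectory sets of semiflows), since applying (\ref{zupper-1}) to the translates $z(\cdot+\tau)$ gives $\sup_{t\geq\tau}E(z(t))\geq-r$ for every $\tau$, while your bad case forces $E(z(t))\leq E(z(t_0))<-r$ for all $t\geq t_0$. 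Under either reading your proof is complete and gives the stated $t_0=(r+R)/\iota$. One last cosmetic point: when $r=R=0$ one has $t_0=0$ and your integration over $[0,t_0]$ is vacuous; there, if $E(z(0))>K_\iota$, the strict decay on any short interval $[0,\tau]$ forces $E(z(\tau))<E(z(0))\leq R=-r$, again contradicting the pointwise bound, so entry occurs at $s^{\ast}=0$ and the trapping step finishes as before.
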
 

The following result is the so-called transitivity property of exponential attraction from \cite[Theorem 5.1]{FGMZ04}.

\begin{proposition}  \label{t:exp-attr}
Let $(\mathcal{X},d)$ be a metric space and let $S_t$ be a semigroup acting on this space such that 
\[
d(S_t m_1,S_t m_2) \leq C e^{Kt} d(m_1,m_2),
\]
for appropriate constants $C$ and $K$. Assume that there exists three subsets $M_1$,$M_2$,$M_3\subset\mathcal{X}$ such that 
\[
{\rm{dist}}_\mathcal{X}(S_t M_1,M_2) \leq C_1 e^{-\alpha_1 t} \quad\text{and}\quad{\rm{dist}}_\mathcal{X}(S_t M_2,M_3) \leq C_2 e^{-\alpha_2 t}.
\]
Then 
\[
{\rm{dist}}_\mathcal{X}(S_t M_1,M_3) \leq C' e^{-\alpha' t},
\]
where $C'=CC_1+C_2$ and $\alpha'=\frac{\alpha_1\alpha_1}{K+\alpha_1+\alpha_2}$.
\end{proposition}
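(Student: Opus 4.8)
The plan is to prove the estimate by a time-splitting argument. To reach $M_3$ from a point $S_t m_1$, I would first flow for an intermediate time $s$ so that $S_s m_1$ lands near $M_2$, then transport this proximity forward over the remaining time $t-s$ using the Lipschitz bound on $S_t$, and finally invoke the attraction of $M_2$ to $M_3$ over that remaining time. The single free parameter $s$ will be chosen to balance the two competing attraction rates against the Lipschitz amplification factor $e^{K(\cdot)}$.

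Concretely, I would fix $m_1\in M_1$, $t\geq 0$, and a splitting time $s\in[0,t]$ to be optimized. Given $\eta>0$, the first hypothesis $\dist_{\mathcal{X}}(S_s M_1,M_2)\leq C_1 e^{-\alpha_1 s}$ furnishes $m_2\in M_2$ with $d(S_s m_1,m_2)\leq C_1 e^{-\alpha_1 s}+\eta$. Applying the Lipschitz bound with flow time $t-s$ gives $d(S_t m_1,S_{t-s}m_2)\leq C e^{K(t-s)}(C_1 e^{-\alpha_1 s}+\eta)$. The second hypothesis $\dist_{\mathcal{X}}(S_{t-s}M_2,M_3)\leq C_2 e^{-\alpha_2(t-s)}$ then supplies $m_3\in M_3$ with $d(S_{t-s}m_2,m_3)\leq C_2 e^{-\alpha_2(t-s)}+\eta$. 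The triangle inequality combines these into
\[
d(S_t m_1,m_3)\leq C C_1\, e^{K(t-s)-\alpha_1 s}+C_2\, e^{-\alpha_2(t-s)}+\bigl(Ce^{K(t-s)}+1\bigr)\eta.
\]

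The heart of the argument is the choice of $s$. Writing $s=\lambda t$ with $\lambda\in[0,1]$, the two exponents become $[K(1-\lambda)-\alpha_1\lambda]t$ and $-\alpha_2(1-\lambda)t$; equating them forces $\lambda=(K+\alpha_2)/(K+\alpha_1+\alpha_2)$, hence $1-\lambda=\alpha_1/(K+\alpha_1+\alpha_2)$, and both exponents collapse to $-\alpha' t$ with $\alpha'=\alpha_1\alpha_2/(K+\alpha_1+\alpha_2)$ (the value recorded in the statement, up to the evident typographical $\alpha_1\alpha_1$). With this $s$, the displayed inequality reads $d(S_t m_1,m_3)\leq(CC_1+C_2)e^{-\alpha' t}+(Ce^{K(t-s)}+1)\eta$. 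Since the infimum over $M_3$ of $d(S_t m_1,\cdot)$ does not depend on $\eta$, letting $\eta\to 0$ yields $\inf_{m\in M_3} d(S_t m_1,m)\leq(CC_1+C_2)e^{-\alpha' t}$; taking the supremum over $m_1\in M_1$ then delivers $\dist_{\mathcal{X}}(S_t M_1,M_3)\leq C' e^{-\alpha' t}$ with $C'=CC_1+C_2$.

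The only genuine obstacle is the optimization step, which must simultaneously defeat the Lipschitz blow-up $e^{K(t-s)}$ and extract one clean rate from two distinct attraction rates; the affine substitution $s=\lambda t$ reduces this to a one-line linear computation. The remaining care is purely bookkeeping: because $\dist$ is a sup--inf, the intermediate points $m_2,m_3$ realize the semidistances only up to an arbitrary $\eta>0$, which is harmless since it is removed by an infimum before any supremum is taken. Finally, one checks $\lambda\in(0,1)$ whenever $\alpha_1>0$ and $K\geq 0$, so that $s=\lambda t\in[0,t]$ is a legitimate splitting time.
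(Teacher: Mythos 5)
Your proof is correct: the paper itself gives no proof of this proposition but cites it from \cite[Theorem 5.1]{FGMZ04}, and your time-splitting argument---picking an approximating $m_2\in M_2$ at time $s$, pushing it forward with the Lipschitz bound, appealing to the attraction of $M_2$ to $M_3$ over $t-s$, and choosing $s=\lambda t$ with $\lambda=(K+\alpha_2)/(K+\alpha_1+\alpha_2)$ to equate the exponents before removing the $\eta$-slack and taking the supremum---is exactly the standard proof given in that reference. You also correctly note that the exponent printed in the statement, $\alpha'=\frac{\alpha_1\alpha_1}{K+\alpha_1+\alpha_2}$, is a typographical error for $\alpha'=\frac{\alpha_1\alpha_2}{K+\alpha_1+\alpha_2}$, which is what your computation yields.
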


\section*{Acknowledgments}

The author gratefully acknowledges Sergio Frigeri for his consulting on this project.

\bigskip

\providecommand{\bysame}{\leavevmode\hbox to3em{\hrulefill}\thinspace}
\providecommand{\MR}{\relax\ifhmode\unskip\space\fi MR }
\providecommand{\MRhref}[2]{%
  \href{http://www.ams.org/mathscinet-getitem?mr=#1}{#2}
}
\providecommand{\href}[2]{#2}

\end{document}